\def\f#1{{\mathcal F}_{#1}}
\def\m#1{{\mathcal M}_{#1}}
\def\a#1{{\mathcal A}_{#1}}
\def\eps{\varepsilon}
\def\un{1\!\!\!1}
\def\eps{\varepsilon}
\def\M{{\mathcal M}}
\def\e#1{\varepsilon_#1}
\def\R{{\mathbb R}}
\def\E{{\mathbb E}}
\newcommand{\Dim}{\mathop{\rm Dim}}
\newcommand{\diminf}{\mathop{\rm dim_*}}
\newcommand{\dimsup}{\mathop{\rm dim^*}}
\newcommand{\Diminf}{\mathop{\rm Dim_*}}
\newcommand{\Dimsup}{\mathop{\rm Dim^*}}
\begin{document}

\title*{An introduction to Mandelbrot cascades}
\author{Yanick Heurteaux}
\institute{Yanick Heurteaux \at Clermont Universit\'e, Universit\'e Blaise Pascal, Laboratoire de Math\'ematiques, BP 10448, F-63000 CLERMONT-FERRAND -
CNRS, UMR 6620, Laboratoire de Math\'ematiques, F-63177 AUBIERE \email{Yanick.Heurteaux@math.univ-bpclermont.fr}
}
%
%
\maketitle

\abstract*{}

\abstract{In this course, we propose an elementary and self-contained introduction to canonical Mandelbrot random cascades. The multiplicative construction is explained and the necessary and sufficient condition of non-degeneracy is proved. Then, we discuss the problem of the existence of moments and the link with non-degeneracy. We also calculate the almost sure dimension of the measures. Finally, we give an outline on multifractal analysis of Mandelbrot cascades. This course was delivered in september 2013 during a meeting of the ``Multifractal Analysis GDR`` (GDR $\mbox{n}^{\small\mbox{o}}$ 3475 of the french CNRS).}
\section{Introduction}
\label{sec:0}
At the beginning of the seventies, Mandelbrot proposed a model of random measures based on an elementary multiplicative construction. This model, known as canonical Mandelbrot cascades,  was introduced to simulate the energy dissipation in intermittent turbulence (\cite{M74a}). In two notes (\cite{M74b} and \cite{M74c}) published in '74, Mandelbrot  described the fractal nature of the sets in which the energy is concentrated and proved or conjectured the main properties of this model. Two years later, in the fundamental paper \cite{KP76}, Kahane and Peyri\`ere proposed a complete proof of the results announced by Mandelbrot. In particular, the questions of non-degeneracy, existence of moments and dimension of the measures  were rigorously solved. 

Mandelbrot also observed that in a multiplicative cascade, the energy is distributed along a large deviations principle: this was the beginnings  of the multifractal analysis. 

Multifractal analysis developed a lot in the 80's. Frisch an Parisi observed that in the context of the fully developed  turbulence, the pointwise H\"older exponent of the dissipation of energy varies widely from point to point. They proposed in \cite{FP85} an heuristic argument, showing that the Hausdorff dimension of the level sets of a measure or a function can  be obtained  as the Legendre transform of a free energy function (which  will be called in this text  the structure function). This principle is known as {\it Multifractal Formalism}. Such a formalism was then rigorously proved by  Brown Michon and Peyri\`ere for the so called quasi-Bernoulli measures (\cite{BMP}). In particular, they highlighted the link between the multifractal formalism and the existence of auxiliary measures (known as Gibbs measures).

The problem of the multifractal analysis of Mandelbrot cascades appeared as a natural question at the end of the 80's.  Holley and Waymire were the first to obtain results in this direction. Under restrictive hypotheses, they proved in \cite{HW} that for any value of the H\"older exponent, the multifractal formalism is almost surely satisfied. The expected stronger result which says that, almost surely, for any value of the H\"older exponent, the multifractal formalism is satisfied was finally proved by Barral at the end of the $20^{\mbox{\scriptsize th}}$ century (\cite{B00}).

Let us finish this overview by saying that there exist now many generalizations of  the Mandelbrot cascades  (see for example \cite{BFP} for the description of the principal ones).

In the following pages, we want to relate the beginning of the story of canonical Mandelbrot cascades. As a preliminary, we explain the well known determinist case of binomial cascades. It allows us to describe the multiplicative principle, to introduce the most important notations and  definitions, and to show the way to calculate the dimension and to perform the multifractal analysis. Then, we introduce the canonical random Mandelbrot cascades (Theorem \ref{THEOexistence}), solve the problem of non-degeneracy  (Theorem \ref{THEOequi}) and its link with the existence of moments for the total mass of the cascade (Theorem \ref{THEOmoments}). In Section \ref{SECdim}, we prove that the Mandelbrot cascades are almost surely unidimensional and give the value of the dimension (Theorem \ref{THEOdim}). Finally, in a last section, we deal with the problem of multifractal analysis, and prove that for any value of the parameter $\beta$ the Hausdorff dimension of the level set of points with H\"older exponent $\beta$ is almost surely given by the multifractal formalism (Theorem \ref{THEOegal}). To obtain such a result, we use auxiliary cascades and we need to describe the simultaneous behavior of two cascades (Theorem \ref{THEOsimultaneous}) and to prove the existence of negative moments for the total mass (Proposition \ref{PROPnegative}).
\section{Binomial cascades}
\label{sec:1}
In order to understand the multiplicative construction principle, we begin with a very simple and classical example, known as Bernoulli product, which can be regarded as an introduction to the following. 

Let $\f n$ be the family of dyadic intervals of the
$n^{\mbox{\scriptsize th}}$ generation on $[0,1)$, $0<p<1$ and define the measure $m$ as 
follows. If $\varepsilon_1\cdots\varepsilon_n$ are integers
in $\{0,1\}$, and if
$$I_{\varepsilon_1\cdots\varepsilon_n}= \left[
\sum_{i=1}^n\frac{\varepsilon_i}{2^i},
\sum_{i=1}^n\frac{\varepsilon_i}{2^i}+\frac{1}{2^n}\right)\in\f
n$$ 
then
\begin{equation}\label{EQbernoulli}
m\left(I_{\varepsilon_1\cdots\varepsilon_n}\right)=p^{S_n}(1-p)^{n-S_n},
\quad\mbox{where}\quad S_n=\varepsilon_1+\cdots+\varepsilon_n\ .
\end{equation}
The measure $m$ is constructed using a multiplicative principle: if $I=I_{\varepsilon_1\cdots\varepsilon_n}\in \f n$ and in $I'=I_{\varepsilon_1\cdots\varepsilon_n0}$ and $I''=I_{\varepsilon_1\cdots\varepsilon_n1}$ are the two children of $I$ in $\f {n+1}$, then
$$m(I')=pm(I)\quad\mbox{and}\quad m(I'')=(1-p)m(I).$$
If $x\in[0,1)$, we can find $\e 1,\cdots,\e n,\cdots\in \{0,1\}$
uniquely determined  and such that for any $n\ge 1$, $x\in I_{\e 1\cdots\e n}$. We also denote $I_{\e 1\cdots\e n}=I_n(x)$ and we observe that
$$\frac{\log m(I_n(x))}{\log\vert I_n(x)\vert}=-\left(\frac{S_n}n\log_2 p+\left( 1-\frac{S_n}n\right)\log_2 (1-p)\right)$$
where $\vert I\vert$ is the length of the interval $I$. By the strong law of large numbers applied to the sequence $(\e n)$, we can then conclude that 
$$\lim_{n\to \infty}\frac{\log m(I_n(x))}{\log\vert I_n(x)\vert}=h(p)\quad dm-\mbox{almost surely}$$
where $h(p)=-(p\log_2 p+(1-p)\log_2(1-p))$.

Using Billingsley's Theorem (see for example \cite{Fal97}), it is then easy to conclude that $$\diminf(m)=\dimsup(m)=h(p)$$
where $\diminf(m)$ and $\dimsup(m)$ are the lower and the upper dimension
defined by
\begin{equation}\label{EQdim}
\left\{
\begin{aligned}
\diminf(m)&=\inf(\dim(E)\ ;\ m(E)>0)\\
\null\\
\dimsup(m)&=\inf(\dim(E)\ ;\ m([0,1]\setminus E)=0)
\end{aligned}
\right.
\end{equation}
It means that the measure $m$ is supported by a set of Hausdorff
dimension $h(p)$ and that every set of dimension less that $h(p)$
is negligible. We say that the measure $m$ is unidimensional with
dimension $h(p)$. 

If $\Dim(E)$ is the packing dimension of a set $E$ and if
\begin{equation}\label{EQDim}
\left\{
\begin{aligned}
\Diminf(m)&=\inf(\Dim(E)\ ;\ m(E)>0)\\
\null\\
\Dimsup(m)&=\inf(\Dim(E)\ ;\ m([0,1]\setminus E)=0)
\end{aligned}
\right.
\end{equation}
we can also conclude that 
$$\Diminf(m)=\Dimsup(m)=h(p).$$
\subsection{Multifractal analysis of binomial cascades}
Binomial cascades are also known to be multifractal measures and it is easy to compute their multifractal spectrum. Let 
$$E_\beta=\left\{x\ ;\ \lim_{n\to \infty}\frac{\log m(I_n(x))}{\log\vert I_n(x)\vert}=\beta\right\}$$
and recall that 
$$\frac{\log m(I_n(x))}{\log\vert I_n(x)\vert}=-\left(\frac{S_n}n\log_2 p+\left( 1-\frac{S_n}n\right)\log_2 (1-p)\right).$$
If $\beta\in[-\log_2 p,-\log_2(1-p)]$, we can find $\theta\in[0,1]$ such that 
$$\beta=-(\theta\log_2 p+(1-\theta)\log_2(1-p)).$$
It follows that 
$E_\beta=\left\{\frac{S_n}n\to\theta\right\}$ and we can conclude that
\begin{equation}\label{EQmultif}
\dim(E_\beta)=-(\theta\log_2\theta+(1-\theta)\log_2(1-\theta))=h(\theta) :=F(\beta)
\end{equation}
where
$F(\beta)=h\left(\frac{\beta+\log_2(1-p)}{\log_2(1-p)-\log_2 p}\right)$.
\begin{center}
\includegraphics[scale=0.3]{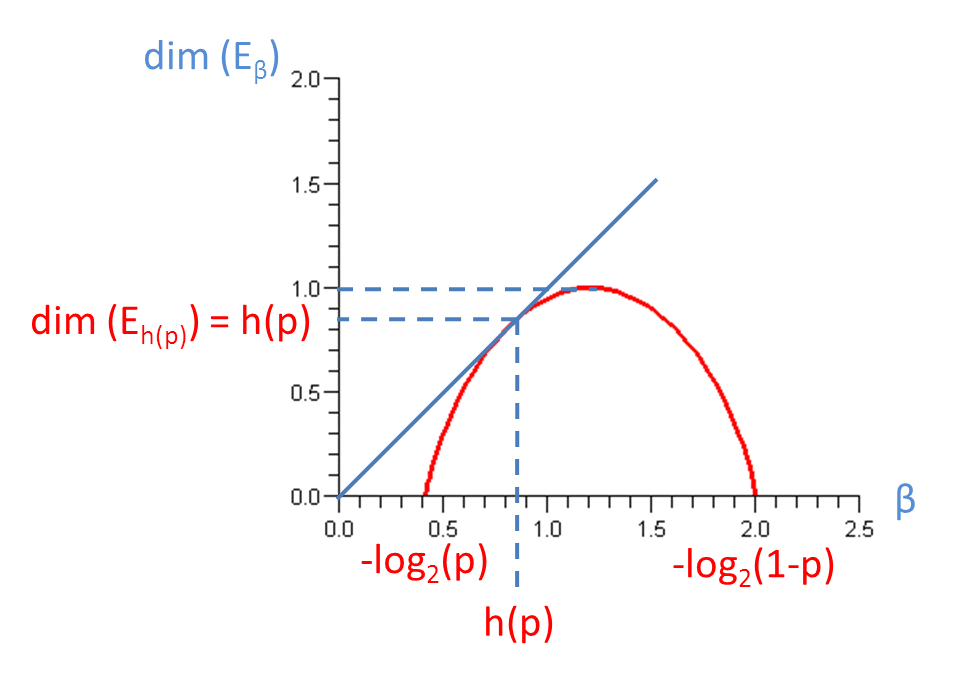}
\captionof{figure}{The spectrum of the measure $m$}
\end{center}
\subsection{Binomial cascades satisfy the multifractal formalism}
We can also rewrite formula (\ref{EQmultif}) in the following way. If $m_\theta$ be the binomial 
cascade with parameter $\theta$, the measure $m_\theta$ is supported by $E_\beta$ and we have
$$\dim(E_\beta)=\dim(m_\theta)=h(\theta).$$
Moreover, if $q\in\R$ is such that
$$\theta=\frac{p^q}{p^q+(1-p)^q},$$
and if $I\in\f n$, we have
\begin{eqnarray*}
 m_\theta(I)&=&\theta^{S_n}(1-\theta)^{n-S_n}\\
&=&\frac{p^{qS_n}(1-p)^{q(n-S_n)}}{\left(p^q+(1-p)^q\right)^n}\\
&=&m(I)^q\vert I\vert^{\tau(q)}
\end{eqnarray*}
where $\tau(q)=\log_2\left( p^q+(1-p)^q\right)$ is the structure function of the measure $m$ at state $q$.

Finally, if we observe that $\beta=-(\theta\log_2 p+(1-\theta)\log_2(1-p))=-\tau'(q)$, we can conclude that

\begin{eqnarray*}
 \dim(E_\beta)&=&-(\theta\log_2\theta+(1-\theta)\log_2(1-\theta))\\
&=&-q\tau'(q)+\tau(q)\\
&=&\tau^*(-\tau'(q))\\
&=&\tau^*(\beta)
\end{eqnarray*}
where $\tau^*(\beta)=\inf_t(t\beta+\tau(t))$ is the Legendre transform of $\tau$. 

We say that the measure $m$ satisfies the multifractal formalism and that $m_\theta$ is a Gibbs measure at state $q$. Such a construction of an auxiliary cascade will be used in Section \ref{SECmulti}.

\begin{remark}
The new measure $m_\theta$ is obtained from $m$ by changing the parameters $(p,1-p)$ in $\left(\frac{p^q}{p^q+(1-p)^q},\frac{(1-p)^q}{p^q+(1-p)^q}\right)$. The quantity $\frac1{p^q+(1-p)^q}$ is just the renormalization needed to ensure that the sum of the two parameters is equal to 1. A similar idea will be used to construct auxiliary Mandelbrot cascades (see the beginning of Section \ref{SECmulti}).
\end{remark}

\begin{remark}
If $m$ is a binomial cascade, we have 
$$\sum_{I\in{\mathcal F}_{n+1}}m(I)^q=\sum_{I\in\f n}p^qm(I)^q+(1-p)^qm(I)^q=\left(p^q+(1-p)^q\right)\sum_{I\in\f n}m(i)^q.$$
Finally,
$$\log_2\left( p^q+(1-p)^q\right)=\limsup_{n\to+\infty}\frac1n\log_2\left(\sum_{I\in\f n}m(I)^q\right)$$
which is the classical definition of the structure function $\tau$ (see Section \ref{SECdigression}).
\end{remark}
\subsection{Back to the existence of binomial cascades}
We want to finish this section with an elementary proposition which gives a rigorous proof of the existence of a measure $m$ satisfying (\ref{EQbernoulli}). Denote by $\lambda$ the Lebesgue measure on $[0,1)$ and let

$$m_n=f_nd\lambda\quad \mbox{where}\quad f_n=2^n\sum_{\e 1\cdots\e n}p^{S_n}(1-p)^{n-S_n}1\!\!\!1_{I_{\e 1\cdots\e n}}.$$
If $I=I_{\e 1\cdots\e j}\in\f j$, we have
$$m_j(I)=p^{S_j}(1-p)^{j-S_j}=m_{j+1}(I)=\cdots =m_{j+k}(I)=\cdots$$
and the sequence $(m_n(I))_{n\ge 1}$ is convergent. 
\begin{center}
\includegraphics[scale=0.3]{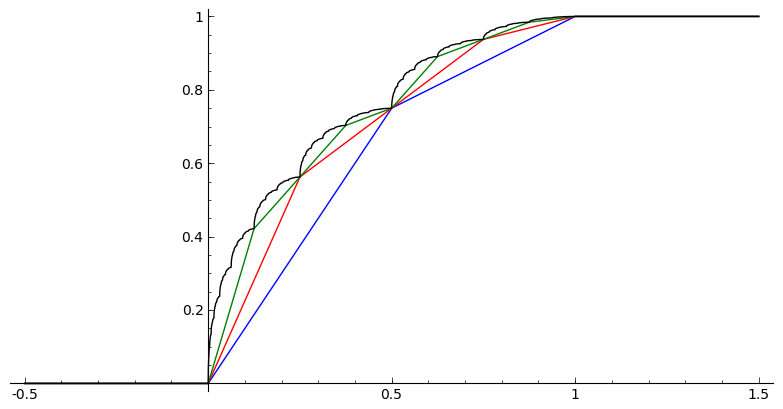}
\captionof{figure}{The repartition function of the measures $m_1$, $m_2$, $m_3$ and $m$}
\end{center}
We can then use the following elementary proposition.
\begin{proposition}\label{PROPweak}
 Let $(m_n)_{n\ge 1}$ be a sequence of finite Borel measures on $[0,1)$. Suppose that for any dyadic interval  $I\in\bigcup_{j\ge 0}\f j$, the sequence $(m_n(I))_{n\ge 1}$ is convergent. Then, the sequence $(m_n)_{n\ge 1}$ is weakly convergent to a finite Borel measure $m$.
\end{proposition}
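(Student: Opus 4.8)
The natural strategy is to combine a pointwise convergence statement for the measures of dyadic intervals with a compactness/density argument that upgrades this to weak convergence. First I would define, for every dyadic interval $I\in\bigcup_{j\ge 0}\f j$, the number $m(I):=\lim_{n\to\infty}m_n(I)$, which exists by hypothesis. The candidate limit measure should be built from these values. One clean way is to check that the function $I\mapsto m(I)$ is additive on dyadic intervals: for each $I\in\f j$ with children $I',I''\in\f{j+1}$, one has $m_n(I)=m_n(I')+m_n(I'')$ for every $n$ (since $m_n$ is a measure and $I=I'\sqcup I''$), and passing to the limit gives $m(I)=m(I')+m(I'')$. Iterating, $m(I)=\sum_{J\in\f k,\,J\subset I}m(J)$ for all $k\ge j$. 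This pre-measure on the algebra generated by dyadic intervals then extends, by the Carath\'eodory extension theorem, to a Borel measure $m$ on $[0,1)$, finite because $m([0,1))=\lim_n m_n([0,1))<\infty$ (the sequence $m_n([0,1))$ converges, hence is bounded).

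Next I would establish weak convergence $m_n\to m$, i.e. $\int\nfi\,dm_n\to\int\nfi\,dm$ for every $\nfi\in\cc([0,1))$ bounded and continuous (or, equivalently on this compact-like setting, for $\nfi$ continuous on $[0,1]$). The key point is that finite linear combinations of indicator functions $\un_I$ of dyadic intervals are dense, in the supremum norm, in the space of continuous functions on $[0,1)$: given $\nfi$ continuous and $\eps>0$, uniform continuity yields a dyadic scale $k$ such that the step function $g=\sum_{I\in\f k}\nfi(x_I)\un_I$ (with $x_I$ the left endpoint of $I$) satisfies $\|\nfi-g\|_\infty<\eps$. Since $\int g\,dm_n=\sum_{I\in\f k}\nfi(x_I)m_n(I)\to\sum_{I\in\f k}\nfi(x_I)m(I)=\int g\,dm$ by the hypothesis applied to the finitely many intervals in $\f k$, and since $|\int\nfi\,dm_n-\int g\,dm_n|\le\eps\, m_n([0,1))\le\eps\, C$ with $C=\sup_n m_n([0,1))<\infty$ (and similarly for $m$), a standard $3\eps$-argument gives $\int\nfi\,dm_n\to\int\nfi\,dm$.

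I expect the only genuinely delicate point to be the bookkeeping at the endpoints: working on the half-open interval $[0,1)$ with half-open dyadic intervals means the step functions $g$ are not continuous, so one must be a little careful about what "weak convergence" means here and about which test functions are used. This is handled either by testing against functions continuous on $[0,1]$ and noting that the dyadic step functions approximate them except on a set of arbitrarily small $m_n$- and $m$-measure near the endpoints, or simply by invoking that on $[0,1)$ the algebra of half-open dyadic intervals generates the Borel $\sigma$-algebra and that convergence of $m_n(I)$ for every such $I$, together with uniform boundedness of total masses, already forces weak convergence (a portmanteau-type argument). Everything else — existence of the limits $m(I)$, finite additivity, the extension theorem, and the density/$3\eps$ estimate — is routine.
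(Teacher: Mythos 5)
The second half of your argument (uniform approximation of a continuous $f$ by dyadic step functions, the uniform bound $\sup_n m_n([0,1))<+\infty$, and the $3\eps$ estimate) is exactly the paper's proof; the paper uses it only to show that $\left(\int f\,dm_n\right)_n$ is a Cauchy sequence for every continuous $f$, and then obtains the limit measure $m$ from the Banach--Steinhaus and Riesz representation theorems. The genuine gap is in your first half, where you build $m$ by Carath\'eodory extension of the set function $I\mapsto\lim_n m_n(I)$. Carath\'eodory's theorem requires the pre-measure to be \emph{countably} additive on the algebra of finite unions of half-open dyadic intervals, and finite additivity (which is all you verify) does not imply this here; in fact it can fail under the stated hypotheses. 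Take $m_n=\delta_{1/2-1/n}$: for every dyadic $I$ the sequence $m_n(I)$ is eventually constant, and the limiting set function assigns mass $1$ to $[\,1/2-2^{-j},1/2)$ for every $j$, while these intervals decrease to the empty set; so continuity at $\emptyset$ fails and there is no extension. The same example shows that your identification $\int g\,dm_n\to\int g\,dm$ is also unavailable: the weak limit is $\delta_{1/2}$, for which $m(I)\neq\lim_n m_n(I)$ on those intervals. So the difficulty you flag as ``bookkeeping at the endpoints'' is not routine: mass can accumulate at a dyadic point from the left, and then the dyadic values do not determine the limit measure interval by interval.

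The repair is precisely the route the paper takes: do not try to prescribe $m$ on dyadic intervals at all. Your $3\eps$ estimate already shows that $L(f):=\lim_n\int f\,dm_n$ exists for every $f$ continuous on $[0,1]$; $L$ is a positive linear functional, bounded because the total masses are bounded, and Riesz representation then produces the finite Borel measure $m$ with $\int f\,dm=L(f)$, which is the weak convergence asserted. (As the example also shows, the limit may charge dyadic endpoints, including possibly the point $1$, which is why the paper works with continuous functions on the closed interval $[0,1]$ rather than with the half-open intervals themselves.)
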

\begin{remark}
 In Proposition \ref{PROPweak}, we can of course replace the family of dyadic intervals by the family of $\ell$-adic intervals ($\ell\ge 2$). Proposition \ref{PROPweak} will be used in Section \ref{sec:2} to prove the existence of Mandelbrot caseades.
\end{remark}
\begin{proof}[Proof of Proposition \ref{PROPweak}]
 Observe that if $f$ is a continuous function on $[0,1]$ and $\varepsilon>0$, we can find a function $\varphi$ which is a linear combinaison of functions $1\!\!\!1_I$ with $I\in\bigcup_{j\ge 0}\f j$ and such that $\Vert f-\varphi\Vert_\infty\le\varepsilon$.
 By the hypothesis, the sequence $\int \varphi(x)\,dm_n(x)$ is convergent and we have
\begin{eqnarray*}
\left\vert\int f\,dm_n-\int f\,dm_p\right\vert&\le&\left\vert\int \varphi\,dm_n-\int \varphi\,dm_p\right\vert+\Vert f-\varphi\Vert_\infty (m_n([0,1])+m_p([0,1]))\\
&\le& \left\vert\int \varphi\,dm_n-\int \varphi\,dm_p\right\vert+C\varepsilon .
\end{eqnarray*}
It follows that the sequence $\int f(x)\,dm_n(x)$ is convergent. The conclusion is then a consequence of the Banach-Steinhaus theorem and of the Riesz representation theorem.
\end{proof}

\section{Canonical Mandelbrot cascades : construction and non-degeneracy conditions}
\label{sec:2}
\subsection{Construction}
In all the sequel, $\ell\ge 2$ is an integer and $\f n$ is the set of $\ell$-adic intervals of the $n^{\mbox{\scriptsize th}}$ generation on $[0,1)$. We denote by $\m n$ the set of words of length $n$ written with the letters $0,\cdots,\ell-1$ and $\mathcal{M}=\bigcup_n\m n$. If $\eps=\e 1\cdots \e n\in\m n$, let
$$I_{\e 1\cdots\e n}=\left[\sum_{k=1}^n\frac{\e k}{\ell^k},\sum_{k=1}^n\frac{\e k}{\ell^k}+\frac1{\ell^n}\right)\in \f n.$$
Let $W$ be a non-negative random variable such that $E[W]=1$ and 
$(W_\eps)_{\eps\in\mathcal{M}}$ be a family of independent copies of $W$.

If $\lambda$ is the Lebesgue on $[0,1]$, we can define the sequence of random measures by 
$$m_n=f_n\lambda\quad\mbox{where}\quad f_n=\sum_{\e 1\cdots\e n\in\mathcal{M}}W_{\e 1}W_{\e 1\e 2}\cdots W_{\e 1\cdots\e n}1\!\!\!1_{I_{\e 1\cdots\e n}}.$$
The construction of the  measure $m_n$ uses a multiplicative principle and
$$m(I_{\e 1\cdots\e n})=\ell^{-n}W_{\e 1}W_{\e 1\e 2}\cdots W_{\e 1\cdots\e n}.$$
We have the following existence theorem :
\begin{theorem}[existence of $m$]\label{THEOexistence}
Almost surely, the sequence $(m_n)_{n\ge 1}$ is weakly convergent to a (random) measure $m$. The measure $m$ is called the Mandelbrot cascade associated to the weight $W$.
\end{theorem}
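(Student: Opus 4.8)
The plan is to invoke Proposition~\ref{PROPweak}: it suffices to show that, almost surely, for every $\ell$-adic interval $I\in\bigcup_{j\ge 0}\f j$ the sequence $(m_n(I))_{n\ge 1}$ converges. Fix such an $I=I_{\e 1\cdots\e j}\in\f j$. For $n\ge j$ one computes directly from the definition of $f_n$ that
$$m_n(I)=\ell^{-j}\,W_{\e 1}W_{\e 1\e 2}\cdots W_{\e 1\cdots\e j}\cdot\frac1{\ell^{\,n-j}}\sum_{\eta\in\m{n-j}}W^{(I)}_{\eta_1}W^{(I)}_{\eta_1\eta_2}\cdots W^{(I)}_{\eta_1\cdots\eta_{n-j}},$$
where the $W^{(I)}_{\cdot}$ are the weights indexed by words extending $\e 1\cdots\e j$; crucially these are independent of the prefactor and have the same law as the original family. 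So it is enough to treat the case $I=[0,1)$, i.e. to show $Y_n:=m_n([0,1))=\ell^{-n}\sum_{\eps\in\m n}W_{\e 1}\cdots W_{\e 1\cdots\e n}$ converges almost surely.

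The key observation is that $(Y_n)_{n\ge 1}$ is a martingale with respect to the filtration $\g_n=\sigma(W_\eps:\eps\in\m 1\cup\cdots\cup\m n)$. Indeed, grouping the words of length $n+1$ according to their length-$n$ prefix,
$$Y_{n+1}=\ell^{-(n+1)}\sum_{\eps\in\m n}W_{\e 1}\cdots W_{\e 1\cdots\e n}\sum_{i=0}^{\ell-1}W_{\e 1\cdots\e n i},$$
and since each $W_{\e 1\cdots\e n i}$ is independent of $\g_n$ with $E[W]=1$, taking conditional expectation gives $E[Y_{n+1}\mid\g_n]=\ell^{-n}\sum_{\eps\in\m n}W_{\e 1}\cdots W_{\e 1\cdots\e n}=Y_n$. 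Moreover $Y_n\ge 0$ and $E[Y_n]=E[Y_1]=1$ for all $n$, so $(Y_n)$ is a non-negative martingale bounded in $L^1$. By the martingale convergence theorem it converges almost surely to an integrable limit $Y$. Applying the same argument to each of the countably many dyadic (resp. $\ell$-adic) intervals $I$ and taking the intersection of the corresponding full-measure events, we get that almost surely $(m_n(I))_{n\ge 1}$ converges for every $I\in\bigcup_j\f j$; Proposition~\ref{PROPweak} then yields the weak convergence of $(m_n)$ to a finite (random) Borel measure $m$.

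The only genuinely delicate point is the bookkeeping in the first reduction: one must check that for a fixed $I\in\f j$ the ``restricted'' weight family $(W^{(I)}_\eta)_{\eta\in\mathcal M}$ is a family of independent copies of $W$ and is independent of the finite product $W_{\e 1}\cdots W_{\e 1\cdots\e j}$, so that the self-similar reduction to $I=[0,1)$ is legitimate — this is immediate from the independence of the whole family $(W_\eps)_{\eps\in\mathcal M}$ but deserves to be stated cleanly. Everything else (the martingale property, non-negativity, the uniform $L^1$ bound, the passage from a fixed $I$ to all $I$ simultaneously via a countable intersection) is routine. Note that at this stage we make no claim that $Y>0$ or that $m\neq 0$; non-degeneracy is the subject of Theorem~\ref{THEOequi}.
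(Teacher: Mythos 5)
Your proposal is correct and follows essentially the same route as the paper: establish that $Y_n=m_n([0,1))$ is a non-negative martingale, get almost sure convergence of $m_n(I)$ for each $\ell$-adic $I$ (the paper applies the martingale argument to $m_{k+n}(I)$ directly, while you factor out the prefix product and reuse the $I=[0,1)$ case by self-similarity — a cosmetic difference), take a countable intersection, and conclude via Proposition~\ref{PROPweak}. No gaps.
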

\begin{remark}
 The condition $E[W]=1$ is a natural condition. Indeed if 
$$Y_n:=m_n([0,1])=\int_0^1f_n(t)\,d\lambda(t)=\ell^{-n}\sum_{\e 1\cdots\e n\in\m n}W_{\e 1}W_{\e 1\e 2}\cdots
W_{\e 1\cdots\e n}$$
then,
\begin{eqnarray*}
E[Y_{n}]&=&\ell^{-n}\sum_{\e 1\cdots\eps_{n}\in\m n}E\left[W_{\e 1}W_{\e 1\e 2}\cdots W_{\e 1\cdots\eps_{n-1}}\right]E\left[{W_{\e 1\cdots\eps_{n}}}\right]\\
&=&E[Y_{n-1}]\times E[W]
\end{eqnarray*}
and the condition $E[W]=1$ ensures that the expectation of the total mass doesn't go to 0 or to $+\infty$.
\end{remark}

\begin{proof}
Let $\mathcal{A}_n$ be the $\sigma$-algebra generated by the $W_\eps$, $\eps\in\m 1\cup\cdots\cup\m n$. 
Define $Y_n:=m_n([0,1])$. An easy calculation says 

\begin{eqnarray*}
E[Y_{n+1}\vert \a n]&=&\ell^{-(n+1)}\sum_{\e 1\cdots\eps_{n+1}\in{\mathcal M}_{n+1}}
E[W_{\e 1}W_{\e 1\e 2}\cdots W_{\e 1\cdots\e n}W_{\e 1\cdots\eps_{n+1}}\vert\a n]\\
&=&\ell^{-(n+1)}\sum_{\e 1\cdots\eps_{n+1}\in{\mathcal M}_{n+1}}W_{\e 1}W_{\e 1\e 2}\cdots W_{\e 1\cdots\e n}E[W_{\e 1\cdots\eps_{n+1}}]\\
&=&Y_n
\end{eqnarray*} 
and the sequence $Y_n$ is a non negative martingale. So it is almost surely convergent.

More generally, if $I=I_{\alpha_1\cdots\alpha_k}\in\f k$,
$$
m_{k+n}(I)=
\ell^{-(k+n)}\sum_{\eps_{k+1}\cdots\eps_{k+n}\in\m n}W_{\alpha_1}\cdots W_{\alpha_1\cdots\alpha_k}W_{\alpha_1\cdots\alpha_k\eps_{k+1}}\cdots W_{\alpha_1\cdots\alpha_k\eps_{k+1}\cdots\eps_{k+n}}
$$
and a similar calculation says that $m_{k+n}(I)$ is a non-negative martingale. Finally, for any $I\in\bigcup_{k\ge 0}\f k$ the random quantity $m_n(I)$ is almost surely convergent. 

If we observe that the set $ \bigcup_{k\ge 0}\f k$ is countable, we can also say that almost surely, for any $I\in\bigcup_{k\ge 0}\f k$, $m_n(I)$ is convergent and the conclusion is a consequence of Proposition \ref{PROPweak}.
\end{proof}

\begin{center}
\includegraphics[scale=0.5]{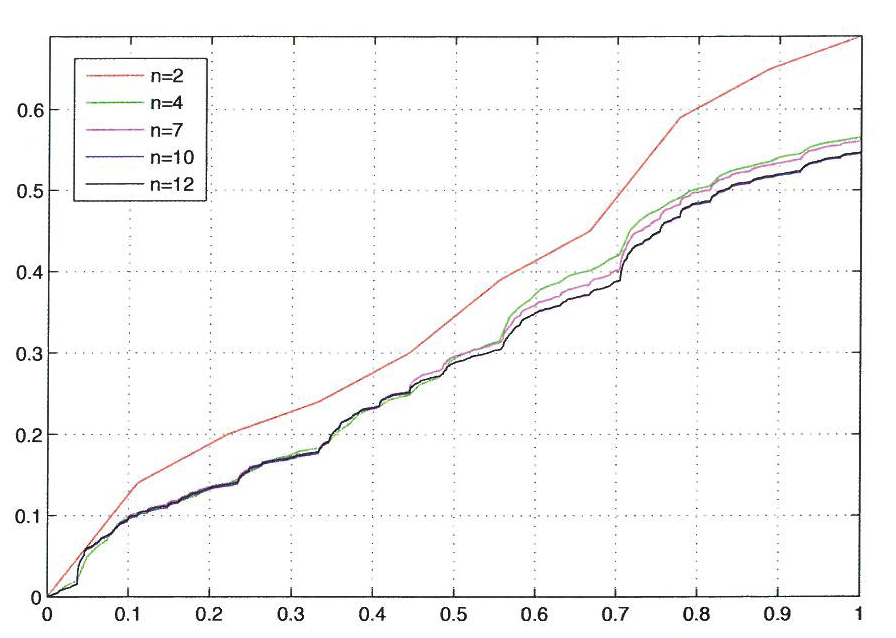}
\captionof{figure}{The repartition function of the random measures $m_n$ (from \cite{BPGaz})}
\end{center}
\subsection{Examples}
\subsubsection{Birth and death processes}
We suppose in this example that the random variable $W$ only takes the value 0 and another positive value. Let $p=1-P[W=0]$. To ensure that $E[W]=1$ we need to take $P\left[ W=\frac1p\right]=p$. When $m\not=0$, its support is a random Cantor set.
\subsubsection{Log-normal cascades}
This is the case where $W$ is a log-normal random variable, that is $W=e^X$ where $X$ follows a normal distribution with expectation $m$ and variance $\sigma^2$. An easy calculation says that 
\begin{eqnarray*}
E\left[e^X\right]&=&\int e^x\,e^{-(x-m)^2/2\sigma^2}\,\frac{dx}{\sigma\sqrt{2\pi}}\\
&=&\int e^{(m+\sigma u)}\,e^{-u^2/2}\,\frac{du}{\sqrt{2\pi}}\\
&=&\int e^{-(u-\sigma)^2/2}\,e^{m+\sigma^2/2}\,\frac{du}{\sqrt{2\pi}}\\
&=&e^{m+\sigma^2/2}
\end{eqnarray*}
In order to have $E[W]=1$ we need to choose $m=-\sigma^2/2$. In other words,
$$W=e^{\sigma N-\sigma^2/2}$$
where $N$ follows a standard normal distribution.
\subsection{The fundamental equations}
Define  $Y_n=m_n([0,1])$ as above. Then,
\begin{equation}\label{EQiter}
\begin{aligned}
Y_{n+1}&=\ell^{-(n+1)}\sum_{\e 1\cdots\eps_{n+1}}
W_{\e 1}W_{\e 1\e 2}\cdots W_{\e 1\cdots\eps_{n+1}}\\
&=\frac1\ell\sum_{j=0}^{\ell-1}W_j\left[\ell^{-n}\sum_{\e 2\cdots\eps_{n+1}}W_{j\e 2}\cdots W_{j\cdots\eps_{n+1}}\right]
\end{aligned}
\end{equation}
and the sequence $(Y_n)$ is a solution in law of the equation
\begin{equation}\label{EQfonda}
Y_{n+1}=\frac1\ell\sum_{j=0}^{\ell-1}W_jY_n(j).
\end{equation}
where the $Y_n(0),\cdots Y_n(\ell-1)$ are independent copies of $Y_n$, and are independent to $W_0,\cdots ,W_{\ell-1}$.   

Taking the limit in the equality (\ref{EQiter}), the total mass $Y_\infty=m([0,1]$ is also a solution in law  of the equation
\begin{equation}\label{EQfonda2}
Y_\infty=\frac1\ell\sum_{j=0}^{\ell-1}W_jY_\infty(j)
\end{equation}
where $Y_\infty(0),\cdots Y_\infty(\ell-1)$ are independent copies of $Y_\infty$,  and are independent to $W_0,\cdots ,W_{\ell-1}$.

Equations (\ref{EQfonda}) and (\ref{EQfonda2}) are called the fundamental equations and will be very useful in the following.
\subsection{Non-degeneracy}
As proved in Theorem \ref{THEOexistence}, the sequence $Y_n=m_n([0,1])$ is a non-negative martingale and we only know in the general case that $E[Y_\infty]\le 1$. In particular, the situation where $E[Y_\infty]=0$ is possible and is called the degenerate case. The first natural problem related to the random measure $m$ is then to find conditions that ensure that $m$ is not almost surely equal to 0 (i.e. $E[Y_\infty]\not= 0$). An abstract answer is given by an equi-integrability property. We will see further a more concrete necessary and sufficient condition (Theorem \ref{THEOequi}) and more concrete sufficient conditions (Proposition \ref{PROPl2} and Theorem \ref{THEOmoments}).
\begin{proposition}\label{PROPequi}
 Let $m$ be a Mandelbrot cascade associated to a weight $W$. Denote as before $Y_n=m_n([0,1])$ and $Y_\infty=m([0,1])$. The following are equivalent
 \begin{enumerate}
 \item $E[Y_\infty]=1$
\item $E[Y_\infty]>0$\quad(i.e. $P[m([0,1])\not=0]>0$)
\item The martingale $(Y_n)$ is equi-integrable
\end{enumerate}
In that case, we say that the Mandelbrot cascade $m$ is non-degenerate.
\end{proposition}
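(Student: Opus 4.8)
The plan is to arrange the three conditions in a short cycle and to concentrate all the real work in the single nontrivial implication $(2)\Rightarrow(1)$, treating everything else with classical facts about the non-negative martingale $(Y_n)$, which converges almost surely to $Y_\infty$ by Theorem~\ref{THEOexistence}.

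First I would dispose of the easy parts. The implication $(1)\Rightarrow(2)$ is immediate since $Y_\infty\ge 0$. For $(1)\Leftrightarrow(3)$ I would use only soft arguments: if $(Y_n)$ is equi-integrable, then together with almost sure convergence this yields convergence in $L^1$, so $E[Y_\infty]=\lim_n E[Y_n]=1$; conversely, if $E[Y_\infty]=1=E[Y_n]$, then $Y_n\ge 0$, $Y_n\to Y_\infty$ almost surely and $E[Y_n]\to E[Y_\infty]$, so Scheff\'e's lemma gives convergence in $L^1$ and hence equi-integrability.

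The core is $(2)\Rightarrow(1)$, and the main tool is the identity
$$E[Y_\infty\mid\a n]=c\,Y_n,\qquad n\ge 1,\qquad\text{where }c:=E[Y_\infty].$$
To establish it I would iterate the fundamental equation~(\ref{EQfonda2}): for each word $\eps\in\m n$ let $Y_\infty^{(\eps)}$ denote the total mass of the Mandelbrot cascade built from the weights $(W_{\eps\sigma})_{\sigma\in\M}$ attached to the subtree rooted at $\eps$. Then $Y_\infty^{(\eps)}$ has the law of $Y_\infty$, is independent of $\a n$, the family $\{Y_\infty^{(\eps)}\}_{\eps\in\m n}$ is independent, and pathwise
$$Y_\infty=\sum_{\eps\in\m n}m(I_\eps)=\ell^{-n}\sum_{\e 1\cdots\e n\in\m n}W_{\e 1}W_{\e 1\e 2}\cdots W_{\e 1\cdots\e n}\,Y_\infty^{(\eps)}.$$
Conditioning on $\a n$ (all summands are non-negative, so interchanging the sum with $E[\cdot\mid\a n]$ is harmless) and using $E[Y_\infty^{(\eps)}]=c$ yields the claimed identity. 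Note also that Fatou's lemma already gives $c\le 1$, while $c>0$ by hypothesis.

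Finally I would conclude as follows. The sequence $(E[Y_\infty\mid\a n])_n$ is the Doob martingale of the integrable variable $Y_\infty$, and $Y_\infty$ is $\a\infty$-measurable, being the almost sure limit of the $\a n$-measurable variables $Y_n$; by the martingale convergence theorem it converges almost surely, and in $L^1$, to $E[Y_\infty\mid\a\infty]=Y_\infty$. Combined with the identity above this gives $c\,Y_n\to Y_\infty$ almost surely, while at the same time $Y_n\to Y_\infty$ almost surely, hence $c\,Y_\infty=Y_\infty$ almost surely. Since $c\in(0,1]$, either $c=1$, which is $(1)$, or $c<1$ and then $Y_\infty=0$ almost surely, contradicting $E[Y_\infty]=c>0$. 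I expect the only genuinely delicate point to be making the self-similar decomposition of $Y_\infty$ indexed by $\m n$ rigorous --- identifying the subtree masses $Y_\infty^{(\eps)}$, checking their independence from $\a n$, and deducing $E[Y_\infty\mid\a n]=c\,Y_n$; once that is in place, the remainder is routine.
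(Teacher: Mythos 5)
Your proof is correct and rests on exactly the same key identity as the paper's: iterating the fundamental equation to decompose $Y_\infty$ over generation $n$ and obtaining $E[Y_\infty\mid\mathcal{A}_n]=E[Y_\infty]\,Y_n$. The paper merely chains the implications a little differently --- it normalizes $Z=Y_\infty/E[Y_\infty]$ so that $(Y_n)$ is the closed, hence equi-integrable, Doob martingale of $Z$ (giving $2\Rightarrow 3$ at once) and then gets $E[Y_\infty]=1$ from $L^1$ convergence, whereas you identify the a.s.\ limit to force $c=1$ and invoke Scheff\'e for $1\Rightarrow 3$; both variants are routine once the self-similar decomposition is in place.
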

\begin{proof}
Suppose that 2. is true. Considering $Z=\frac{Y_\infty}{E[Y_\infty]}$, it follows that the fundamental equation
 $$Z=\frac1\ell\sum_{j=0}^{\ell-1}W_jZ(j)$$
has a solution satisfying $E[Z]=1$.

Iterating the fundamental equation, we get 
$$Z=\frac1{\ell^n}\sum_{\e 1\cdots\e n\in\m n}W_{\e 1}\cdots W_{\e 1\cdots\e n}Z(\e 1\cdots\e n)$$
in which the $Z(\e 1\cdots\e n)$ are copies of $Z$ independent to the $W_\eps$. Let $\a n$ be again the $\sigma$-algebra generated by the $W_\eps$, $\eps\in\m 1\cup\cdots\cup\m n$. We get 
$$E[Z\vert \a n]=\frac1{\ell^n}\sum_{\e 1\cdots\e n\in\m n}W_{\e 1}\cdots W_{\e 1\cdots\e n}E[Z(\e 1\cdots\e n)] =Y_n$$
and the martingale $(Y_n)$ is equi-integrable.

The proof of $3\Rightarrow 1$ is elementary. Indeed, if $(Y_n)$ is equi-integrable, it converges to $Y_\infty$ in $L^1$. In particular, $\displaystyle E[Y_\infty]=\lim_{n\to \infty}E[Y_n]=1$. 
\end{proof}

\begin{remark}
In fact, the proof of Proposition \ref{PROPequi} says that the condition of non degeneracy of the cascade $m$ is equivalent to the existence of a non negative solution $Z$ satisfying $E[Z]=1$ for the fundamental equation 
\begin{eqnarray}\label{EQNfond}
Z=\frac1\ell\sum_{j=0}^{\ell-1}W_jZ(j).
\end{eqnarray}
\end{remark}

\begin{remark}
Equation (\ref{EQNfond}) may have non-integrable solutions. For example, if $\ell=2$ and $W=1$, equation (\ref{EQNfond}) becomes
 $$Z=\frac12(Z(1)+Z(2)).$$
If $Z(1)$ et $Z(2)$ are two independent Cauchy variables (with density  $\frac{dz}{\pi(1+z^2)}$), then $Z$ is also a Cauchy variable.
\end{remark}

 In the non-degenerate case, we only know that $P[m\not= 0]>0$ almost surely. A natural question is then to ask if $P[m\not= 0]=1$ almost surely. The answer to this question is easy.

\begin{proposition}
 Suppose that the Mandelbrot cascade $m$ is non-degenerate. Then,  
 $$P[m\not= 0]=1\quad\mbox{if and only if}\quad P[W=0]=0.$$
\end{proposition}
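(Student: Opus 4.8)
The plan is to prove the two implications separately, both resting on a zero--one law for the event $\{m=0\}$ together with the fundamental equation~(\ref{EQNfond}).

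First I would establish that, for a non-degenerate cascade, the event $A=\{Y_\infty=0\}$ has probability either $0$ or $1$... but more precisely, since non-degeneracy already forces $P[A]<1$, I want to identify $P[A]$ exactly. Write $A_j$ for the analogous event attached to the sub-cascade on the $j$-th $\ell$-adic child, i.e. the event $\{Y_\infty(j)=0\}$ in the notation of~(\ref{EQfonda2}). The variables $W_0,\dots,W_{\ell-1}$ and $Y_\infty(0),\dots,Y_\infty(\ell-1)$ are jointly independent, and from $Y_\infty=\frac1\ell\sum_{j=0}^{\ell-1}W_jY_\infty(j)$ with all terms non-negative we get
$$\{Y_\infty=0\}=\bigcap_{j=0}^{\ell-1}\{W_jY_\infty(j)=0\}=\bigcap_{j=0}^{\ell-1}\bigl(\{W_j=0\}\cup\{Y_\infty(j)=0\}\bigr).$$
Setting $q=P[Y_\infty=0]$ and $p_0=P[W=0]$, independence gives $q=\bigl(p_0+(1-p_0)q\bigr)^\ell$. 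So $q$ is a fixed point of $t\mapsto\bigl(p_0+(1-p_0)t\bigr)^\ell$ on $[0,1]$; $t=1$ is always a fixed point, and non-degeneracy tells us $q<1$, so $q$ is the \emph{other} fixed point when it exists.

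Now I would split on $P[W=0]$. If $P[W=0]=0$, the equation becomes $q=q^\ell$ with $q\in[0,1)$, whose only solution is $q=0$; hence $P[m\neq 0]=1$. Conversely, if $P[W=0]=p_0>0$, I claim $q>0$. One way: the function $g(t)=\bigl(p_0+(1-p_0)t\bigr)^\ell$ satisfies $g(0)=p_0^\ell>0$, so $g(0)>0$; since $g$ is continuous, increasing and convex on $[0,1]$ with $g(1)=1$ and $g(0)>0$, and since we know a fixed point $q<1$ exists, that fixed point must be strictly positive (a convex increasing function equal to the identity at two points $q<1$ cannot pass above the diagonal at $0$ unless $q=0$, but $g(0)=p_0^\ell>0=\,$``diagonal at $0$'' would force $g$ to cross from above to below, contradicting convexity if the only crossing below $1$ were at $0$). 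Cleaner: simply note $q=g(q)\ge g(0)=p_0^\ell>0$ since $g$ is increasing. Thus $P[m=0]=q>0$, so $P[m\neq 0]<1$. Combining the two directions yields the stated equivalence.

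The main obstacle is the derivation of the fixed-point equation $q=(p_0+(1-p_0)q)^\ell$: one must be careful that the decomposition of $\{Y_\infty=0\}$ into child-events is legitimate (it uses that a finite sum of non-negative reals vanishes iff each summand does, plus the genuine independence across the $\ell$ children asserted in~(\ref{EQfonda2})), and one must know that a non-degenerate cascade's total mass satisfies $q<1$ — which is exactly Proposition~\ref{PROPequi}, since $q<1$ is equivalent to $P[m([0,1])\neq0]>0$. Once the equation is in hand, the two implications are immediate from monotonicity of $g$, so the real content is purely the setup, not the algebra.
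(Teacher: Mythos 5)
Your proof is correct and follows essentially the same route as the paper: both derive from the fundamental equation that $q=P[Y_\infty=0]$ is a fixed point of $t\mapsto(p_0+(1-p_0)t)^\ell$, use non-degeneracy (Proposition \ref{PROPequi}) to get $q<1$, and then observe that the remaining fixed point vanishes exactly when $p_0=0$. Your monotonicity argument $q=g(q)\ge g(0)=p_0^\ell$ for the converse is a slightly more explicit version of the paper's fixed-point discussion, but the substance is identical.
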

\begin{proof}
 Suppose that $(Y_n)$ is equi-integrable. Let us write again the fundamental equation

$$Y_\infty=\frac1\ell\sum_{j=0}^{\ell-1}W_jY_\infty(j).$$
Then 
\begin{eqnarray*}
P[Y_\infty=0]&=&P\left[W_0Y_\infty(0)=0\mbox{ and }\cdots\mbox{ and }W_{\ell-1}Y_\infty(\ell-1)=0\right]\\
&=&P[WY_\infty=0]^\ell\\
&=&\left(1-P[W\not=0\ \mbox{and}\ Y_\infty\not= 0]\right)^\ell
\end{eqnarray*}
If $r=P[W=0]$, it follows that $P[Y_\infty=0]$ is a fixed point of the function $$f(x)=(r+(1-r)x)^\ell.$$
We know that $P[Y_\infty=0]<1$. The second fixed point of the function $f$ is equal to 0 if and only if $r=0$. The conclusion follows.
\begin{center}
\includegraphics[scale=0.3]{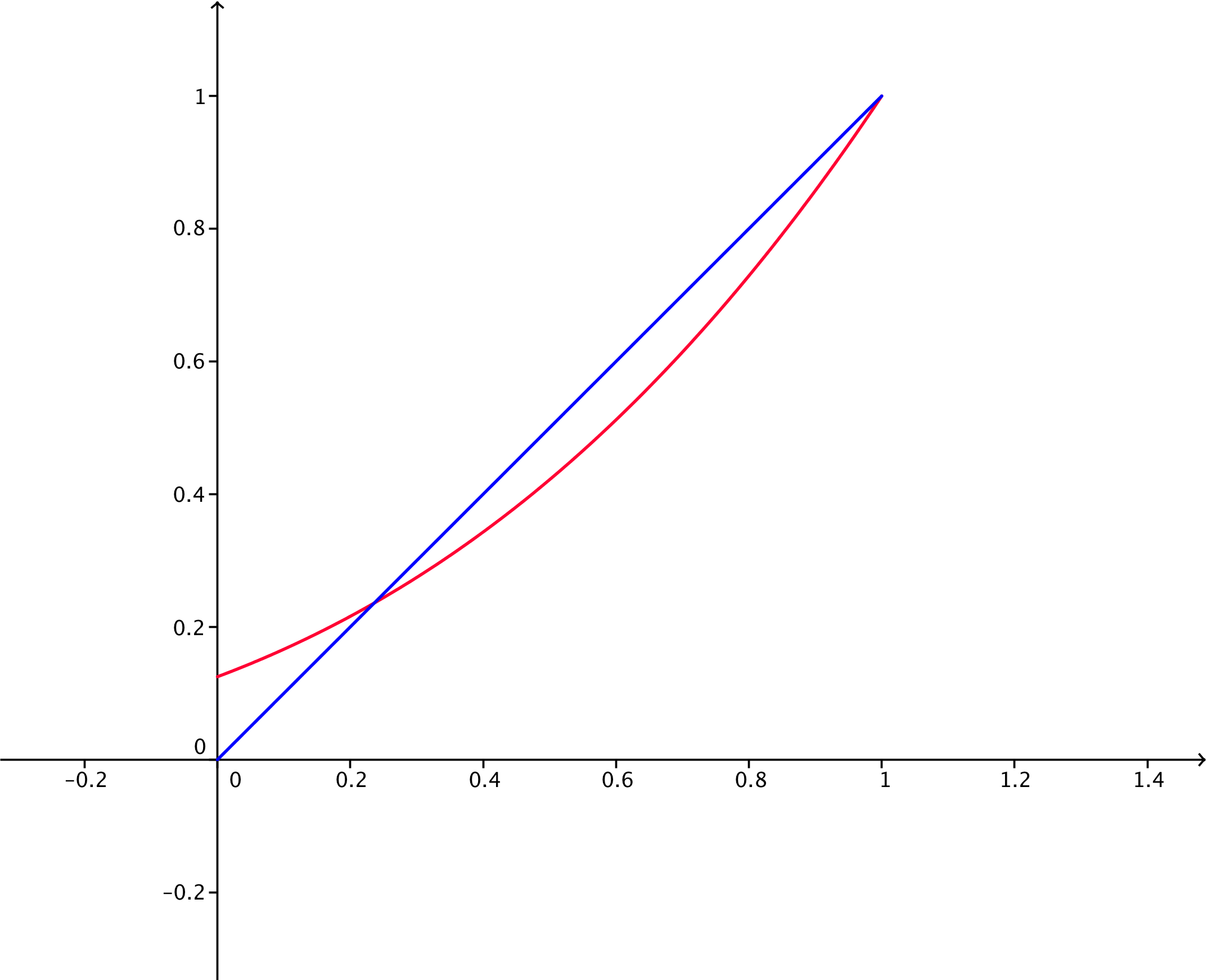}
\captionof{figure}{The graph of the function $f$}
\end{center}
\end{proof}

In the $L^2$ case it is easy to obtain a condition on the second order moment which gives non-degeneracy.

\begin{proposition}\label{PROPl2}
Suppose that $E[W^2]<+\infty$. The following are equivalent
\begin{enumerate}
\item $E[W^2]<\ell$
\item The sequence $(Y_n)$ is bounded in $L^2$
\item $0<E[Y_\infty^2]<+\infty$
\end{enumerate}
In particular, if 1. is true, the sequence $(Y_n)$ is equi-integrable and the cascade $m$ in non-degenerate.
\end{proposition}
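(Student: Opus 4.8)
The plan is to use the fundamental equations (\ref{EQfonda}) and (\ref{EQfonda2}) to derive first a recursion and then an identity for the second moments of $Y_n$ and $Y_\infty$, and to read off all three equivalences by proving $1\Rightarrow 2\Rightarrow 3\Rightarrow 1$. Throughout we use that $(Y_n)$ is a non-negative martingale with $E[Y_n]=1$, as established in Theorem \ref{THEOexistence} and the remark preceding it.

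\emph{Step 1: the second-moment recursion and $1\Leftrightarrow 2$.} Squaring (\ref{EQfonda}) and taking expectations, the $Y_n(j)$ being i.i.d. copies of $Y_n$ independent of $W_0,\dots,W_{\ell-1}$, the $\ell$ diagonal terms contribute $\ell\,E[W^2]\,E[Y_n^2]$ and each of the $\ell(\ell-1)$ off-diagonal terms contributes $E[W_j]E[W_k]E[Y_n(j)]E[Y_n(k)]=1$. Writing $u_n=E[Y_n^2]$ we obtain
$$u_{n+1}=\frac{E[W^2]}{\ell}\,u_n+\frac{\ell-1}{\ell},$$
and $u_1<+\infty$ since $E[W^2]<+\infty$. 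If $E[W^2]<\ell$, the multiplier $a=E[W^2]/\ell$ is $<1$, so this affine recursion has the finite fixed point $(\ell-1)/(\ell-E[W^2])$ and $u_n$ converges to it monotonically; in particular $(u_n)$ is bounded, which is assertion 2. Conversely, if $E[W^2]\ge\ell$ then $a\ge 1$, so $u_{n+1}\ge u_n+\frac{\ell-1}{\ell}$ and $u_n\to+\infty$, contradicting 2. Hence $1\Leftrightarrow 2$.

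\emph{Step 2: $2\Rightarrow 3$ and $3\Rightarrow 1$.} An $L^2$-bounded martingale converges in $L^2$, so $Y_n\to Y_\infty$ in $L^2$; therefore $E[Y_\infty^2]=\lim u_n<+\infty$ and $E[Y_\infty]=\lim E[Y_n]=1$, whence $E[Y_\infty^2]\ge(E[Y_\infty])^2=1>0$, which is assertion 3. For $3\Rightarrow 1$, square the fundamental equation (\ref{EQfonda2}) and take expectations; writing $c=E[Y_\infty]$ and $s=E[Y_\infty^2]$ (both finite under hypothesis 3), the computation of Step 1 gives
$$s=\frac{E[W^2]}{\ell}\,s+\frac{\ell-1}{\ell}\,c^2,\qquad\text{i.e.}\qquad s\Bigl(1-\frac{E[W^2]}{\ell}\Bigr)=\frac{\ell-1}{\ell}\,c^2\ge 0.$$
Since $s>0$, this forces $E[W^2]\le\ell$; and if $E[W^2]=\ell$ the identity yields $c=E[Y_\infty]=0$, hence $Y_\infty=0$ almost surely, contradicting $s>0$. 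Therefore $E[W^2]<\ell$, closing the cycle. Finally, when 1 holds $(Y_n)$ is bounded in $L^2$, hence uniformly integrable (by Markov's inequality $E[Y_n\un_{\{Y_n>K\}}]\le K^{-1}E[Y_n^2]$ is uniformly small for large $K$), so Proposition \ref{PROPequi} gives the non-degeneracy of $m$.

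The only genuinely delicate point is ruling out the boundary case $E[W^2]=\ell$ in the implication $3\Rightarrow 1$: here one must observe that the quadratic identity for $Y_\infty$ degenerates into $E[Y_\infty]=0$, which is precisely degeneracy of the cascade and is incompatible with $E[Y_\infty^2]>0$. Everything else is routine bookkeeping with the fundamental equations.
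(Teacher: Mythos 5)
Your proof is correct and follows essentially the same route as the paper: squaring the fundamental equations to obtain the affine recursion $E[Y_{n+1}^2]=\frac1\ell E[W^2]E[Y_n^2]+\frac{\ell-1}{\ell}$ and the corresponding identity for $E[Y_\infty^2]$, then chaining $1\Rightarrow2\Rightarrow3\Rightarrow1$. The only cosmetic difference is in $3\Rightarrow1$, where the paper first invokes Proposition \ref{PROPequi} to get $E[Y_\infty]=1$ and reads off $(\ell-E[W^2])E[Y_\infty^2]=\ell-1>0$, whereas you keep $E[Y_\infty]$ as an unknown and rule out the boundary case $E[W^2]=\ell$ by hand; both arguments are valid.
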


\begin{proof}
 Let us write the fundamental equation
$$Y_{n+1}=\frac1\ell\sum_{j=0}^{\ell-1}W_jY_n(j).$$
We get
\begin{eqnarray*}
E[Y_{n+1}^2]&=&\frac1{\ell^2}\left(\sum_{j=0}^{\ell-1}E[(W_j^2Y_n(j))^2]+\sum_{i\not= j}E[W_iY_n(i)W_jY_n(j)]\right)\\
&=&\frac1\ell E[W^2]E[Y_n^2]+\frac1{\ell^2}\times\ell(\ell-1)\\
\end{eqnarray*}
It follows that the sequence $(E[Y_n^2])$ is bounded if and only if the common ratio $\frac1\ell E[W^2]$ is lower than 1. So 1. is equivalent to 2.

$2.\Rightarrow 3.$ Suppose that the sequence $(Y_n)$ is bounded in $L^2$. We know that the martingale $(Y_n)$ converges in $L^2$. In particular
$$E[Y_\infty^2]=\lim_{n\to +\infty}E[Y_n^2]<+\infty.$$
Moreover the sequence $(Y_n^2)$ is a submartingale and the sequence $(E[Y_n^2])$ is non-decreasing. It follows that $E[Y_\infty^2]>0$. which gives 3.

$3.\Rightarrow 1.$ Suppose that $0<E[Y_\infty^2]<+\infty$. According to Proposition \ref{PROPequi}, the martingale $(Y_n)$ is non-degenerate. In particular, $E[Y_\infty]=1$. The fundamental equation says that 
$$Y_\infty=\frac1\ell\sum_{j=0}^{\ell-1}W_jY_\infty(j).$$
It follows that 
\begin{eqnarray*}
E[Y_\infty^2]&=&\frac1{\ell^2}\left(\sum_{j=0}^{\ell-1}E[(W_j^2Y_\infty(j))^2]+\sum_{i\not= j}E[W_iY_\infty(i)W_jY_\infty(j)]\right)\\
&=&\frac1\ell E[W^2]E[Y_\infty^2]+\frac1{\ell^2}\times\ell(\ell-1)\\
\end{eqnarray*}
so that
$$\left(\ell-E[W^2]\right)E[Y_\infty^2]=\ell-1.$$
In particular, $E[W^2]<\ell$.
\end{proof}

A generalization of Proposition \ref{PROPl2} in the case where the weight $W$ admits an $L^q$ moment is possible. This is the object of Section \ref{SECmoments}. Nevertheless, we can also give a characterization on the non-degeneracy of the cascade $m$. It is given in terms of the $L\log L$ moment of the weight $W$. 
\begin{theorem}[Kahane, 1976, \cite{KP76}]\label{THEOequi}
 Let $m$ be a Mandelbrot cascade associated to a weight $W$. The following are equivalent
\begin{enumerate}
\item The cascade $m$ is non-degenerate
\item The martingale $(Y_n)$ is equi-integrable
\item $E[W\log W]<\log\ell$
\end{enumerate}
\end{theorem}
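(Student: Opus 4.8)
\textbf{Proof plan for Theorem~\ref{THEOequi}.}
The equivalence $1\Leftrightarrow 2$ is already contained in Proposition~\ref{PROPequi}, so the whole matter reduces to linking the $L\log L$ condition $E[W\log W]<\log\ell$ to equi-integrability of $(Y_n)$. My plan is to treat the two implications separately.

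For the direction ``$E[W\log W]\ge\log\ell$ implies degeneracy'' I would argue by contradiction and extract information from the fundamental equation~(\ref{EQNfond}). Assuming non-degeneracy, normalize so that $Z=Y_\infty$ satisfies $E[Z]=1$, and consider the concave function $\varphi$ defined (say) by $\varphi(x)=\log(1+x)$ or, more flexibly, $\varphi(x)=x^s$ with $s\uparrow 1$; the point is to have a bounded-below concave test function to which Jensen's inequality and the independence structure of~(\ref{EQNfond}) can be applied. Writing $Z=\frac1\ell\sum_{j=0}^{\ell-1}W_jZ(j)$ and conditioning on the $W_j$'s, Jensen gives a functional inequality for $\Phi(t):=E[e^{-tZ}]$ or for $g(s):=E[Z^s]$; differentiating this inequality at the appropriate endpoint and using $E[Z]=1$ produces, after a short computation, the inequality $E[W\log W]<\log\ell$. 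This is the classical ``second moment method at the boundary'' trick; the derivative of the structure function $\tau(q)=\log_\ell E[W^q]-(q-1)$ at $q=1$ is exactly $\frac{1}{\log\ell}E[W\log W]-1$, and non-degeneracy forces $\tau'(1)<0$. The main technical care here is justifying the differentiation under the expectation and handling the possibility $E[W\log W]=+\infty$, which I would dispose of first by a truncation argument.

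For the converse, ``$E[W\log W]<\log\ell$ implies equi-integrability'', I would build an explicit supermartingale that dominates $(Y_n)$ in an $L\log L$-type sense, or equivalently bound $E[Y_n\log^+ Y_n]$ uniformly and invoke the de la Vallée-Poussin criterion. Concretely, set $\psi(x)=x\log x$ (extended by $0$ at $x=0$) and try to show $\sup_n E[\psi(Y_n)]<\infty$. Using the fundamental recursion~(\ref{EQfonda}), $Y_{n+1}=\frac1\ell\sum_j W_jY_n(j)$, one writes $Y_{n+1}\log Y_{n+1}$ and splits $\log Y_{n+1}\le \log(\max_j W_jY_n(j)) + \log\ell$ or, more efficiently, uses the concavity inequality $\log\big(\frac1\ell\sum a_j\big)\le \frac1\ell\sum\log a_j$ is false in the wrong direction, so instead I would pass through the identity $a\log a$ with $a=\frac1\ell\sum b_j$ and the convexity of $x\log x$: $\psi(Y_{n+1})\le \frac1\ell\sum_j \psi(W_jY_n(j))$ is likewise not quite what is needed because of the $\frac1\ell$ prefactor inside $\psi$. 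The right bookkeeping is to track $u_n:=E[Y_n\log Y_n]$ and show it satisfies an affine recursion $u_{n+1}\le u_n\cdot\big(\tfrac1\ell E[W]\big) + (\text{constant involving }E[W\log W]-\log\ell)$, wait—$\frac1\ell E[W]\cdot\ell=E[W]=1$, so the recursion is $u_{n+1}\le u_n + E[W\log W]-\log\ell + C$; one must be more careful and obtain a genuine contraction or a summable increment. Here the cleanest route, which I expect to be the real content of the proof, is the change-of-measure / size-biasing argument of Kahane--Peyri\`ere: introduce the size-biased weight $\widehat W$ with law $E[W\,;\,\cdot\,]$ and an associated ``spine'' measure on the tree under which $Y_\infty=0$ can be shown to have probability strictly less than one precisely when $\sum$ along the spine of $\log W_{\text{spine}} - \log\ell$ has negative mean, i.e.\ when $E[W\log W]<\log\ell$; equi-integrability then follows from the Biggins-type $L^1$-convergence criterion for branching random walks.

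The step I expect to be the main obstacle is the $3\Rightarrow 2$ direction: getting a \emph{uniform} bound on $E[Y_n\log^+Y_n]$ (or on $E[Y_n^s]$ for a suitable $s<1$ via an interpolation/bootstrap) out of the strict inequality $E[W\log W]<\log\ell$. The naive recursion does not close because the multiplicative structure produces cross terms of the form $E[W_i Y_n(i)\,\log(W_j Y_n(j))]=E[W]\,E[Y_n\log W]$-type quantities that must be collected exactly; keeping track of all $\ell(\ell-1)$ cross terms versus the $\ell$ diagonal terms, and seeing that the strict inequality is what makes the resulting linear recursion stable, is where the real estimate lives. A pragmatic fallback, if the $L\log L$ estimate proves too delicate to close directly, is to prove instead that $E[Y_n^s]$ stays bounded for some $s\in(0,1)$ with $s\uparrow 1$ as $E[W\log W]\uparrow\log\ell$, using the fundamental equation and the inequality $(\sum a_j)^s\le\sum a_j^s$, which gives the clean recursion $E[Y_{n+1}^s]\le \ell^{s-1}E[W^s]\,E[Y_n^s]$; boundedness holds as soon as $\ell^{s-1}E[W^s]<1$, i.e.\ $\tau(s)<0$, and since $\tau(1)=0$ and $\tau'(1)=\frac1{\log\ell}E[W\log W]-1<0$ under hypothesis~3, such an $s<1$ exists — this yields $L^s$-boundedness, hence equi-integrability, completing the circle.
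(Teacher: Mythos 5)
Your reduction of the problem to the equivalence of 2 and 3 via Proposition~\ref{PROPequi}, and your identification of $\tau'(1^-)=E[W\log_\ell W]-1$ as the governing quantity, match the paper. But both of your core estimates have genuine gaps.

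For the necessity direction, the subadditivity of $x\mapsto x^q$ applied to the fundamental equation only yields $\ell^q\le\ell E[W^q]$, i.e.\ $\tau(q)\ge 0$ for $q\le 1$, hence $\tau'(1^-)\le 0$ --- a \emph{non-strict} inequality. The whole difficulty, which your ``short computation'' glosses over, is ruling out the boundary case $E[W\log W]=\log\ell$. The paper does this with a quantitative refinement of subadditivity, namely $(x+y)^q\le x^q+qy^q$ for $0<y\le x$, applied on the event $A=\{W_1Z(1)\ge W_0Z(0)\}$, together with a small lemma guaranteeing $E[X^q\un_{X'\ge X}]\ge\delta E[X^q]$ uniformly in $q\in[0,1]$; this produces $\tau(q)\ge-\log_\ell\left(1+(q-1)\delta/\ell\right)$ and hence the strict bound $\tau'(1^-)\le-\delta/(\ell\log\ell)<0$. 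Without some such strengthening your argument proves only $E[W\log W]\le\log\ell$.

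For the sufficiency direction, your ``pragmatic fallback'' fails for two independent reasons. First, boundedness of $E[Y_n^s]$ for $s\in(0,1)$ is automatic (Jensen gives $E[Y_n^s]\le E[Y_n]^s=1$) and implies nothing about equi-integrability: uniform integrability requires a de la Vall\'ee-Poussin function growing \emph{faster} than linearly, and $x^s$ with $s<1$ grows slower. Second, the condition $\tau(s)<0$ for some $s<1$ is incompatible with hypothesis 3: since $\tau$ is convex with $\tau(1)=0$ and $\tau'(1^-)<0$, one has $\tau(s)\ge-\tau'(1^-)(1-s)>0$ for all $s<1$. The spine/size-biasing route you mention is a legitimate alternative proof, but you only name it without executing it. The paper's actual argument runs in the opposite direction from your $(\sum a_j)^s\le\sum a_j^s$: it uses the \emph{lower} bound $\bigl(\sum_j x_j\bigr)^q\ge\sum_j x_j^q-2(1-q)\sum_{i<j}(x_ix_j)^{q/2}$, takes expectations in the fundamental equation, divides by $1-q$ and lets $q\to 1^-$ to obtain a uniform lower bound $E[Y_n^{1/2}]\ge C>0$; since $(Y_n^{1/2})$ is a supermartingale trivially bounded in $L^2$, it is equi-integrable, converges in $L^1$, and $E[Y_\infty^{1/2}]\ge C>0$, whence non-degeneracy.
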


We begin with a geometric interpretation of the condition $E[W\log W]<\log\ell$. Let us introduce the structure function $\tau$, which is defined by 
\begin{equation}\label{EQtau}
\tau(q)=\log_\ell E\left[\sum_{j=0}^{\ell-1}\left[\frac1\ell W_j\right]^q\right]
=\log_\ell\left(E[W^q]\right)-(q-1).
\end{equation}
Such a formula makes sense when $0\le q\le 1$ (and perhaps for other values of $q$) and we always use the convention $0^q=0$. In particular, 
$$\tau(0)=1+\log_\ell(P[W\not= 0])$$
which will be seen as the almost sure Hausdorff dimension of the closed support of the measure $m$. 

The function $\tau$ is continuous and convex on $[0,1]$ and we will show that 
$$\tau'(1^-)=E[W\log_\ell W]-1\le +\infty.$$
It follows that Condition 3 in Theorem \ref{THEOequi} is equivalent to $\tau'(1^-)<0$.

Set $\phi(q)=E[W^q]$. In order to prove that $\tau'(1^-)=E[W\log_\ell W]-1$, we have to understand why we can write $\phi'(1^-)=E[W\log W]$, with a possible value equal to $+\infty$. Indeed, using the dominated convergence theorem, we have $\phi'(q)=E[W^q\log W]$ when $0\le q<1$. On one hand, the convexity of the function $\phi$ allows us to write $$\displaystyle\lim_{q\to1^-}\phi'(q)=\phi'(1^-)\le +\infty.$$
On the other hand, 
$$\phi'(q)=E[W^q\log W]=E[W^q\log W\,\un_{\{W<1\}}]+E[W^q\log W\,\un_{\{W\ge 1\}}].$$
The non-negative quantity $E[W^q\log W\,\un_{\{W\ge 1\}}]$ increases to $E[W\log W\,\un_{\{W\ge 1\}}]$ and by the dominated convergence theorem, the quantity $E[W^q\log W\,\un_{\{W<1\}}]$ goes to $E[W\log W\,\un_{\{W<1\}}]$. The formula $\phi'(1^-)=E[W\log W]$ follows.
\begin{proof}[Proof of Theorem \ref{THEOequi}]
According to Proposition \ref{PROPequi}, we just have to prove that Conditions 2 and 3 are equivalent.
\vskip 0.3cm
Step 1. {\it $\tau'(1^-)\le 0$ is a necessary condition.}\\

\noindent
Suppose that  the sequence $(Y_n)$ is equi-integrable. Then, the fundamental equation
$$Z=\frac1\ell\sum_{j=0}^{\ell-1}W_jZ(j)$$
has a non-negative solution with expectation equal to 1.
If $0<q\le 1$, the function  $x\mapsto x^q$ is subadditive (that is satisfies $(a+b)^q\le a^q+b^q$). We get 
$$E\left[\ell^qZ^q\right]\le\sum_{j=0}^{\ell-1}E[W_j^qZ(j)^q]=\ell E[W^q]E[Z^q].$$
Observe that $E[Z^q]>0$, so that 
$$\ell^q\le\ell E[W^q].$$
Finally, $\tau(q)\ge0$ if $q\le 1$ and $\tau'(1^-)\le 0$.
\vskip 0.3cm
Step 2. {\it More precisely, $\tau'(1^-)< 0$ is a necessary condition.}\\

\noindent
We have to improve the previous result. We need a lemma which gives a more precise estimate than the subadditivity of the function $x\mapsto x^q$.
\begin{lemma}\label{LEMsub}
If  $0<q<1$ and if  $0<y\le x$, then \quad$(x+y)^q\le x^q+qy^q$.
\end{lemma}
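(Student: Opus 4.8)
The plan is to exploit the concavity of the function $\psi(t)=t^q$ on $(0,+\infty)$, which holds precisely because $0<q<1$. Since $\psi$ is concave and differentiable with $\psi'(t)=qt^{q-1}$, its graph lies below each of its tangent lines; writing down the tangent at the point $x$ and evaluating it at the abscissa $x+y$ gives
$$(x+y)^q=\psi(x+y)\le \psi(x)+y\,\psi'(x)=x^q+q\,x^{q-1}y.$$
Equivalently, one may start from $(x+y)^q-x^q=\int_x^{x+y}q\,t^{q-1}\,dt$ and bound the integrand by its value at the left endpoint $t=x$, which yields the same estimate.

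It then remains to control the term $q\,x^{q-1}y$, and the single point requiring care is the sign of the exponent. Since $q-1<0$, the map $t\mapsto t^{q-1}$ is \emph{decreasing} on $(0,+\infty)$, so the hypothesis $0<y\le x$ gives $x^{q-1}\le y^{q-1}$. Therefore
$$q\,x^{q-1}y\le q\,y^{q-1}y=q\,y^q,$$
and combining this with the previous inequality yields $(x+y)^q\le x^q+q\,y^q$, which is the claim.

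There is no genuine obstacle here: the statement is an elementary one-variable estimate, and the only subtlety is exactly the observation just made, namely that the hypothesis $y\le x$ combined with the negative exponent $q-1$ is what upgrades the crude tangent-line bound $q\,x^{q-1}y$ into the sharper quantity $q\,y^q$ (the plain subadditivity $(x+y)^q\le x^q+y^q$, used in Step 1, corresponds to discarding this refinement). One could alternatively set $t=y/x\in(0,1]$ and verify that $g(t)=1+qt^q-(1+t)^q\ge 0$ on $(0,1]$, but that route requires a short discussion of the sign of $g'$, whereas the concavity argument above is immediate.
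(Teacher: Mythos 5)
Your proof is correct and is essentially the argument of the paper: the paper normalizes to $y=1$, $x\ge 1$ by homogeneity and applies the mean value theorem to get $(x+1)^q-x^q\le q$, which is exactly your tangent-line bound $q\,x^{q-1}y$ combined with the monotonicity of $t\mapsto t^{q-1}$, just written without the normalization. No issues.
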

\begin{proof}Using homogeneity, we may assume that $y=1$ and $x\ge 1$. The inequality $(x+1)^q-x^q\le q$ is then an easy consequence of the mean value theorem. 
\end{proof}
We also need the following elementary lemma on random variables.
\begin{lemma}
 Let $X$ and $X'$ be two non-negative i.i.d. random variables such that $E[X]>0$. There exists $\delta>0$ such that for any $q\in[0,1]$,
$E[X^q\un_{X'\ge X}]\ge\delta E[X^q]$.
\end{lemma}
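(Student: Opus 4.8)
The plan is to bound $E[X^q\un_{X'\ge X}]$ from below by half of $E[(X\wedge X')^q]$, using the exchangeability of the i.i.d.\ pair $(X,X')$, and then to compare $E[(X\wedge X')^q]$ with $E[X^q]$ by two crude estimates that are uniform in $q\in[0,1]$. The point of passing through $X\wedge X'$ is that it reshuffles the weight $X^q$ onto the \emph{smaller} of the two variables; this is essential, since a naive attempt to keep the weight on $X$ by restricting to an event of the form $\{X\le c\le X'\}$ throws away all of the mass of $X^q$ carried by $\{X>c\}$, and that mass need not be a uniform fraction of $E[X^q]$ (think of a law with an atom at its essential supremum).

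First I would prove that $E[(X\wedge X')^q]\le 2\,E[X^q\un_{X'\ge X}]$. Writing $m=X\wedge X'$ and splitting according to whether $X\le X'$ or $X>X'$,
$$E[m^q]=E[X^q\un_{X\le X'}]+E[(X')^q\un_{X>X'}].$$
The first term is exactly $E[X^q\un_{X'\ge X}]$; in the second, exchanging the roles of $X$ and $X'$ (they are i.i.d.) gives $E[(X')^q\un_{X>X'}]=E[X^q\un_{X'>X}]\le E[X^q\un_{X'\ge X}]$. Adding the two estimates yields the claim, hence $E[X^q\un_{X'\ge X}]\ge\frac12\,E[(X\wedge X')^q]$.

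It remains to find $\delta'>0$, independent of $q\in[0,1]$, with $E[(X\wedge X')^q]\ge\delta'\,E[X^q]$. Since $E[X]>0$ we have $P[X>0]>0$, so one may fix $a\in(0,1]$ with $p:=P[X\ge a]>0$. On the independent event $\{X\ge a\}\cap\{X'\ge a\}$ one has $X\wedge X'\ge a$, and $z^q\ge a^q\ge a$ for every $z\ge a$ and $q\in[0,1]$; therefore $E[(X\wedge X')^q]\ge a\,p^2$. In the other direction, splitting at $1$ and using $z^q\le 1$ for $z\le 1$ and $z^q\le z$ for $z\ge 1$ (here $q\le 1$), one gets $E[X^q]\le 1+E[X]$. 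Combining the three inequalities,
$$E[X^q\un_{X'\ge X}]\ge\frac{a\,p^2}{2}\ge\frac{a\,p^2}{2(1+E[X])}\,E[X^q],$$
so $\delta=\dfrac{a\,P[X\ge a]^2}{2(1+E[X])}$ works. Here I have used that $E[X]<\infty$; this holds for the weight $W$ to which the lemma is applied, and some integrability assumption of this kind is genuinely needed (a Pareto weight of index one makes the asserted uniform bound fail as $q\to 1^-$).

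The only delicate step is the symmetrization: once the weight sits on $X\wedge X'$, the statement reduces to the trivial facts that $X\wedge X'$ is bounded below on an event of positive probability and that $t\mapsto t^q$ is sandwiched between $\min(1,t)$ and $\max(1,t)$ for $q\in[0,1]$, the rest costing only explicit, harmless constants.
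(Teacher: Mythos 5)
Your proof is correct, but it follows a genuinely different route from the paper's. The paper argues softly: it first shows that $E[X^q\un_{X'\ge X}]>0$ for each fixed $q\in[0,1]$ (otherwise $X=0$ a.s.\ on $\{X'\ge X\}$, hence by symmetry $XX'=0$ a.s., contradicting $E[XX']=E[X]^2>0$), and then invokes the continuity of $q\mapsto E[X^q\un_{X'\ge X}]$ and $q\mapsto E[X^q]$ on the compact interval $[0,1]$ to extract a uniform $\delta$. You instead produce an explicit constant: the symmetrization $E[X^q\un_{X'\ge X}]\ge\frac12 E[(X\wedge X')^q]$ is correct (the exchange $E[(X')^q\un_{X>X'}]=E[X^q\un_{X'>X}]$ is legitimate since the pair is i.i.d.), and the two crude bounds $E[(X\wedge X')^q]\ge a\,P[X\ge a]^2$ and $E[X^q]\le 1+E[X]$ are uniform in $q\in[0,1]$ and compatible with the paper's convention $0^q=0$. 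What each approach buys: the paper's argument is shorter but non-constructive, while yours gives a computable $\delta$ and makes visible exactly where integrability enters. On that last point you are right to flag that $E[X]<+\infty$ is needed and your Pareto example is a genuine counterexample to the unqualified statement (there $E[X^q\un_{X'\ge X}]/E[X^q]=(1-q)/(2-q)\to 0$); note, however, that the paper's proof tacitly needs the same hypothesis, since continuity of $E[X^q]$ at $q=1$ fails when $E[X]=+\infty$, and in the application $X=W_jZ(j)$ has mean $1$, so both arguments apply.
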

\begin{proof}
We claim that for any $q\in[0,1]$, $E[X^q\un_{X'\ge X}]>0$. Indeed, if $E[X^q\un_{X'\ge X}]=0$ for some $q$, then $X$ is almost surely equal to 0 on the set $\{X'\ge X\}$. By symmetry, $X'$ is almost surely equal to 0 on the set $\{X\ge X'\}$. Then $XX'=0$ almost surely, which is in contradiction with $E[XX']=E[X]E[X']>0$. 
Moreover, the functions $q\mapsto E[X^q\un_{X'\ge X}]$ and $q\mapsto E[X^q]$ are continuous on $[0,1]$ and the conclusion follows. 
\end{proof}

We can now prove that $\tau'(1^-)< 0$ is a necessary condition. Let 
$$A=\{W_1Z(1)\ge W_0Z(0)\}.$$
Using subadditivity of $x\mapsto x^q$ and Lemma \ref{LEMsub}, we have :
$$
\left\{^{\displaystyle (\ell Z)^q\le\sum_{j=0}^{\ell-1}(W_jZ(j))^q}
_{\displaystyle(\ell Z)^q\le q(W_0Z(0))^q+\sum_{j=1}^{\ell-1}(W_jZ(j))^q\qquad\mbox{on }A.}\right.
$$
Then,
\begin{eqnarray*}
E\left[(\ell Z)^q\right]&=&E\left[(\ell Z)^q\un_{A}\right]+E\left[(\ell Z)^q\un_{A^c}\right]\\
&\le& qE\left[(W_0Z(0))^q\un_{A}\right]+\sum_{j=1}^{\ell-1}E\left[(W_jZ(j))^q\un_A\right]+\sum_{j=0}^{\ell-1}E\left[(W_jZ(j))^q\un_{A^c}\right]\\
&=&(q-1)E\left[(W_0Z(0))^q\un_{A}\right]+\ell E[W^q]E[Z^q]\\
&\le&(q-1)\delta E[W^q]E[Z^q]+\ell E[W^q]E[Z^q].
\end{eqnarray*}
We get
$$\ell^{1-q}E[W^q]\ge\frac1{1+(q-1)\frac\delta\ell}$$
so that
$$\tau(q)\ge -\log_\ell\left(1+(q-1)\frac{\delta}\ell\right).$$
Finally, 
$$\tau'(1^-)\le-\frac{\delta}{\ell\log\ell}<0.$$
\vskip 0.3cm
Step 3. {\it $\tau'(1^-)< 0$ is a sufficient condition.}\\

\noindent
We suppose that $E[W\log W]<\log\ell$  (i.e. $\tau'(1^-)<0$) and, according to Proposition \ref{PROPequi}, we want to prove that $E[Y_\infty]>0$. Now, we need a precise lower bound of quantities such as $\left(\sum_{j=1}^\ell x_j\right)^q$. We will use the following lemma.
\begin{lemma}\label{LEMMAsur}
If $x_1\cdots x_\ell\ge 0$, and if $0<q\le1$, then
\begin{equation}\label{EQsur}
\left(\sum_{j=1}^\ell x_j\right)^q\ge\sum_{j=1}^\ell x_j^q-2(1-q)\sum_{i<j}(x_ix_j)^{q/2}.
\end{equation}
\end{lemma}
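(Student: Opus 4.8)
The plan is to reduce the $\ell$-variable inequality to a purely analytic statement about a single function of the ratios of the $x_j$'s, then to handle that by induction on $\ell$, the base case $\ell=2$ being the analytic heart of the matter. First I would dispose of trivial cases: if all $x_j=0$ both sides vanish; otherwise, by homogeneity (both sides are positively homogeneous of degree $q$ in the vector $(x_1,\dots,x_\ell)$) I may normalize, say, $\sum_j x_j=1$, so the left-hand side becomes $1$ and the claim is $1\ge\sum_j x_j^q-2(1-q)\sum_{i<j}(x_ix_j)^{q/2}$.

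The key step is the case $\ell=2$, i.e. the inequality $(x+y)^q\ge x^q+y^q-2(1-q)(xy)^{q/2}$ for $x,y\ge 0$ and $0<q\le1$. By homogeneity I would set $y=1$ and $x=t\ge 0$, reducing to showing $g(t):=(1+t)^q-t^q-1+2(1-q)t^{q/2}\ge 0$ for all $t\ge 0$. This I would treat by calculus: $g(0)=0$, and I would examine $g'(t)=q(1+t)^{q-1}-qt^{q-1}+q(1-q)t^{q/2-1}$; writing $g'(t)=q t^{q-1}\bigl[(1+1/t)^{q-1}-1+(1-q)t^{-1/2}\bigr]$, the bracket is nonnegative because $(1+1/t)^{q-1}\ge 1-(1-q)/t$ (convexity of $s\mapsto s^{q-1}$, or Bernoulli's inequality for the exponent $q-1\le 0$) and $1/t\le t^{-1/2}$ is false in general — so instead I would split at $t=1$, using on $t\le1$ the bound $t^{-1/2}\ge 1\ge (1-q)\ge (1-(1+1/t)^{q-1})/\text{(something)}$, and on $t\ge1$ a direct comparison. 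The cleanest route is probably to substitute $x=u^2$, $y=v^2$ and prove $(u^2+v^2)^q\ge u^{2q}+v^{2q}-2(1-q)(uv)^q$; since $u^{2q}+v^{2q}-(uv)^q\bigl(2(1-q)\bigr)$ relates to $(u^q-v^q)^2$-type expressions, one expects $(u^2+v^2)^q\ge (u^q+v^q)^q\cdot(\dots)$ — but more simply: $(u^2+v^2)^q\ge \max(u,v)^{2q}\ge$ the right-hand side when the subtracted term is large enough, so a two-regime argument on $\min/\max$ should close it.

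For the inductive step from $\ell-1$ to $\ell$, I would group $X:=\sum_{j=1}^{\ell-1}x_j$ and apply the $\ell=2$ case to $(X,x_\ell)$, then the inductive hypothesis to $X^q$, and finally bound the resulting cross terms: the $\ell=2$ step produces $-2(1-q)(Xx_\ell)^{q/2}=-2(1-q)\bigl(\sum_{j<\ell}x_j\bigr)^{q/2}x_\ell^{q/2}$, and I would need $\bigl(\sum_{j<\ell}x_j\bigr)^{q/2}\le \sum_{j<\ell}x_j^{q/2}$, which is exactly subadditivity of $s\mapsto s^{q/2}$ (valid since $q/2\le1$). That converts the single cross term into $\sum_{j<\ell}(x_jx_\ell)^{q/2}$, matching the missing terms in $2(1-q)\sum_{i<j}(x_ix_j)^{q/2}$, and the induction goes through.

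The main obstacle I anticipate is the base case $\ell=2$: unlike Lemma \ref{LEMsub}, the correction term here is symmetric and of order $(xy)^{q/2}$ rather than $y^q$, so a one-line mean value argument will not suffice; one genuinely needs a careful one-variable estimate, most likely split into the regimes $x\le y$ and $x\ge y$ (equivalently $t\le 1$ and $t\ge 1$ after normalization), using Bernoulli-type inequalities for the exponents $q$ and $q-1$ on each piece. Once $\ell=2$ is secured, the passage to general $\ell$ is routine via subadditivity of $s\mapsto s^{q/2}$ as above.
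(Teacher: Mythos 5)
Your reduction to the case $\ell=2$ and your inductive step are exactly the paper's: group $X=\sum_{j<\ell}x_j$, apply the two-variable inequality to $(X,x_\ell)$, and use subadditivity of $s\mapsto s^{q/2}$ to split $(Xx_\ell)^{q/2}$ into $\sum_{j<\ell}(x_jx_\ell)^{q/2}$. That part is correct and complete. The gap is the base case $\ell=2$, which is the analytic heart of the lemma and which you never actually prove: you propose three routes, discard one yourself, and end with a ``two-regime argument on $\min/\max$ should close it.'' That last argument fails in a concrete regime. In your substitution $x=u^2$, $y=v^2$, the chain $(u^2+v^2)^q\ge\max(u,v)^{2q}\ge u^{2q}+v^{2q}-2(1-q)(uv)^q$ requires at the second step $2(1-q)(uv)^q\ge\min(u,v)^{2q}$; taking $u=v$ this reads $1\ge 2q$, which is false for every $q>1/2$. (The lemma itself still holds there -- at $u=v$ it reduces to $2^q\ge 2q$ -- but your intermediate bound $\max(u,v)^{2q}$ has thrown away the slack needed to see this.) So as written the proof does not go through.

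The paper's fix is a different normalization that you might have found with one more step: instead of setting $y=1$, use homogeneity to set $xy=1$, i.e.\ prove $\left(x+x^{-1}\right)^q\ge x^q+x^{-q}-2(1-q)$ for all $x>0$. This makes the troublesome cross term a constant. One then studies $\varphi(x)=x^q+x^{-q}-\left(x+x^{-1}\right)^q$, shows $\varphi'\ge 0$ on $(0,1]$ via the Bernoulli-type bound $\left(1+x^2\right)^{q-1}\ge 1+(q-1)x^2$ followed by an elementary check that $\psi(y)=y^q+(q-2)y-(q-1)y^2\ge 0$ on $[0,1]$, and uses the symmetry $\varphi(x)=\varphi(x^{-1})$ to conclude that $\varphi$ attains its maximum at $x=1$, where $\varphi(1)=2-2^q\le 2\ln 2\,(1-q)\le 2(1-q)$. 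Note that the extremal configuration is the symmetric one $x=y$ -- precisely the point where your $\min/\max$ bound is weakest -- which is why any argument that discards one of the two variables cannot work uniformly in $q$. With this base case in place, your induction finishes the proof exactly as in the paper.
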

Suppose first that the lemma is true and let us write again the fundamental equation
$$\ell Y_n=\sum_{j=0}^{\ell-1}W_jY_{n-1}(j).$$
Lemma \ref{LEMMAsur} ensures that 
$$(\ell Y_n)^q\ge\sum_{j=0}^{\ell-1}(W_jY_{n-1}(j))^q-2(1-q)\sum_{i<j}(W_iY_{n-1}(i)W_jY_{n-1}(j))^{q/2}.$$
Taking the expectation and using that $Y_n^q$ is a supermartingale, we get
\begin{eqnarray*}
\ell^qE[Y_n^q]&\ge&\ell E[W^q]E\left[Y_{n-1}^q\right]-\ell(\ell-1)(1-q)E[W^{q/2}]^2\times E\left[Y_{n-1}^{q/2}\right]^2\\
&\ge&\ell E[W^q]E\left[Y_{n}^q\right]-\ell(\ell-1)(1-q)E[W^{q/2}]^2\times E\left[Y_{n-1}^{q/2}\right]^2
\end{eqnarray*}
Finally,
\begin{eqnarray*}
 E\left[Y_{n}^q\right](\ell^{\tau(q)}-1)&=&E\left[Y_{n}^q\right](\ell^{1-q}E[W^q]-1)\\
&\le&\ell^{1-q}(\ell-1)(1-q)E\left[Y_{n-1}^{q/2}\right]^2\times E\left[W^{q/2}\right]^2\\
&\le&\ell^{1-q}(\ell-1)(1-q)E\left[Y_{n-1}^{q/2}\right]^2\times E\left[W^q\right].
\end{eqnarray*}
Dividing by $1-q$ and taking the limit when $q$ goes to $1^-$, we get
$$1\times(-\tau'(1^-)\times\log\ell)\le(\ell-1)E\left[Y_{n-1}^{1/2}\right]^2\times 1$$
which gives that $E\left[Y_{n-1}^{1/2}\right]\ge C>0$. Observing that the supermartingale $\left(Y_n^{1/2}\right)$ converges almost surely to $Y_\infty^{1/2}$ and is bounded in $L^2$, we conclude that $\left(Y_n^{1/2}\right)$ is equi-integrable and converges in $L^1$. In particular,
$$E[Y_\infty^{1/2}]=\lim_{n\to+\infty} E[Y_n^{1/2}]\ge C.$$
So $E[Y_\infty]>0$ and the cascade $m$ is non-degenerate.
\end{proof}

Let us now finish this part with the proof of Lemma \ref{LEMMAsur}. Suppose first that $\ell=2$. By homogeneity the inequality is equivalent to $$\left(x+x^{-1}\right)^q\ge x^q+x^{-q}-2(1-q)$$
for any $x>0$. Let 
$$\varphi(x)=x^q+x^{-q}-\left(x+x^{-1}\right)^q.$$ 
If $0<x\le 1$, we have 
\begin{eqnarray*}
\varphi'(x)&=&qx^{-(q+1)}\left[ x^{2q}-1+\left(1-x^2\right)\left(1+x^2\right)^{q-1}\right]\\
&\ge&qx^{-(q+1)}\left[ x^{2q}-1+\left(1-x^2\right)\left(1+(q-1)x^2\right)\right]\\
&=&qx^{-(q+1)}\left[ x^{2q}+(q-2)x^2-(q-1)x^4\right].
\end{eqnarray*}
By studying the function $\psi(y)=y^q+(q-2)y-(q-1)y^2$, it is then easy to see that $\psi(y)\ge0$ for any $y\in[0,1]$.

Finally, for any $x>0$, 
$$\varphi(x)=\varphi(x^{-1})\le\varphi(1)=2-2^q\le2\ln2(1-q)\le2(1-q).$$
and the proof is done in the case $\ell=2$. 

The general case is easily obtained by induction on $\ell$, using once again that the function $x\mapsto x^{q/2}$ is subadditive if $0<q<1$.
\begin{remark}
In fact, the proof of Lemma \ref{LEMMAsur} says that  the constant $-2(1-q)$ in (\ref{EQsur}) can replaced by $-2\ln 2(1-q)$ which is the optimal one.
\end{remark}
\begin{example}[Birth and death processes] Suppose that $dP_W=(1-p)\delta_0+p\delta_{\frac1{p}}$. Then 
$$E\left[ W^q\right]=0P[W=0]+\left(\frac1{p}\right)^qP\left[W=\frac1{p}\right]=p^{1-q}$$
and
$$\tau(q)=\log_\ell(E[W^q])-(q-1)=(1-q)\times(1+\log_\ell p).$$
The cascade is non-degenerate if and only if $p>1/\ell$, that is if and only if $P[W=0]<1-\frac1\ell$. In that case, the box dimension of the closed support of the measure $m$ is almost surely $d=\tau(0)=1+\log_\ell p$ on the set $\{ m\not= 0\}$.
\end{example}
\begin{example}[Log-normal cascades]
Suppose that
$$W=e^{\sigma N-\sigma^2/2}$$
where $N$ follows a standard normal distribution.
\begin{eqnarray*}
E[W^q]&=&\int e^{q(\sigma x-\sigma^2/2)}e^{-x^2/2}\,\frac{dx}{\sqrt{2\pi}}\\
&=&\int e^{-(x-q\sigma)^2/2}e^{q^2\sigma^2/2}e^{-q\sigma^2/2}\,\frac{dx}{\sqrt{2\pi}}\\
&=&e^{q^2\sigma^2/2}e^{-q\sigma^2/2}
\end{eqnarray*}
and
$$\tau(q)=\log_\ell(E[W^q])-(q-1)=\frac{\sigma^2}{2\ln\ell}(q^2-q)-(q-1).$$
The cascade is non-degenerate if and only if  $\sigma^2<2\log\ell$
\begin{center}
\includegraphics[scale=0.4]{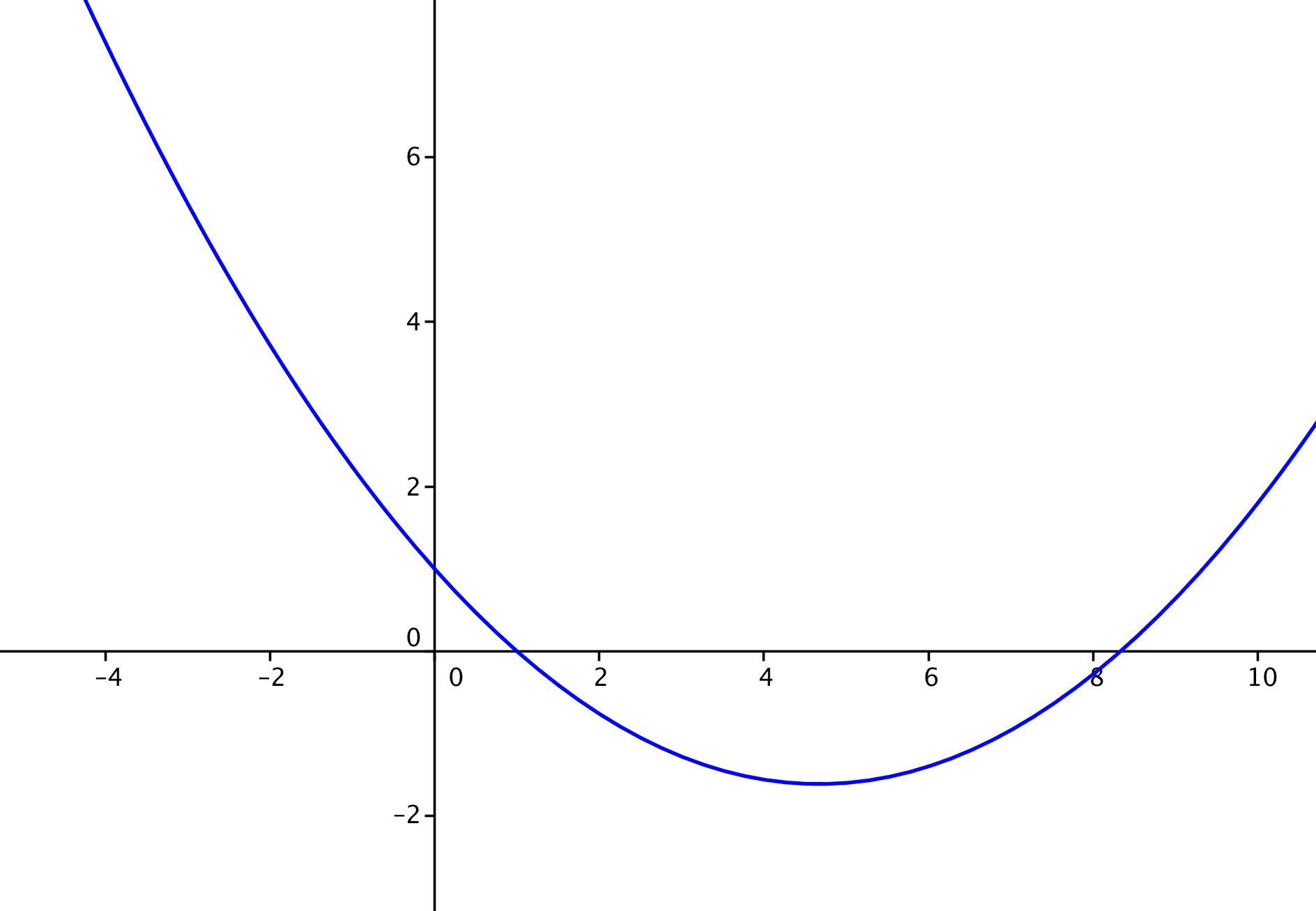}
\captionof{figure}{The structure function $\tau$ for a non-degenerate log-normal cascade}\label{FIGlognorm}
\end{center}
\end{example}
\section{The problem of moments}\label{SECmoments}
In Proposition \ref{PROPl2}, we obtained a necessary and sufficient condition for the martingale $(Y_n)$ to be bounded in $L^2$. This condition can be generalized in the following way.
\begin{theorem}[Kahane, 1976, \cite{KP76}]\label{THEOmoments}
Let $q>1$. Suppose that $E[W^q]<+\infty$. \\The following are equivalent
\begin{enumerate}
\item $E[W^q]<\ell^{q-1}$ (i.e. $\tau(q)<0$)
\item The sequence $(Y_n)$ is bounded in $L^q$
\item $0<E[Y_\infty^q]<+\infty$
\end{enumerate}
In particular, if 1. is true, the sequence $(Y_n)$ is equi-integrable and the cascade $m$ is non-degenerate.
\end{theorem}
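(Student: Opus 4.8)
The plan is to prove the cycle of implications $1\Rightarrow 2\Rightarrow 3\Rightarrow 1$, following the same pattern as for Proposition~\ref{PROPl2}, the only substantial point being $1\Rightarrow 2$.

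\emph{The two easy implications.} For $2\Rightarrow 3$: if $(Y_n)$ is bounded in $L^q$ with $q>1$, it is uniformly integrable (since $E[Y_n\un_{\{Y_n>K\}}]\le K^{1-q}\sup_n E[Y_n^q]\to 0$), hence converges to $Y_\infty$ in $L^1$, so $E[Y_\infty]=\lim_n E[Y_n]=1$; since also $Y_n\to Y_\infty$ almost surely, Fatou gives $E[Y_\infty^q]\le\liminf_n E[Y_n^q]<+\infty$, while Jensen gives $E[Y_\infty^q]\ge(E[Y_\infty])^q=1>0$, which is~3. For $3\Rightarrow 1$: if $0<E[Y_\infty^q]<+\infty$ then $P[Y_\infty>0]>0$, so by Proposition~\ref{PROPequi} the cascade is non-degenerate and $E[Y_\infty]=1$; writing the fundamental equation $\ell Y_\infty=\sum_{j=0}^{\ell-1}W_jY_\infty(j)$ and using super-additivity of $t\mapsto t^q$ ($q\ge 1$) gives $(\ell Y_\infty)^q\ge\sum_{j=0}^{\ell-1}(W_jY_\infty(j))^q$; taking expectations (all terms finite, since $E[Y_\infty^q]<\infty$ and $W$, $Y_\infty$ are independent) and dividing by $E[Y_\infty^q]\in(0,\infty)$ yields $\ell^{q-1}\ge E[W^q]$, i.e.\ $\tau(q)\le 0$. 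Strictness follows because equality in the super-additivity step would force, almost surely, at most one of the i.i.d.\ variables $W_jY_\infty(j)$ to be positive; but each is positive with the same probability $p=P[W>0]\,P[Y_\infty>0]\in(0,1]$ and $\ell\ge 2$, so $P[\text{at most one positive}]<1$, a contradiction; hence $\tau(q)<0$.

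\emph{The implication $1\Rightarrow 2$.} Assume $\tau(q)<0$, i.e.\ $r:=\ell^{\tau(q)}=\ell^{1-q}E[W^q]<1$. From $\ell Y_{n+1}=\sum_{j=0}^{\ell-1}W_jY_n(j)$, set $Z_j=W_jY_n(j)$ (i.i.d.\ with $E[Z_j]=1$). The analytic input is an inequality of the form
\[
\Big(\sum_{j=1}^{\ell}x_j\Big)^{q}\ \le\ \sum_{j=1}^{\ell}x_j^{q}\ +\ c_q\sum_{j=1}^{\ell}x_j\Big(\sum_{i\ne j}x_i\Big)^{q-1}\qquad(x_j\ge 0),
\]
whose essential feature is that the coefficient of $\sum_j x_j^q$ is \emph{exactly} $1$. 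For $q\in(1,2]$ this holds with $c_q=1$: since $q-1\in(0,1]$, sub-additivity gives $\big(x_j+\sum_{i\ne j}x_i\big)^{q-1}\le x_j^{q-1}+\big(\sum_{i\ne j}x_i\big)^{q-1}$, and one multiplies by $x_j$ and sums. For $q>2$ it holds with a suitable $c_q$, proved by induction on $\ell$ together with a one-variable calculus estimate, in the spirit of Lemma~\ref{LEMMAsur}. Applying this to $x_j=Z_j$, taking expectations and using independence,
\[
\ell^{q}E[Y_{n+1}^{q}]\ \le\ \ell\,E[W^{q}]\,E[Y_n^{q}]\ +\ c_q\,\ell\,E\Big[\Big(\textstyle\sum_{i=1}^{\ell-1}Z_i\Big)^{q-1}\Big].
\]
If $q\le 2$ the last expectation is at most $(\ell-1)^{q-1}$ by Jensen. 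If $q>2$ we bound it, via $(\sum_i a_i)^{q-1}\le(\ell-1)^{q-2}\sum_i a_i^{q-1}$, by $(\ell-1)^{q-1}E[W^{q-1}]\,E[Y_n^{q-1}]$, and invoke the theorem at the exponent $q-1$: since $1<q-1<q$ we have $\tau(q-1)<0$ (convexity of $\tau$ with $\tau(1)=0$) and $E[W^{q-1}]<\infty$, so by induction on $\lceil q\rceil$ the sequence $(Y_n)$ is bounded in $L^{q-1}$, say $E[Y_n^{q-1}]\le M$. In either case we obtain a recursion $u_{n+1}\le r\,u_n+C$ for $u_n=E[Y_n^q]$, with $r<1$ and $C<\infty$, whence $\sup_n E[Y_n^q]<\infty$, which is~2. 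The last assertion of the theorem then follows: $L^q$-boundedness with $q>1$ gives equi-integrability, hence non-degeneracy by Proposition~\ref{PROPequi}.

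\emph{Expected obstacle.} The delicate point is the combinatorial inequality for $q>2$: the coefficient of $\sum_j x_j^q$ must be exactly $1$, since a cruder bound (such as $(\sum x_j)^q\le\ell^{q-1}\sum x_j^q$, or one coming from $(a+b)^q\le 2^{q-1}(a^q+b^q)$) would replace the sharp threshold $E[W^q]<\ell^{q-1}$ by a strictly smaller one and destroy the equivalence $1\Leftrightarrow 2$. One must also organize the induction on $\lceil q\rceil$ so that the auxiliary moment $E[Y_n^{q-1}]$ entering the cross term is already known to be bounded. Everything else is the same moment-recursion bookkeeping as in the $L^2$ case of Proposition~\ref{PROPl2}.
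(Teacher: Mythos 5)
Your implications $2\Rightarrow 3$ and $3\Rightarrow 1$ match the paper's in substance (the paper gets $E[Y_\infty^q]>0$ from monotonicity of $E[Y_n^q]$ along the submartingale $(Y_n^q)$ rather than from Jensen after $L^1$-convergence, and it dismisses the equality case in the superadditivity step without the explicit ``at most one $Z_j>0$'' argument you supply --- a welcome addition). The genuine divergence is in $1\Rightarrow 2$. The paper writes $(\ell Y_{n+1})^q\le\bigl(\sum_j(W_jY_n(j))^{q/(k+1)}\bigr)^{k+1}$ for $k<q\le k+1$, using subadditivity of $x\mapsto x^{q/(k+1)}$; the multinomial expansion of the integer power $k+1$ then automatically produces the diagonal $\sum_j(W_jY_n(j))^q$ with coefficient exactly $1$, and the $\ell^{k+1}-\ell$ cross terms are bounded by H\"older against $\bigl(E[W^k]E[Y_n^k]\bigr)^{q/k}$, forcing an induction through all the integer exponents $2,3,\dots,k$ before reaching $q$. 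You instead posit the inequality $\bigl(\sum_jx_j\bigr)^q\le\sum_jx_j^q+c_q\sum_jx_j\bigl(\sum_{i\ne j}x_i\bigr)^{q-1}$ and control the cross term by the $L^{q-1}$ norm, so your induction descends only one unit at a time on $\lceil q\rceil$. Both routes rest on the same crucial feature you correctly isolate: the coefficient of the diagonal must be exactly $1$ so that the recursion $u_{n+1}\le \ell^{1-q}E[W^q]\,u_n+C$ has ratio $\ell^{\tau(q)}<1$ precisely under hypothesis~1; the paper obtains this for free from the exact multinomial expansion, whereas you must prove a bespoke Rosenthal-type lemma. That lemma is true --- for two variables, by homogeneity one checks that $g(x)=\bigl((x+1)^q-x^q-1\bigr)/(x+x^{q-1})$ is continuous on $(0,\infty)$ with limit $q$ at both ends, hence bounded, and the $\ell$-variable case follows by the induction on $\ell$ you indicate, with constant growing linearly in $\ell$ --- but as written it is only asserted for $q>2$, so you should supply this half-page; once it is in place the argument is complete. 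A small bookkeeping point either way: to run the recursion you need $E[Y_n^q]<+\infty$ for each fixed $n$, which holds since $Y_n$ is a finite sum of products of independent copies of $W\in L^q$ (the paper sidesteps this by using the submartingale property to replace $E[Y_n^q]$ by $E[Y_{n+1}^q]$ and solve for it in closed form).
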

\begin{remark}
 The condition $E[W^q]<\ell^{q-1}$ is equivalent to $\tau(q)<0$. The graph of the function $\tau$ allows us determine  the set of values of $q>1$ such that $(Y_n)$ is bounded in $L^q$ (see Figure \ref{FIGlognorm} for the case of log-normal cascades).
\end{remark}
\begin{proof}[Proof of Theorem \ref{THEOmoments}]

$2.\Rightarrow 3.$ If $(Y_n)$ is bounded in $L^q$, the martingale $(Y_n)$  converges in $L^q$. In particular, 
$$E[Y_\infty^q]=\lim_{n\to +\infty}E[Y_n^q]<+\infty.$$
Moreover the sequence $(Y_n^q)$ is a submartingale and the sequence $(E[Y_n^q])$ is non-decreasing. It follows that $E[Y_\infty^q]>0$. which gives 3.

$3.\Rightarrow 1.$ Suppose that $0<E[Y_\infty^q]<+\infty$ and write the fundamental equation
$$\ell Y_\infty=\sum_{j=0}^{\ell-1}W_jY_\infty(j).$$
Using the superaddititivity of the function $x\mapsto x^q$, we get
$$E[\ell^qY_\infty^q]\ge\sum_{j=0}^{\ell-1}E\left[(W_jY_\infty(j))^q\right]=\ell E[W^q]E[Y_\infty^q]$$
and the equality case is not possible. In particular, $E[W^q]<\ell^{q-1}$.

$1.\Rightarrow 2.$ This is the difficult part of the theorem. Let us begin with the easier case $1<q\le 2$. Recall once again the fundamental equation
$$\ell Y_{n+1}=\sum_{j=0}^{\ell -1}W_jY_n(j).$$
The function $x\mapsto x^{q/2}$ is sub additive so that
\begin{eqnarray*}
(\ell Y_{n+1})^q&\le& \left(\sum_{j=0}^{\ell -1}(W_jY_n(j))^{q/2}\right)^2\\
&=& \sum_{j=0}^{\ell -1}(W_jY_n(j))^q +\sum_{i\not=j}(W_iY_n(i))^{q/2}(W_jY_n(j))^{q/2}
\end{eqnarray*}
Taking the expectation, and using that $(Y_n^q)$ is a submartingale, we get
\begin{eqnarray*}
\ell^qE\left[ Y_{n+1}^q\right]&\le&\ell E\left[ Y_{n}^q\right]E\left[W^q\right]+\ell(\ell-1)E[W^{q/2}]^2E\left[ Y_n^{q/2}\right]^2\\
&\le&\ell E\left[ Y_{n}^q\right]E\left[W^q\right]+\ell(\ell-1)E[W]^qE[Y_n]^q\\
&=&\ell E\left[ Y_{n+1}^q\right]E\left[W^q\right]+\ell(\ell-1)
\end{eqnarray*}
Finally,
\begin{eqnarray}\label{EQsimple}
E[Y_{n+1}^q]\le \frac{\ell-1}{\ell^{q-1}-E[W^q]}.
\end{eqnarray}
Suppose now that $k<q\le k+1$ where $k\ge 2$ is an integer and write
$$
(\ell Y_{n+1})^q\le \left(\sum_{j=0}^{\ell -1}(W_jY_n(j))^{q/(k+1)}\right)^{k+1}
= \sum_{j=0}^{\ell -1}(W_jY_n(j))^q +T
$$
where the quantity $T$ is a sum of $\ell^{k+1}-\ell$ terms of the form 
$$(W_{j_1}Y_n(j_1))^{\alpha_1q/(k+1)}\times\cdots\times(W_{j_p}Y_n(j_p))^{\alpha_pq/(k+1)}$$
with $p\ge 2$ and $\alpha_1+\cdots+\alpha_p=k+1$. The expectation of such a term satisfies
\begin{eqnarray*}
&&E\left[(W_{j_1}Y_n(j_1))^{\alpha_1q/(k+1)}\times\cdots\times(W_{j_p}Y_n(j_p))^{\alpha_pq/(k+1)}\right]\\
&\le&
E\left[(W_{j_1}Y_n(j_1))^k\right]^{\alpha_1q/k(k+1)}\times\cdots\times E\left[(W_{j_p}Y_n(j_p))^k\right]^{\alpha_pq/k(k+1)}\\
&=&\left(E\left[W^k\right]E\left[Y_n^k\right]\right)^{q/k}
\end{eqnarray*}
so that
$$\ell^qE\left[Y_{n+1}^q\right]\le \ell E\left[Y_n^q\right]\,E\left[W^q\right] +(\ell^{k+1}-\ell)\left(E\left[W^k\right]E\left[Y_n^k\right]\right)^{q/k}.$$
Using that $(Y_n^q)$ is a submartingale, we get
\begin{eqnarray}\label{EQprocheproche}
E\left[ Y_{n+1}^q\right](\ell^{q-1}-E[W^q])\le( \ell^k-1)\left(E[W^k]E\left[Y_n^k\right]\right)^{q/k}
\end{eqnarray}
which is the generalization of (\ref{EQsimple}). It follows that $(Y_n)$ is bounded in $L^q$ as soon as $(Y_n)$ is bounded in $L^k$.

Let us finally observe that the hypothesis $E[W^q]<\ell^{q-1}$ (i.e.$\tau(q)<0$) implies that $E[W^t]<\ell^{t-1}$ (i.e.$\tau(t)<0$) for any $t$ such that $1<t<q$. Replacing $q$ by $j+1$ in (\ref{EQprocheproche}), we also have
\begin{eqnarray*}
E\left[ Y_{n+1}^{j+1}\right](\ell^{j}-E[W^{j+1}])\le( \ell^j-1)\left(E[W^j]E\left[Y_n^j\right]\right)^{q/j}
\end{eqnarray*}
for any integer $j$ such that $2\le j<k$.  
 Step by step we get that $(Y_n)$ is bounded in $L^2$, $L^3$,..., $L^k$, $L^q$.
\end{proof}
\section{On the dimension of non-degenerate cascades}\label{SECdim}
The Mandelbrot cascade is almost-surely a unidimensional measure as was proved by Peyri\`ere in \cite{KP76}.
\begin{theorem}[Peyri\`ere, 1976, \cite{KP76}]\label{THEOdim}
 Suppose that $0<E[Y_\infty\log Y_\infty]<+\infty$. Then, almost surely,
$$\lim_{n\to +\infty}\frac{\log m(I_n(x))}{\log \vert I_n(x)\vert}=1-E[W\log_\ell W]\quad dm-\mbox{almost every where}$$
\end{theorem}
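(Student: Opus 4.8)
The plan is to transport the $dm$-almost-everywhere statement into an almost sure statement about independent sequences, using the Peyri\`ere measure. First I would record two consequences of the hypothesis. The strict inequality $E[Y_\infty\log Y_\infty]>0$ forces the cascade to be non-degenerate (if $Y_\infty=0$ almost surely then $E[Y_\infty\log Y_\infty]=0$ with the convention $0\log 0=0$), hence $E[Y_\infty]=1$ and, by Theorem \ref{THEOequi}, $E[W\log W]<\log\ell$. Moreover, since $w\log(1/w)\le 1/e$ on $(0,1]$, the negative parts $E[W(\log W)^-]$ and $E[Y_\infty(\log Y_\infty)^-]$ are automatically finite, so in fact $E[W\,|\log W|]<+\infty$ and $E[Y_\infty\,|\log Y_\infty|]<+\infty$. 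These two integrability facts are exactly what the argument needs.

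Next I would set up the basic decomposition. Iterating the construction as in the proof of Theorem \ref{THEOexistence}, if $x\in[0,1)$ has $\ell$-adic digits $\e 1,\e 2,\dots$ then
$$m(I_n(x))=\ell^{-n}\,W_{\e 1}W_{\e 1\e 2}\cdots W_{\e 1\cdots\e n}\;Y^{(n)}(x),$$
where $Y^{(n)}(x)$ is a copy of $Y_\infty$ produced by the weights $W_\eps$ attached strictly below the node $\e 1\cdots\e n$, hence independent of $W_{\e 1},\dots,W_{\e 1\cdots\e n}$. Since $|I_n(x)|=\ell^{-n}$, this gives
$$\frac{\log m(I_n(x))}{\log|I_n(x)|}=1-\frac1{n\log\ell}\sum_{k=1}^n\log W_{\e 1\cdots\e k}-\frac{\log Y^{(n)}(x)}{n\log\ell}.$$
Then I would introduce the Peyri\`ere measure $\mathcal Q$ on $\Omega\times[0,1)$, defined by $\int f\,d\mathcal Q=E\big[\int_0^1 f(\omega,x)\,dm(x)\big]$, which is a probability measure because $E[Y_\infty]=1$. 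A direct computation of the $\mathcal Q$-law of $\big(\e 1,\dots,\e n,\,W_{\e 1},\dots,W_{\e 1\cdots\e n},\,Y^{(n)}(x)\big)$, using the independence of the weights and the fact that $A\mapsto E[\un_{W\in A}W]$ and $B\mapsto E[\un_{Y_\infty\in B}Y_\infty]$ are the size-biased laws of $W$ and $Y_\infty$ (here $E[W]=E[Y_\infty]=1$ is used), shows that under $\mathcal Q$ the digits $(\e k)$ are i.i.d. uniform, the path weights $W_{\e 1},W_{\e 1\e 2},\dots$ are i.i.d. with the size-biased law of $W$ (so $E_{\mathcal Q}[\log W_{\e 1}]=E[W\log W]$), and, for each fixed $n$, $Y^{(n)}(x)$ has the size-biased law $\widehat P_{Y_\infty}$ of $Y_\infty$.

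With this in hand I would conclude. Since $E_{\mathcal Q}[|\log W_{\e 1}|]=E[W\,|\log W|]<\infty$, the strong law of large numbers gives $\frac1n\sum_{k=1}^n\log W_{\e 1\cdots\e k}\to E[W\log W]$, $\mathcal Q$-almost surely. For the last term, for every $\delta>0$ one has $\sum_{n}\mathcal Q\big[\,|\log Y^{(n)}(x)|>\delta n\,\big]=\sum_n \widehat P_{Y_\infty}\big[\,|\log Y_\infty|>\delta n\,\big]<\infty$, because $\int|\log y|\,d\widehat P_{Y_\infty}(y)=E[Y_\infty\,|\log Y_\infty|]<\infty$; Borel--Cantelli applied along a sequence $\delta\downarrow 0$ then yields $\frac1n\log Y^{(n)}(x)\to 0$, $\mathcal Q$-almost surely. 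Inserting both limits into the decomposition above gives, $\mathcal Q$-almost surely,
$$\frac{\log m(I_n(x))}{\log|I_n(x)|}\longrightarrow 1-\frac{E[W\log W]}{\log\ell}=1-E[W\log_\ell W].$$
Finally I would unwind this: the $\mathcal Q$-negligible exceptional event $N=\{(\omega,x):\text{the convergence fails}\}$ satisfies $0=\mathcal Q[N]=E\big[m(N_\omega)\big]$ with $N_\omega=\{x:(\omega,x)\in N\}$, so $P$-almost surely $m(N_\omega)=0$ — which is exactly the claimed statement.

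I expect the main obstacle to be the correct construction of the Peyri\`ere measure and the verification of the independence/size-biasing structure under it, in particular the independence of the weights along the path of $x$ from the pending mass $Y^{(n)}(x)$; together with pinning down precisely where the two halves of the $L\log L$ conditions enter — the positive half of $E[Y_\infty\log Y_\infty]$ only in the Borel--Cantelli step, the negative halves coming for free from $w\log(1/w)\le 1/e$.
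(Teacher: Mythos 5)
Your proof is correct and follows essentially the same route as the paper: the Peyri\`ere measure $Q$, the factorization $m(I_n(x))=\ell^{-n}W_{\varepsilon_1}\cdots W_{\varepsilon_1\cdots\varepsilon_n}Y^{(n)}(x)$ (the paper writes $Y^{(n)}(x)=\ell^{n}\mu_n(I_n(x))$ with $\mu_n=f_n^{-1}m$), the size-biased i.i.d.\ structure of the path weights under $Q$, and the strong law of large numbers for that term. The only (harmless) variation is in the remainder term: you run a single Borel--Cantelli argument on $|\log Y^{(n)}(x)|$ using $E[Y_\infty|\log Y_\infty|]<+\infty$, whereas the paper treats the lower bound via $\E\left[(\ell^{n}\mu_n(I_n(x)))^{-1/2}\right]=E[Y_\infty^{1/2}]$ and the upper bound via the $Y_\infty\log^{+}Y_\infty$ tail --- in both versions the hypothesis enters only through the upper tail, the lower tail being free since $y\log(1/y)\le 1/e$.
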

Let us recall that it is possible that $m=0$ with positive probability. So, the good way to rewrite Theorem \ref{THEOdim} is

\begin{corollary}\label{COROdim}
Suppose that $0<E[Y_\infty\log Y_\infty]<+\infty$. Almost surely on $\{ m\not=0\}$ we have :
\begin{enumerate}
 \item There exists a Borel set $E$ such that 
$$\dim(E)=1-E[W\log_\ell W]\quad\mbox{ and }\quad m([0,1]\setminus E)=0$$
\item If $\dim(F)<1-E[W\log_\ell W]$, then $m(F)=0$.
\end{enumerate}
It follows that $$\dim_*(m)=\dim^*(m)=1-E[W\log_\ell W]$$
where $\dim_*(m)$ and $\dim^*(m)$ are respectively the lower and the upper dimension of the measure $m$ as defined on \em{(\ref{EQdim})}.
\end{corollary}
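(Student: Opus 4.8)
The plan is to transfer the almost-sure statement to an auxiliary probability space by means of the \emph{Peyrière measure}. Since $E[Y_\infty\log Y_\infty]>0$ forces $Y_\infty$ not to be almost surely $0$, Proposition~\ref{PROPequi} gives $E[Y_\infty]=1$ and $m$ is non-degenerate, so by Theorem~\ref{THEOequi} we also have $E[W\log W]<\log\ell$. One can then define a probability measure $Q$ on $\Omega\times[0,1)$ by $dQ(\omega,x)=dm(x)\,dP(\omega)$; its marginal on $\Omega$ has density $Y_\infty$ with respect to $P$. Writing the $\ell$-adic address of $x$ as $\e 1(x)\e 2(x)\cdots$ and denoting by $Z_{\e 1\cdots\e n}$ the total mass of the sub-cascade carried by $I_n(x)$, the multiplicative formula $m(I_n(x))=\ell^{-n}W_{\e 1}\cdots W_{\e 1\cdots\e n}\,Z_{\e 1\cdots\e n}$ yields
\begin{equation*}
\frac{\log m(I_n(x))}{\log|I_n(x)|}=1-\frac1{n\log\ell}\sum_{k=1}^n\log W_{\e 1\cdots\e k}-\frac{\log Z_{\e 1\cdots\e n}}{n\log\ell}.
\end{equation*}
It therefore suffices to prove, $Q$-almost surely, that the middle term tends to $E[W\log_\ell W]$ and the last term to $0$; a Fubini argument will then convert this into the statement ``$P$-almost surely, $dm$-almost everywhere''.

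The structural input, extracted from a single computation, is that under $Q$ the environment is size-biased along the selected branch. A direct computation of $Q$ on events $B$ measurable with respect to the first $n$ generations --- namely $Q(B)=\sum_{w\in\m n}\ell^{-n}E[(\prod_{k=1}^n W_{w_1\cdots w_k})\,Z_w\,\un_B]$ --- combined with the independence of $Z_w$ from the weights above it and with $E[W]=E[Z_w]=1$, yields: (i) under $Q$ the sequence $(W_{\e 1\cdots\e k})_{k\ge1}$ is i.i.d.\ with the tilted law $d\widetilde P_W(w)=w\,dP_W(w)$; and (ii) for each fixed $n$, $E^Q[g(Z_{\e 1\cdots\e n})]=E[Y_\infty\,g(Y_\infty)]$ for every non-negative Borel $g$ (in particular $Z_{\e 1\cdots\e n}>0$ $Q$-a.s.). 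From (i) and the strong law of large numbers under $Q$ --- applicable because $E^Q[\,|\log W_{\e 1}|\,]=E[W|\log W|]<+\infty$, the negative part being bounded and the positive part controlled by $E[W\log W]<+\infty$ --- we get $\frac1n\sum_{k=1}^n\log W_{\e 1\cdots\e k}\to E[W\log W]$, $Q$-a.s.

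It remains to prove $\frac1n\log Z_{\e 1\cdots\e n}\to0$, $Q$-a.s. For the upper bound, (ii) gives $\sum_n Q\big(\log^+Z_{\e 1\cdots\e n}>\eps n\big)=\sum_n E[Y_\infty\un_{\log^+Y_\infty>\eps n}]\le\tfrac1\eps E[Y_\infty\log^+Y_\infty]<+\infty$, which is precisely where the hypothesis $E[Y_\infty\log Y_\infty]<+\infty$ enters; the Borel--Cantelli lemma then gives $\limsup_n\frac1n\log^+Z_{\e 1\cdots\e n}\le\eps$. For the lower bound, (ii) gives $\sum_n Q\big(Z_{\e 1\cdots\e n}<e^{-\eps n}\big)=\sum_n E[Y_\infty\un_{Y_\infty<e^{-\eps n}}]\le\sum_n e^{-\eps n}<+\infty$, so Borel--Cantelli yields $\liminf_n\frac1n\log Z_{\e 1\cdots\e n}\ge-\eps$. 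Letting $\eps\to0$ completes this step. Collecting the three limits, $Q$-a.s.\ one has $\frac{\log m(I_n(x))}{\log|I_n(x)|}\to1-\frac{E[W\log W]}{\log\ell}=1-E[W\log_\ell W]$; since the exceptional set $N$ satisfies $\int m(N_\omega)\,dP(\omega)=Q(N)=0$, for $P$-almost every $\omega$ we have $m(N_\omega)=0$ (vacuously on $\{m=0\}$), which is the assertion of the theorem.

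I expect the main obstacle to be the rigorous construction of $Q$ together with the verification of the spine size-biasing identities (i)--(ii), and the attendant measure-theoretic bookkeeping needed to pass between ``$Q$-almost surely'' and ``$dm\otimes dP$-almost everywhere'' statements; once these are in place the remainder of the argument is only the strong law of large numbers and two applications of Borel--Cantelli.
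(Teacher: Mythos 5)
Your argument is, in substance, the paper's own proof of Theorem~\ref{THEOdim}: the measure $Q$ you build is exactly the Peyri\`ere measure $Q[A]=E[\int\un_A\,dm]$ of the text, your identity (i) is the paper's Lemma~\ref{LEMMA4} (the $g_k$ are i.i.d.\ under $Q$ with the size-biased law $w\,dP_W(w)$), your identity (ii) and the upper-bound Borel--Cantelli estimate $\sum_nE[Y_\infty\un_{\log^+Y_\infty>\eps n}]\le\eps^{-1}E[Y_\infty\log^+Y_\infty]$ are the paper's Lemma~\ref{LEMMA5}, and your $Z_{\e 1\cdots\e n}$ is the paper's $\ell^n\mu_n(I_n(x))$. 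The one genuine (and welcome) deviation is your lower bound: you use $E[Y_\infty\un_{Y_\infty<e^{-\eps n}}]\le e^{-\eps n}$ directly, which is slightly simpler than the paper's route via $\E[(\ell^n\mu_n(I_n(x)))^{-1/2}]=E[Y_\infty^{1/2}]$ and the comparison $\ell^n\mu_n(I_n(x))\ge n^{-4}$. The only thing you have not written out is the passage from the $dm$-almost-everywhere local dimension statement to the two Hausdorff-dimension assertions of the corollary (existence of a carrying set $E$ of dimension $1-E[W\log_\ell W]$ on $\{m\not=0\}$, and nullity of sets of smaller dimension); this is the standard Billingsley lemma step, which the paper also leaves implicit, but since the statement you were asked to prove is the corollary rather than the theorem you should at least cite it.
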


\begin{remark}
 The condition $0<E[Y_\infty\log Y_\infty]<+\infty$ is stronger than $E[W\log W]<\log\ell$ which ensures the non-degeneracy of the cascade $m$. Indeed, suppose that $0<E[Y_\infty\log Y_\infty]<+\infty$. The superadditivity of the function $t\mapsto t\log t$ and the fundamental equation imply that
$$
\ell Y_\infty\log (\ell Y_\infty)\ge \sum_{j=0}^{\ell-1}(W_jY_\infty(j))\log(W_jY_\infty(j)).
$$
Taking the expectation,
$$E[\ell Y_\infty\log(\ell Y_\infty)]\ge\ell E[(WY_\infty)\log(WY_\infty)]$$
and the equality case is not possible. We get
$$E[Y_\infty\log(\ell Y_\infty)]>E[W\log W]E[Y_\infty]+E[Y_\infty\log Y_\infty]E[W]$$
so that
$$E[Y_\infty]\log\ell>E[W\log W]E[Y_\infty].$$
It follows that $E[W\log W]<\log \ell$ and the cascade is non-degenerate.
\end{remark}
\begin{remark}
 Under the hypothesis of Theorem \ref{THEOdim} and Corollary \ref{COROdim}, we have the following relation : 
$$\dim (m)=1-E[W\log_\ell W]=-\tau'(1^-).$$
\begin{center}
\includegraphics{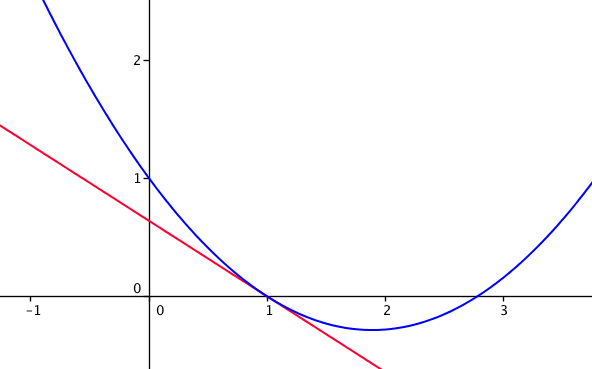}
\captionof{figure}{$\dim (m)=-\tau'(1)$}\label{FIGdim}
\end{center}
In particular, $0<\dim (m)\le 1$ almost surely on the event $\{ m\not= 0\}$.
\end{remark}
\begin{example}[Birth and death processes] Suppose that $dP_W=(1-p)\delta_0+p\delta_{\frac1{p}}$ with $p>1/\ell$. Then, 
$$\tau(q)=(1-q)\times(1+\log_\ell p)\qquad\mbox{and}\qquad\dim m=1+\log_\ell p$$
almost surely on the event $\{m\not= 0\}$.

Let $\mathcal{M}^*_n$ be the set of words $\eps\in\M_n$ such that $m(I_\eps)>0$. The closed support of $m$ is nothing else but the Cantor set
$$\mbox{supp}\,(m)=K=\bigcap_{n\ge 1}\bigcup_{\eps\in\mathcal{M}^*_n}\overline{I_\eps}.$$
Theorem \ref{THEOdim} and Corollary \ref{COROdim} ensure that almost surely on the event $\{m\not= 0\}$, the Hausdorff dimension of $K$ satisfies $\dim (K)\ge 1+\log_\ell p$ which is also known as the box dimension of $K$. finally, 
$$\dim (K)=1+\log_\ell p$$
almost surely on the event $\{m\not= 0\}=\{ K\not=\emptyset\}$.
\end{example} 
\begin{example}[Log-normal cascades] Suppose that $N$ follows a standard normal distribution and 
$W=e^{\sigma N-\sigma^2/2}$ with $\sigma^2<2\log\ell$. We know that
$$\tau(q)=\frac{\sigma^2}{2\log\ell}(q^2-q)+1-q$$ and we find
$$\dim m=1-\frac{\sigma^2}{2\log\ell}\quad \mbox{almost surely}.$$
\end{example} 
\begin{proof}[Proof of Theorem \ref{THEOdim}]
As observed before, under the hypothesis $$0<E[Y_\infty\log Y_\infty]<+\infty,$$ 
the cascade $m$ is non-degenerate. In particular, $E[Y_\infty]=1$. 

We first need to precisely define the sentence "almost surely $dm$-almost every where". 
Let $\tilde\Omega=\Omega\times [0,1]$ endowed with the product $\sigma$-algebra. Define the measure $Q$ by 
$$Q[A]=E\left[\int\un_A\,dm\right].$$
Observe that the measure $m$ depends on $\omega\in \Omega$ so that $Q$ is not a product measure. Nevertheless,
$$Q[\tilde\Omega]=E\left[\int dm\right]=E[Y_\infty]=1$$
so that $Q$ is a probability measure. If a property is true on a set $A\subset\tilde\Omega $ satisfying $Q[A]=1$, then, almost surely, the properly is true $dm$-almost every where. 

Recall that the measure $m$ is constructed as the weak limit of the sequence $m_n=f_n\lambda$ where 
$$f_n=\sum_{\e 1\cdots\e n\in\m n}W_{\e 1}W_{\e 1\e 2}\cdots W_{\e 1\cdots\e n}\un_{I_{\e 1\cdots\e n}}.$$
The proof of Theorem \ref{THEOdim} is an easy consequence of the two following lemmas.
\begin{lemma}\label{LEMMA4}
 Suppose that $E[W\log W]<\log\ell$. Then, almost surely, 
$$\lim_{n\to +\infty}\frac{\log f_n(x)}{n}= E[W\log W]\qquad \mbox{for }dm-\mbox{almost every }x.$$
\end{lemma}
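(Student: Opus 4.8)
The plan is to read $\log f_n(x)$ as a sum along the branch of $[0,1)$ selected by $x$ and to analyse that sum on the probability space $(\tilde\Omega,Q)$, where $Q[A]=E[\int\un_A\,dm]$ is the Peyri\`ere measure introduced above. If $I_n(x)=I_{\e 1\cdots\e n}$ then $f_n(x)=W_{\e 1}W_{\e 1\e 2}\cdots W_{\e 1\cdots\e n}$, hence
$$\frac{\log f_n(x)}{n}=\frac1n\sum_{k=1}^n\log W_{\e 1\cdots\e k}.$$
So it is enough to show that the variables $X_k:=\log W_{\e 1\cdots\e k}$, $k\ge1$, are, \emph{under $Q$}, independent, identically distributed, with $E_Q[X_1]=E[W\log W]$ and $E_Q[|X_1|]<+\infty$. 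The strong law of large numbers applied under $Q$ then yields $\frac1n\sum_{k=1}^nX_k\to E[W\log W]$ $Q$-almost surely, and by the very definition of $Q$ this means precisely: almost surely, $\frac{\log f_n(x)}{n}\to E[W\log W]$ for $dm$-almost every $x$.

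The core computation would be the identity
$$E_Q\Big[\prod_{k=1}^n\phi_k(X_k)\Big]=\prod_{k=1}^nE\big[W\,\phi_k(\log W)\big]$$
for all bounded measurable $\phi_1,\dots,\phi_n$. I would obtain it by decomposing $\int(\cdot)\,dm$ over the $\ell$-adic intervals $I_{\eta_1\cdots\eta_n}\in\f n$ and using the product structure of the cascade: $m(I_{\eta_1\cdots\eta_n})=\ell^{-n}W_{\eta_1}W_{\eta_1\eta_2}\cdots W_{\eta_1\cdots\eta_n}\,Y_\infty(\eta_1\cdots\eta_n)$, where $Y_\infty(\eta_1\cdots\eta_n)$ is the renormalised total mass of the subcascade rooted at $I_{\eta_1\cdots\eta_n}$; it is independent of the weights $W_{\eta_1},\dots,W_{\eta_1\cdots\eta_n}$ lying above it and has expectation $E[Y_\infty]=1$, since the hypothesis $E[W\log W]<\log\ell$ forces non-degeneracy by Theorem \ref{THEOequi}. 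As the weights $W_{\eta_1},W_{\eta_1\eta_2},\dots,W_{\eta_1\cdots\eta_n}$ sit at distinct nodes they are independent, so the expectation of each of the $\ell^n$ terms factorises and is independent of $(\eta_1,\dots,\eta_n)$; summing gives the identity. Taking $n=1$ one reads off that, under $Q$, $X_1$ has the size-biased law $w\,dP_W(w)$, so $E_Q[X_1]=E[W\log W]$, and the product formula says exactly that the $X_k$ are i.i.d. under $Q$.

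It remains to check integrability: $E_Q[|X_1|]=E[W|\log W|]=E[W\log W]+2\,E\big[W(\log W)^-\big]$, the first term finite by hypothesis and the second bounded by $2\sup_{0<w\le1}w|\log w|=2/e$. One should also note that $Q$-almost surely $f_n(x)>0$ for every $n$, so the $X_k$ are genuinely real-valued: indeed $Q[f_n(x)=0]=E[m(\{f_n=0\})]=0$ since $\{f_n=0\}$ is a finite union of $\ell$-adic intervals of zero $m$-mass. The only genuinely delicate point is the law of $(X_k)$ under $Q$ — keeping precise track of the size-biasing caused by the factor $W_{\e 1\cdots\e k}$ present in $m(I_n(x))$, and of the two independences invoked (the subcascade mass $Y_\infty(\eta_1\cdots\eta_n)$ versus the spine weights, and the spine weights among themselves). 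Once the i.i.d. structure of $(X_k)$ under $Q$ is in place, the conclusion is immediate from the law of large numbers.
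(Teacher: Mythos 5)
Your proposal is correct and follows essentially the same route as the paper: your variables $X_k=\log W_{\e 1\cdots\e k}$ are exactly the $\log g_k$ of the text, you identify the same size-biased i.i.d. structure under the Peyri\`ere measure $Q$, and you conclude by the strong law of large numbers under $Q$. The only cosmetic difference is that you factorise $m(I_{\eta_1\cdots\eta_n})$ through the subcascade mass $Y_\infty(\eta_1\cdots\eta_n)$ (using non-degeneracy so that its expectation is $1$) instead of computing with $m_{n+k}(I_\eps)$ and passing to the limit in $k$, and you make the integrability check $E[W\vert\log W\vert]<+\infty$ explicit, which the paper only mentions in passing.
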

\begin{lemma}\label{LEMMA5}
 Suppose that $E[Y_\infty\log Y_\infty]<+\infty$. Let $\mu_n=\frac1{f_n}m$. Then, almost surely, 
$$\lim_{n\to +\infty}\frac{\log \mu_n(I_n(x))}{n}= -\log\ell\qquad \mbox{for }dm-\mbox{almost every }x.$$
\end{lemma}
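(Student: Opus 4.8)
The plan is to reduce the statement to a Borel--Cantelli argument for the probability measure $Q$ on $\tilde\Omega=\Omega\times[0,1]$ introduced in the proof of Theorem~\ref{THEOdim}. The first step is to compute $\mu_n(I_n(x))$ explicitly. If $I=I_{\e 1\cdots\e n}\in\f n$, the multiplicative construction allows one to factor out the weights along the branch leading to $I$ and write
$$m(I)=\ell^{-n}\,W_{\e 1}W_{\e 1\e 2}\cdots W_{\e 1\cdots\e n}\,Y_\infty^{(I)},$$
where $Y_\infty^{(I)}$ is the total mass of the cascade generated by the weights $W_{\e 1\cdots\e n\sigma}$, $\sigma\in\M$; it is a copy of $Y_\infty$ and is independent of $W_{\e 1},\dots,W_{\e 1\cdots\e n}$. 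Since $f_n\equiv W_{\e 1}\cdots W_{\e 1\cdots\e n}$ on $I$, this gives $\mu_n(I)=m(I)/f_n(I)=\ell^{-n}Y_\infty^{(I)}$ (for $dm$-almost every $x$ all the relevant weights are positive, so $\mu_n$ is well defined there). Consequently
$$\frac{\log\mu_n(I_n(x))}{n}=-\log\ell+\frac{\log Y_\infty^{(I_n(x))}}{n},$$
and it is enough to prove that $\frac1n\log Y_\infty^{(I_n(x))}\to 0$ for $Q$-almost every $(\omega,x)$.

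Next, I would fix $\eps>0$ and consider for $n\ge1$ the set
$$A_n=\big\{(\omega,x):\ \big|\log Y_\infty^{(I_n(x))}\big|>\eps n\big\}\subset\tilde\Omega,$$
whose $\omega$-section is the union of those $I\in\f n$ for which $|\log Y_\infty^{(I)}|>\eps n$. Therefore
$$Q[A_n]=E\Big[\sum_{I=I_{\e 1\cdots\e n}\in\f n}\ell^{-n}W_{\e 1}\cdots W_{\e 1\cdots\e n}\,Y_\infty^{(I)}\,\un_{\{|\log Y_\infty^{(I)}|>\eps n\}}\Big],$$
and, using the independence of the branch weights from $Y_\infty^{(I)}$ together with $E[W_{\e 1}\cdots W_{\e 1\cdots\e n}]=1$, each of the $\ell^n$ terms equals $\ell^{-n}E\big[Y_\infty\un_{\{|\log Y_\infty|>\eps n\}}\big]$. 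Hence $Q[A_n]=E\big[Y_\infty\un_{\{|\log Y_\infty|>\eps n\}}\big]$.

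Finally I would sum over $n$: since $\un_{\{|\log Y_\infty|>\eps n\}}$ is nonzero for at most $|\log Y_\infty|/\eps$ values of $n$,
$$\sum_{n\ge1}Q[A_n]=E\Big[Y_\infty\sum_{n\ge1}\un_{\{|\log Y_\infty|>\eps n\}}\Big]\le\frac1\eps\,E\big[Y_\infty|\log Y_\infty|\big],$$
and the right-hand side is finite because $Y_\infty|\log Y_\infty|=Y_\infty\log Y_\infty$ on $\{Y_\infty\ge1\}$, which is integrable under the hypothesis $E[Y_\infty\log Y_\infty]<+\infty$, while $Y_\infty|\log Y_\infty|\,\un_{\{Y_\infty<1\}}\le 1/e$. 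By the Borel--Cantelli lemma $Q[\limsup_nA_n]=0$, so for $Q$-almost every $(\omega,x)$ one has $\limsup_n\frac1n\big|\log Y_\infty^{(I_n(x))}\big|\le\eps$; intersecting these full-measure sets over $\eps=1/k$, $k\ge1$, yields $\frac1n\log Y_\infty^{(I_n(x))}\to0$ $Q$-almost everywhere, which is the claim. I expect the main obstacle to be the identity $Q[A_n]=E\big[Y_\infty\un_{\{|\log Y_\infty|>\eps n\}}\big]$: it is the point where the self-similar decomposition of $m(I)$ and the independence structure must be used carefully, and it is precisely this weighted tail that the $L\log L$-type hypothesis on $Y_\infty$ is tailored to make summable; everything else is bookkeeping.
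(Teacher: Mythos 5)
Your proof is correct and follows essentially the same route as the paper: the size-biased probability $Q$ on $\Omega\times[0,1]$, the factorization $m(I_\eps)=W_{\e 1}\cdots W_{\e 1\cdots\e n}\,\mu_n(I_\eps)$ with $\mu_n(I_\eps)$ distributed as $\ell^{-n}Y_\infty$ and independent of the branch weights, and a Borel--Cantelli argument whose summability is exactly the $L\log L$ hypothesis. The only difference is cosmetic: the paper treats the two tails asymmetrically (the lower one via $\E\left[\left(\ell^n\mu_n(I_n(x))\right)^{-1/2}\right]=E[Y_\infty^{1/2}]$ and a Markov-type estimate, the upper one via Borel--Cantelli on $\{Y_\infty>\alpha^n\}$ using $E[Y_\infty\log_\alpha^+Y_\infty]<+\infty$), whereas you handle both at once through $E[Y_\infty\vert\log Y_\infty\vert]<+\infty$, the lower tail being free because $t\vert\log t\vert$ is bounded on $[0,1]$.
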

Suppose first that Lemma \ref{LEMMA4} and Lemma \ref{LEMMA5} are true and recall that $dm=f_nd\mu_n$. The density $f_n$ is constant on any interval of the $n^{th}$ generation, so that
$$m(I_n(x))=\displaystyle \int_{I_n(x)}f_n(y)d\mu_n(y)=f_n(x)\mu_n(I_n(x)).$$
It follows that 
\begin{eqnarray*}
\frac{\log(m(I_n(x))}{\log\vert I_n(x)\vert}&=&\frac{\log f_n(x)+\log\mu_n(I_n(x))}{-n\log\ell}\\
&\to&-E[W\log_\ell W]+1
\end{eqnarray*}
almost surely $dm$-almost every where.
\begin{proof}[Proof of Lemma \ref{LEMMA4}] Let us write 
$$f_n=g_1\times\cdots\times g_n\qquad\mbox{where}\qquad g_n=\sum_{\e 1\cdots\e n\in\m n}W_{\e 1\cdots\e n}\un_{I_{\e 1\cdots\e n}}.$$
We get
$$\frac{\log f_n}n=\frac1n\sum_{k=1}^n\log g_k$$
and Lemma \ref{LEMMA4} will be a consequence of the strong law of large numbers in the space $\tilde\Omega$ associated to the probability $Q$. 

Let us calculate the law of the random variable $g_n=\sum_{\eps\in\M_n}W_\eps1\!\!\!1_{I_\eps}$. If $\E$ is the expectation related to the probability $Q$ and if $\phi$ is bounded and measurable,

\begin{eqnarray*}
\E[\phi(g_n)]&=&E\left[\int\sum_{\eps\in \M_n}\phi(W_{\eps})\un_{I_{\eps}}\,dm\right]\\
&=&\sum_{\eps\in\M_n}E\left[\phi(W_{\eps})m(I_{\eps})\right]
\end{eqnarray*}
Moreover, if $k\ge 0$, using the independence properties, 
\begin{eqnarray*}
E\left[\phi(W_{\eps})m_{n+k}(I_{\eps})\right]&=&\sum_{\alpha_1\cdots\alpha_k\in \m k}E\left[\phi(W_{\eps})\ell^{-(n+k)}W_{\e 1}\cdots W_{\eps}W_{\eps\alpha_1}\cdots W_{\eps\alpha_1\cdots\alpha_k}\right]\\
&=&\ell^{-n}E\left[\phi(W)W\right].
\end{eqnarray*}
Taking the limit, we get 
\begin{equation}\label{EQNlaw}
\E[\phi(g_n)]=E[\phi(W)W].
\end{equation}
Equation (\ref{EQNlaw}) remains true if $\phi$ is such that $E\left[\vert\phi(W)W\vert\right]<+\infty$. In particular, the random variables $g_n$ have the same law and $\log(g_n)$ are integrable with respect to $Q$.

The independence of the sequence $(g_n)$ is obtained in a similar way. If $\phi_1,\cdots,\phi_n$ are bounded and measurable, we can also write
\begin{eqnarray*}
\E\left[\phi_1(g_1)\cdots\phi_n(g_n)\right]&=&\sum_{\eps\in\M_n}E\left[\phi_1(W_{\e 1})\cdots\phi_n(W_{\e 1\cdots\e n})m(I_\eps)\right]\\
&=&\cdots\\
&=&E[\phi_1(W)W]\times\cdots\times E[\phi_n(W)W]\\
&=&\E\left[\phi_1(g_1)\right]\times\cdots\times\E\left[\phi_n(g_n)\right]
\end{eqnarray*}
and the independence follows. Finally, the strong law of large numbers gives
$$\lim_{n\to +\infty}\frac1n\sum_{k=1}^n\log g_k=\E[\log g_1]=E[W\log W]\quad dQ-\mbox{almost surely.}$$
\begin{remark}[On the importance of the order of the quantifiers]
Let $x\in [0,1]$ and $\e 1\cdots\e n\cdots$ such that $x\in I_{\e 1\cdots\e n}$ for any $n$. We have 
$$\frac{\log(f_n(x))}n=\frac1n(\log W_{\e 1}+\cdots+\log W_{\e 1\cdots\e n}).$$
Using the strong law of large numbers, we get:
$$\mbox{For any }x\in[0,1],\mbox{ almost surely,}\quad\lim_{n\to+\infty}\frac{\log(f_n(x))}{n}=E[\log W]$$
which is different of the conclusion of Lemma \ref{LEMMA4} !
\end{remark}
\end{proof}
\begin{proof}[Proof of Lemma \ref{LEMMA5}] Let us begin with a comment on the definition of the measure $\mu_n$. The function $f_n$ is constant on any interval of the $n^{th}$ generation. Moreover, if $f_n$ is equal to zero on some interval $I$ of the $n^{th}$ generation, then $m(I)=0$. Finally, $f_n\not= 0$ $dm$-almost surely and $\mu_n=\frac1{f_n}m$ is well defined. We can also write $m=f_n\mu_n$ and if $\eps=\e 1\cdots\e n\in\M_n$,
$$m(I_\eps)=W_{\e 1}\cdots W_{\e 1\cdots\e n}\,\mu_n(I_\eps).$$
We claim that $\mu_n(I_\eps)$ is independent to $W_{\eps_1},\cdots,W_{\e 1\cdots\e n}$ and has the same distribution as $\ell^{-n}Y_\infty$. Indeed, 
$$m_{n+k}=f_n[(g_{n+1}\cdots g_{n+k})\,d\lambda]$$
and
$$\mu_n=\lim_{k\to +\infty}(g_{n+1}\cdots g_{n+k})\,d\lambda .$$
In particular,
$$\mu_n(I_\eps)=\lim_{k\to +\infty}\int_{I_\eps}g_{n+1}(x)\cdots g_{n+k}(x)\,d\lambda(x)$$
is clearly independent to $W_{\eps_1},\cdots,W_{\e 1\cdots\e n}$ and an easy calculation gives that it has the same distribution as $\ell^{-n}Y_\infty$. 

Using the previous remark, we get
\begin{eqnarray*}
 \E\left[\left(\ell^n\mu_n(I_n(x))\right)^{-1/2}\right]&=&E\left[\ell^{-n/2}\int\mu_n(I_n(x))^{-1/2}dm(x)\right]\\
&=&\ell^{-n/2}\sum_{\eps\in\M_n} E\left[\mu_n(I_\eps)^{-1/2}m(I_\eps)\right]\\
&=&\ell^{-n/2}\sum_{\eps\in\M_n} E\left[W_{\e 1}\cdots W_{\e 1\cdots\e n}\right]E\left[\mu_n(I_\eps)^{1/2}\right]\\
&=&E\left[Y_\infty^{1/2}\right].
\end{eqnarray*}
It follows that
 $$\E\left[\sum_{n\ge 1}\frac1{n^2}\left(\ell^n\mu_n(I_n(x))\right)^{-1/2}\right]<+\infty.$$
 In particular,  $dQ$-almost surely, $\ell^n\mu_n(I_n(x))\ge 1/n^4$ if $n$ is large enough and we can conclude that almost surely,
 $$\liminf_{n\to +\infty}\frac{\log\left(\ell^n\mu_n(I_n(x))\right)}n\ge 0\quad dm-\mbox{almost every where.}$$
 In other words, almost surely,
 $$\liminf_{n\to +\infty}\left(\frac{ \log\left(\mu_n(I_n(x))\right)}n\right)\ge -\log\ell \quad dm-\mbox{almost every where.}$$
 
 We have now to prove that almost surely, 
 $$\limsup_{n\to +\infty}\left(\frac{ \log\left(\mu_n(I_n(x))\right)}n\right)\le -\log\ell \quad dm-\mbox{almost every where.}$$
Recall that $m(I_\eps)=W_{\e 1}\cdots W_{\e 1\cdots\e n}\,\mu_n(I_\eps)$ with independence properties. If $\alpha >0$,
\begin{eqnarray*}
Q[\ell^n \mu_n(I_n(x))>\alpha^n]&=&E\left[\int \un_{\left\{\ell^n\mu_n(I_n(x))>\alpha^n\right\}}(x)\,dm(x)\right]\\
&=&\sum_{\eps\in\M_n}E\left[\int_{I_\eps}\un_{\left\{\ell^n\mu_n(I_\eps)>\alpha^n\right\}}(x)\,dm(x)\right]\\
&=&\sum_{\eps\in\M_n}E\left[m(I_\eps)\un_{\left\{\ell^n\mu_n(I_\eps)>\alpha^n\right\}}\right]\\
&=&\sum_{\eps\in\M_n}E[W_{\e 1}\cdots W_{\e 1\cdots\e n}]E\left[\mu_n(I_\eps)\un_{\left\{\ell^n\mu_n(I_\eps)>\alpha^n\right\}}\right]\\
&=&E\left[Y_\infty\un_{\left\{ Y_\infty>\alpha^n\right\}}\right]
\end{eqnarray*}
In particular,
\begin{eqnarray*}
\sum_{n\ge 1}Q[\ell^n \mu_n(I_n(x))>\alpha^n]&=&E\left[\sum_{n\ge 1}Y_\infty\un_{\left\{ Y_\infty>\alpha^n\right\}}\right]\\
&\le&E\left[ Y_\infty\log_\alpha^+(Y_\infty)\right]\\
&<&+\infty
\end{eqnarray*}
Using Borel Cantelli's lemma, we get
$$dQ-\mbox{almost surely},\quad \ell^n \mu_n(I_n(x))\le\alpha^n\quad\mbox{if }n\mbox{ is large enough.}$$
In particular, almost surely,
$$\limsup_{n\to +\infty}\frac{\log\left(\ell^n\mu_n(I_n(x))\right)}n\le \alpha\quad dm-\mbox{almost every where}$$
and the conclusion is a consequence of the arbitrary value of $\alpha$.
\end{proof}
\end{proof}
\begin{remark}
 In the eighties, Kahane proved that the condition $0<E[Y_\infty\log Y_\infty]<+\infty$ is not necessary.
\end{remark}
\section{A digression on multifractal analysis of measures}\label{SECdigression}
In order to understand the approach developed in Section \ref{SECmulti}, let us recall some basic facts on multifractal analysis of measures. In this part, $m$ is a deterministic measure on $[0,1]$ with finite total mass. As usual, we define the structure function as 
$$\tau(q)=\limsup_{n\to+\infty}\frac1n\log_\ell\left(\sum_{I\in\f n}m(I)^q\right)$$
and we want to briefly recall the way to improve the formula
$$\dim(E_\beta)=\tau^*(\beta)$$
where  $$E_\beta=\left\{x\ ;\ \lim_{n\to \infty}\frac{\log m(I_n(x))}{\log\vert I_n(x)\vert}=\beta\right\}$$ 
and 
$$\tau^*(\beta)=\inf_{q\in\R}(q\beta+\tau(q))$$
is the Legendre transform of $\tau$.

The function $\tau$ is known to be a non-increasing convex function on $\R$ such that $\tau(1)=0$. Moreover, the right and the left derivative $-\tau'(1^+)$ and $-\tau'(1^-)$ are related to the dimensions of the measure $m$ which are defined in formula (\ref{EQdim}) and (\ref{EQDim}). In the general case, as we can see for example in \cite{H07}, we have 
 \begin{theorem}
$$-\tau'(1^+)\le\diminf(m)\le\Dimsup(m)\le-\tau'(1^-).$$
\end{theorem}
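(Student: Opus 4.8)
The plan is to read $-\tau'(1^+)$ and $-\tau'(1^-)$ off the local behaviour of $m$ along $\ell$-adic intervals. Set $S_n(q)=\sum_{I\in\f n}m(I)^q$ (so $\tau(q)=\limsup_n\frac1n\log_\ell S_n(q)$) and introduce
$$\underline d(x)=\liminf_{n\to+\infty}\frac{\log m(I_n(x))}{-n\log\ell},\qquad \overline d(x)=\limsup_{n\to+\infty}\frac{\log m(I_n(x))}{-n\log\ell}.$$
I will rely on two classical comparison principles for a finite Borel measure on $[0,1]$: Billingsley's lemma (see \cite{Fal97}), which gives $\diminf(m)\ge s$ as soon as $\underline d\ge s$ holds $m$-a.e., and its packing counterpart, which gives $\Dimsup(m)\le s$ as soon as $\overline d\le s$ holds $m$-a.e. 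Granting these, the middle inequality is free: $\dim E\le\Dim E$ for every set, and (assuming $m$ is not the zero measure) $m([0,1]\setminus E)=0$ forces $m(E)>0$, so the infimum defining $\Dimsup(m)$ is taken over a subfamily of the one defining $\diminf(m)$. It then remains to show that $\overline d\le-\tau'(1^-)$ and $\underline d\ge-\tau'(1^+)$, $m$-almost everywhere.

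For the first statement I would fix $q\in(0,1)$ and $\eps>0$, so that $S_n(q)\le\ell^{\,n(\tau(q)+\eps)}$ for all large $n$. For $\beta>0$ put $B_n=\{x:\ 0<m(I_n(x))\le\ell^{-n\beta}\}$ (a Borel set, since $x\mapsto m(I_n(x))$ is constant on $\ell$-adic intervals); writing $m(I)=m(I)^{1-q}m(I)^q$ and bounding the first factor by $\ell^{-n\beta(1-q)}$ on every interval contributing to $B_n$ gives
$$m(B_n)\ \le\ \ell^{-n\beta(1-q)}\,S_n(q)\ \le\ \ell^{\,n(\tau(q)+\eps-\beta(1-q))}.$$
As soon as $\beta>\frac{\tau(q)+\eps}{1-q}$ this is summable in $n$, so Borel--Cantelli yields $m(I_n(x))>\ell^{-n\beta}$ for all large $n$, hence $\overline d(x)\le\beta$, for $m$-a.e.\ $x$. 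Convexity of $\tau$ (with $\tau(1)=0$) makes the slope $q\mapsto\frac{\tau(q)}{1-q}$ nonincreasing on $(0,1)$ with infimum $-\tau'(1^-)$ attained as $q\uparrow1$; letting $\eps\downarrow0$ and $q\uparrow1$ along sequences and intersecting the corresponding full-measure sets gives $\overline d(x)\le-\tau'(1^-)$ $m$-a.e.

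The lower estimate is the mirror image run with $q>1$, for which $S_n(q)\le m([0,1])^q$ so $\tau(q)\le0$ and all quantities stay finite. With $C_n=\{x:\ m(I_n(x))\ge\ell^{-n\alpha}\}$ the same splitting gives $m(C_n)\le\ell^{\,n(\tau(q)+\eps+\alpha(q-1))}$, which is summable once $\alpha<-\frac{\tau(q)+\eps}{q-1}$; Borel--Cantelli then gives $\underline d(x)\ge\alpha$ for $m$-a.e.\ $x$. Since $q\mapsto\frac{\tau(q)}{q-1}$ is nondecreasing on $(1,+\infty)$ with limit $\tau'(1^+)$ as $q\downarrow1$, letting $\eps\downarrow0$ and $q\downarrow1$ produces $\underline d(x)\ge-\tau'(1^+)$ $m$-a.e.

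The one genuinely delicate point is the packing-dimension comparison principle: one may not partition the full-measure set $\{\overline d\le s\}$ into countably many pieces of packing dimension $\le s$ and appeal to countable stability (that would only keep the dimension below the successive approximants). Instead, for each integer $k\ge1$ one uses the inclusion $\{\overline d\le s\}\subset\bigcup_{N\ge1}\{x:\ m(I_n(x))\ge\ell^{-n(s+1/k)}\ \text{for all}\ n\ge N\}$; each set in this union is covered, at every generation $n\ge N$, by at most $\ell^{\,n(s+1/k)}m([0,1])$ intervals of $\f n$, hence has upper box dimension---a fortiori packing dimension---at most $s+1/k$, so the countable union has packing dimension $\le s+1/k$. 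Intersecting these unions over $k$ yields a Borel set of full $m$-measure with packing dimension $\le s$, which is exactly what the principle asserts. Finally one checks there is no degenerate case: if $\tau'(1^-)=-\infty$ the upper bound is vacuous, while convexity together with $\tau(0)\le1$ forces $\tau'(1^+)\ge-\tau(0)\ge-1$, so $-\tau'(1^+)$ is finite.
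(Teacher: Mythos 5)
Your proof is correct, and it fills a gap the paper deliberately leaves open: the text states this theorem without proof, referring the reader to \cite{H07} for the argument. What you have written is essentially the standard proof one finds there. The core mechanism --- bounding $m\{x:\ m(I_n(x))\le\ell^{-n\beta}\}$ and $m\{x:\ m(I_n(x))\ge\ell^{-n\alpha}\}$ by Markov's inequality applied to the partition sums $S_n(q)$ for $q<1$ and $q>1$ respectively, then Borel--Cantelli, then letting the chord slopes $\tau(q)/(1-q)$ and $\tau(q)/(q-1)$ converge to $-\tau'(1^-)$ and $\tau'(1^+)$ by convexity --- is exactly the right one, and your computations check out (including the sign flips coming from dividing by $-n\log\ell$, and the finiteness of $\tau$ on the relevant ranges). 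You were also right to single out the packing-dimension comparison as the delicate step: the decomposition into the sets $E_{N,k}=\{x:\ m(I_n(x))\ge\ell^{-n(s+1/k)}\ \mbox{for all}\ n\ge N\}$, the box-counting bound $N_{\ell^{-n}}(E_{N,k})\le m([0,1])\,\ell^{n(s+1/k)}$, countable stability of $\Dim$ over $N$, and then \emph{intersecting} over $k$ rather than taking a union, is precisely how one gets $\Dimsup(m)\le s$ rather than only $s+1/k$. The remaining ingredients (Billingsley's lemma for the Hausdorff side, $\dim\le\Dim$ for the middle inequality, and the degenerate cases $\tau'(1^-)=-\infty$, $-\tau'(1^+)\le 1$) are all handled correctly, so nothing is missing.
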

\vskip 1cm
\begin{center}
\includegraphics[scale=0.15]{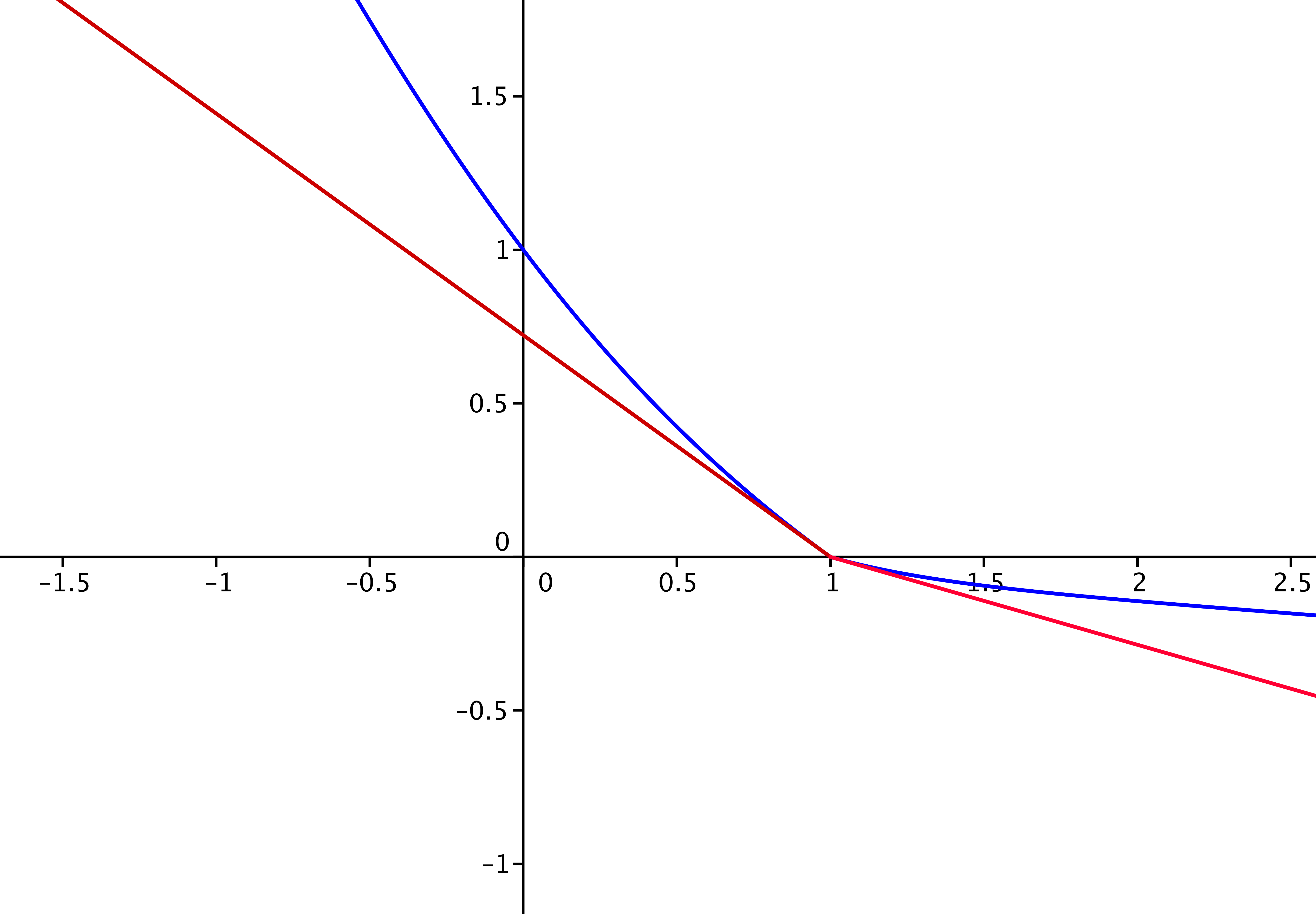}
\captionof{figure}{A case where $\tau'(1)$ does not exist}\label{FIGtau}
\end{center}
We can't ensure in general that $\tau'(1^+)=\tau'(1^-)$. Nevertheless, if $\tau'(1)$ exists, the measure $m$ is uni-dimensional and the following are true. 
\begin{corollary}
Suppose that $\tau'(1)$ exists. Then
\begin{enumerate}
\item $dm$-almost-surely, \quad$\displaystyle\lim_{n\to +\infty}\frac{\log(m(I_n(x)))}{\log\vert I_n(x)\vert}=-\tau'(1)$
\item $\dim\left(E_{-\tau'(1)}\right)=-\tau'(1)$
\item $\diminf(m)=\dimsup(m)=\Diminf(m)=\Dimsup(m)=-\tau'(1).$
\end{enumerate}
\end{corollary}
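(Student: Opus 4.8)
The plan is to read off the three assertions from the preceding Theorem, using Billingsley's lemma to pass between the ``global'' dimensions of $m$ and the ``local'' quantities $\underline d(x)=\liminf_{n\to\infty}\frac{\log m(I_n(x))}{\log|I_n(x)|}$ and $\overline d(x)=\limsup_{n\to\infty}\frac{\log m(I_n(x))}{\log|I_n(x)|}$. Assertion~3 comes first and is the quickest: since $\tau'(1)$ exists, $\tau'(1^-)=\tau'(1^+)=\tau'(1)$, so the Theorem squeezes both $\diminf(m)$ and $\Dimsup(m)$ between $-\tau'(1)$ and $-\tau'(1)$; and since one always has $\diminf(m)\le\dimsup(m)\le\Dimsup(m)$ and $\diminf(m)\le\Diminf(m)\le\Dimsup(m)$ — the middle inequalities because $\dim\le\Dim$ and because a set of full $m$-measure has positive $m$-measure (recall $m\ne0$) — all four dimensions collapse to $-\tau'(1)$. (Note $-\tau'(1)\ge0$ since $\tau$ is non-increasing.)

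The core of the argument is Assertion~1, which I would obtain by showing that the two exceptional sets $A=\{x:\underline d(x)<-\tau'(1)\}$ and $B=\{x:\overline d(x)>-\tau'(1)\}$ are $m$-negligible; as $\underline d\le\overline d$ always, this forces $\lim_{n}\frac{\log m(I_n(x))}{\log|I_n(x)|}=-\tau'(1)$ for $m$-almost every $x$. For $A$, write it as the countable union of the sets $A_k=\{x:\underline d(x)\le-\tau'(1)-1/k\}$; Billingsley's lemma (the covering estimate) bounds $\dim(A_k)$ by $-\tau'(1)-1/k<-\tau'(1)=\diminf(m)$, so no $A_k$ can carry positive $m$-mass, whence $m(A)=0$. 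For $B$, fix $\eps>0$; since $\Dimsup(m)=-\tau'(1)$ there is a Borel set $E$ with $m([0,1]\setminus E)=0$ and $\Dim(E)<-\tau'(1)+\eps$, and the packing version of Billingsley's lemma forbids $E\cap\{\overline d(x)\ge-\tau'(1)+\eps\}$ to carry positive $m$-mass (otherwise $\Dim(E)\ge-\tau'(1)+\eps$); hence $m(\{\overline d\ge-\tau'(1)+\eps\})=0$, and letting $\eps=1/k\to0$ gives $m(B)=0$. I expect this passage from the equalities between global dimensions to the pointwise $m$-a.e.\ statement to be the main obstacle, since it requires both the Hausdorff and the packing forms of Billingsley's lemma (see \cite{Fal97}, \cite{H07}).

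Finally, Assertion~2. By Assertion~1 the set $E_{-\tau'(1)}$ has full, hence positive, $m$-measure, and $\underline d(x)=-\tau'(1)$ on it; Billingsley's lemma (the lower estimate, via a mass-distribution argument) then gives $\dim(E_{-\tau'(1)})\ge-\tau'(1)$. For the opposite inequality, $E_{-\tau'(1)}\subseteq\{x:\underline d(x)\le-\tau'(1)\}$ and Billingsley's lemma (the covering estimate again) bounds the Hausdorff dimension of this larger set by $-\tau'(1)$; hence $\dim(E_{-\tau'(1)})=-\tau'(1)$. To see that this is indeed $\tau^*(-\tau'(1))$, note that $q\mapsto q\beta+\tau(q)$ with $\beta=-\tau'(1)$ is convex with vanishing derivative at $q=1$, so its infimum is attained there and equals $-\tau'(1)+\tau(1)=-\tau'(1)$, which is why the corollary is stated in this form.
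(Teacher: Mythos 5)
Your proof is correct. The paper in fact states this corollary without any proof: it sits in the ``digression'' section as a recollection of known facts, presented as an immediate consequence of the displayed inequality $-\tau'(1^+)\le\diminf(m)\le\Dimsup(m)\le-\tau'(1^-)$ together with the material of \cite{H07} and \cite{Fal97}. Your treatment of assertion 3 is exactly that intended consequence, and your passage from the coincidence of the global dimensions to the pointwise $dm$-almost everywhere statement (assertion 1) and to $\dim\left(E_{-\tau'(1)}\right)=-\tau'(1)$ (assertion 2), via the covering, mass-distribution and packing forms of Billingsley's lemma, is the standard argument the paper is implicitly invoking; the only tacit hypothesis you rightly make explicit is $m\neq 0$, without which the dimensions in (\ref{EQdim}) and (\ref{EQDim}) are not meaningful. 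Your closing observation that $\tau^*(-\tau'(1))=-\tau'(1)$ matches the remark the paper makes immediately after the corollary.
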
 
The equality $\dim\left(E_{-\tau'(1)}\right)=-\tau'(1)$ can be rewritten in terms of the Legendre transform of the function $\tau$. More precisely, if $\beta=-\tau'(1)$, then $\tau^*(\beta)=\beta$ and $\dim(E_\beta)=\tau^*(\beta)$. This is the first step in mutlifractal formalism.

\begin{center}
\includegraphics[scale=0.15]{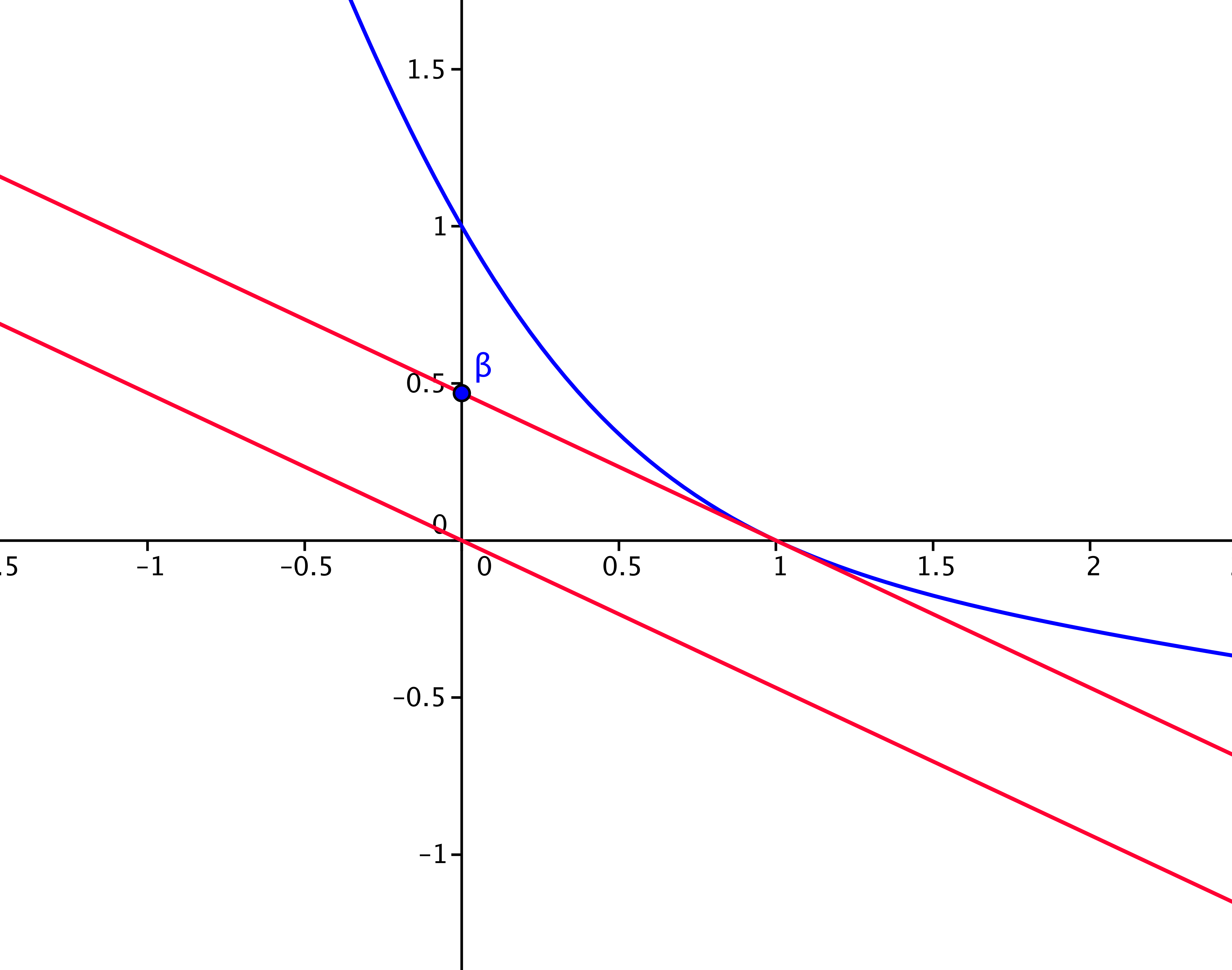}
\captionof{figure}{If $\beta=-\tau'(1)$, then $\tau^*(\beta)=\beta$}\label{FIGlegendre}
\end{center}
In order to obtain the formula $\dim(E_\beta)=\tau^*(\beta)$ for another value of $\beta$, the usual way is to write $\beta=-\tau'(q)$ and to construct an auxiliary measure $m_q$ (known as Gibbs measure) satisfying for any $\ell$-adic interval
$$\frac1C\,m(I)^q\vert I\vert^{\tau(q)}\le m_q(I)\le C\, m(I)^q\vert I\vert^{\tau(q)}.$$
This is the way used in \cite{BMP}. If such a measure $m_q$ exists, its structure function $\tau_q$ is such that 

$$\tau_q(t)=\tau(qt)-t\tau(q).$$
In particular, 
$$-\tau'_q(1)=-q\tau'(q)+\tau(q)=\tau^*(\beta).$$
If we observe that 
$$\frac{\log(m_q(I_n(x)))}{\log\vert I_n(x)\vert}=q\,\frac{\log(m(I_n(x)))}{\log\vert I_n(x)\vert}+\tau(q)+o(1),$$
we can conclude that
$$\dim(E_\beta)=\dim(m_q)=-\tau'_q(1)=\tau^*(\beta).$$
\section{Multifractal analysis of Mandelbrot cascades: an outline}\label{SECmulti}
In this section, we make the following additional assumptions:

\begin{equation}\label{EQsimplification}
\left\{
\begin{aligned}
& P[W=0]=0\\
\null\\
&\mbox{For any real } q,\quad E\left[W^q\right]<+\infty.
\end{aligned}
\right.
\end{equation}
In particular, assumption (\ref{EQsimplification}) is satisfied when $m$ is a log-normal cascade or when $\frac1C\le W\le C$  almost surely.

We can then list some  easy consequences.
\begin{itemize}
\item The function $\tau(q)=\log_\ell(E[W^q])-(q-1)$ is defined on  $\R$, convex and of class $C^\infty$
\item There exists $r>1$ such that $\tau(r)<0$\quad (and so $E[Y_\infty^r]<+\infty$)
\item The cascade is non-degenerate
\item $P[m= 0]=P[Y_\infty=0]=0$.
\end{itemize}

In order to perform the multifractal analysis of the Mandelbrot cascades, we want to mimic the way used for the binomial cascades. It is then natural to introduce the auxiliary cascade $m'$ associated to the weight
$W'=\frac{W^q}{E[W^q]}$ (the renormalization ensures that $E[W']=1$). The structure function of the cascade $m'$ is 
\begin{eqnarray*}
\tau_q(t)&=&\log_\ell( E[W'^t])-(t-1)\\
&=&\log_\ell\left(E\left[\frac{W^{qt}}{E[W^{q}]^t}\right]\right)-(t-1)\\
&=&\log_\ell\left(E\left[W^{qt}\right]\right)-t\log_\ell(E[W^q])-(t-1)\\
&=&\tau(tq)-t\tau(q)
\end{eqnarray*}
In particular, 
$$-\tau'_q(1^-)=-q\tau'(q)+\tau(q)=\tau^*(-\tau'(q))$$
and the cascade $m'$ is non-degenerate if and only if $\tau^*(-\tau'(q))>0$. This suggests to consider the interval
$$(q_{min},q_{max})=\{q\in\R\ ;\ \tau^*(-\tau'(q))>0\}.$$
\begin{example}[The interval $(q_{min},q_{max})$ in the case of log-normal cascades] 
If $m$ is a log-normal cascade, the function $\tau$ is given by
$$\tau(q)=\log_\ell(E[W^q])-(q-1)=\frac{\sigma^2}{2\ln\ell}(q^2-q)-(q-1)$$
and the numbers $q_{min}$ and $q_{max}$ are the solutions of the equation
$$\tau(q)=q\tau'(q).$$
We find
$$q_{min}=-\frac{\sqrt{2\ln \ell}}\sigma\qquad\mbox{and}\qquad q_{max}=\frac{\sqrt{2\ln \ell}}\sigma.$$
\begin{center}
\includegraphics[scale=0.3]{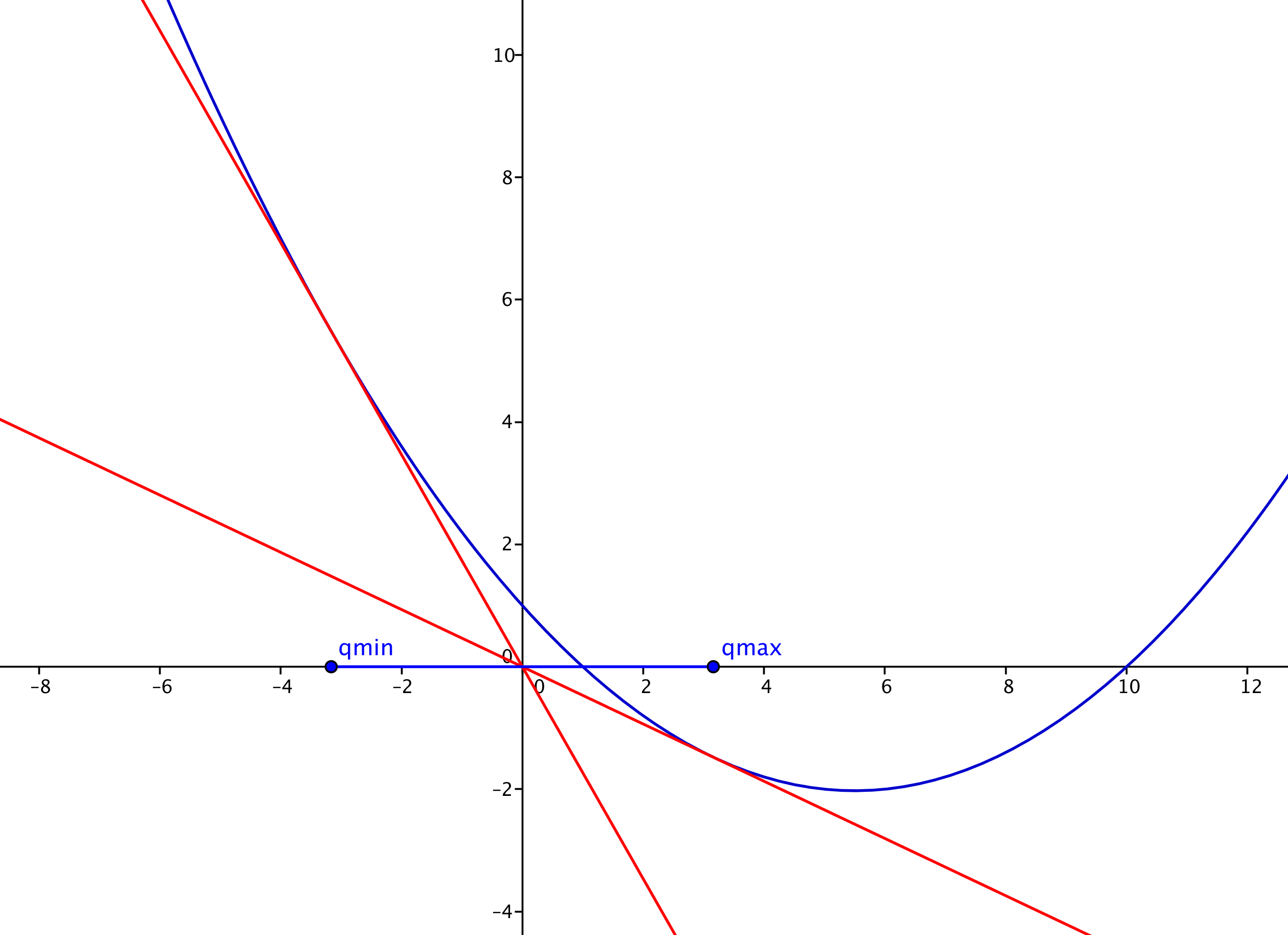}
\captionof{figure}{The interval $(q_{min},q_{max})$}\label{FIGqmin}
\end{center}
\end{example}

When $q\in(q_{min},q_{max})$, we would like to compare the behavior of the cascades $m$ and $m'$. In the following subsection, we give a general result which can be applied to the present situation.
\subsection{Simultaneous behavior of two Mandelbrot cascades}\label{SECsimultaneous}
\begin{theorem}\label{THEOsimultaneous}
Let $(W,W')$ be a random vector such that the Mandelbrot cascades $m$ and $m'$ associated to the weight $W$ et $W'$ are non-degenerate. Suppose that :
\begin{itemize}
\item There exists $r>1$ such that $E[Y_\infty^r]<+\infty$ and $E[Y_\infty^{'r}]<+\infty$
\item There exists $\alpha>0$ such that $E[Y_\infty^{-\alpha}]<+\infty$
\end{itemize}
Then, almost surely,
$$\lim_{n\to +\infty}\frac{\log m(I_n(x))}{\log \vert I_n(x)\vert}=1-E[W'\log_\ell W]\quad dm'-\mbox{almost every where.}$$
\end{theorem}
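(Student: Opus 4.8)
The plan is to follow the scheme of the proof of Theorem \ref{THEOdim}, but carrying the two cascades $m$ and $m'$ simultaneously and using $m'$ as the reference measure for ``almost everywhere'' statements. Concretely, I would set $\widetilde\Omega=\Omega\times[0,1]$ and define $Q'[A]=E[\int\un_A\,dm']$, which is a probability measure since $m'$ is non-degenerate. I would write $f_n=\sum_{\e 1\cdots\e n}W_{\e 1}\cdots W_{\e 1\cdots\e n}\un_{I_{\e 1\cdots\e n}}$ for the density attached to $W$ and $f'_n$ for the one attached to $W'$, and then decompose
$$
\frac{\log m(I_n(x))}{\log|I_n(x)|}=\frac{\log f_n(x)+\log\mu_n(I_n(x))}{-n\log\ell},
$$
where $\mu_n=\frac1{f'_n}m'$, exactly as in Lemma \ref{LEMMA5}; note that here $f_n$ is the ``wrong'' density (it comes from $W$, not from $W'$) but it is still constant on $n$-th generation intervals, so $m(I_n(x))=f_n(x)\,\mu'_n(I_n(x))$ is false — rather one must write $m(I_\eps)=W_{\e 1}\cdots W_{\e 1\cdots\e n}\,\ell^{-n}\,Y_\infty(\eps)$ and relate this to the quantity $\mu_n(I_\eps)$ attached to $m'$. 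The correct bookkeeping is the delicate point and I return to it below.

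The two analytic inputs are: (i) a law of large numbers showing $\frac1n\log f_n(x)\to E[W'\log W]$ for $Q'$-a.e.\ $(\omega,x)$, and (ii) a two-sided estimate showing $\frac1n\log\mu_n(I_n(x))\to-\log\ell$ for $Q'$-a.e.\ $(\omega,x)$. For (i) I would write $f_n=g_1\cdots g_n$ with $g_k=\sum_{\e 1\cdots\e k}W_{\e 1\cdots\e k}\un_{I_{\e 1\cdots\e k}}$ and compute, as in \eqref{EQNlaw}, the law of $(g_1,\dots,g_n)$ under $Q'$: the key computation is $E[\phi(W_\eps)\,m'(I_\eps)]=\ell^{-|\eps|}E[\phi(W)W']$, using $E[W']=1$ and the independence structure, so the $g_k$ are i.i.d.\ under $Q'$ with $\E_{Q'}[\log g_1]=E[W'\log W]$. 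This is where the hypothesis $E[W^{-\text{something}}]$ is implicitly needed: one must know $\log W$ is integrable against $W'\,dP$, which follows because $E[{Y'}_\infty^{\,r}]<\infty$ controls positive moments and $E[Y_\infty^{-\alpha}]<\infty$ (together with the fundamental equation) forces $E[W^{-\alpha'}]<\infty$ for some $\alpha'>0$, hence $E[W'|\log W|]<\infty$. For (ii) I would mimic Lemma \ref{LEMMA5}: the lower bound comes from estimating $\E_{Q'}[(\ell^n\mu_n(I_n(x)))^{-1/2}]$, which reduces after the usual interchange to $E[{Y'}_\infty^{\,1/2}Y_\infty^{-1/2}]$ or a similar mixed moment, finite by Cauchy–Schwarz together with the two moment hypotheses ($r>1$ on one side, $-\alpha<0$ on the other); summing $n^{-2}$ times this and Borel–Cantelli gives $\liminf\ge-\log\ell$. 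The upper bound comes from a Borel–Cantelli argument on $Q'[\ell^n\mu_n(I_n(x))>\alpha^n]$, which reduces to $E[Y_\infty\un_{\{Y_\infty>\alpha^n\}}]$ (or its $W,W'$-mixed analogue) and is summable because $E[Y_\infty^r]<\infty$ for some $r>1$ makes $\sum_n E[Y_\infty\un_{\{Y_\infty>\alpha^n\}}]<\infty$ for $\alpha>1$; letting $\alpha\downarrow1$ finishes it.

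The main obstacle I expect is the correct identification of the auxiliary quantities and their joint laws under $Q'$: unlike in Theorem \ref{THEOdim}, the reference measure $m'$ and the density $f_n$ come from different weights, so one cannot simply write $dm=f_n\,d\mu_n$. One has to introduce, for each $\eps\in\M_n$, the renormalized tail masses $Z_\eps$ of the $m$-cascade below $I_\eps$ and $Z'_\eps$ of the $m'$-cascade below $I_\eps$, note that $(Z_\eps,Z'_\eps)$ is independent of the first $n$ generations of weights and distributed as $(Y_\infty,Y'_\infty)$, and keep track of the coupling between them (they are built from the \emph{same} underlying vectors $(W_\delta,W'_\delta)$ for $\delta$ below $\eps$, hence are \emph{not} independent of each other — which is exactly why the mixed moments $E[Y_\infty^a\,{Y'}_\infty^{\,b}]$ appear). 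Once this is set up cleanly, each estimate is a routine variant of the corresponding step in the proof of Theorem \ref{THEOdim}, and combining (i) and (ii) through the displayed decomposition yields
$$
\lim_{n\to+\infty}\frac{\log m(I_n(x))}{\log|I_n(x)|}=-E[W'\log_\ell W]+1\qquad Q'\text{-a.e.},
$$
which is the claim.
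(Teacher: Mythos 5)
Your plan is essentially the paper's proof: the product probability $Q'$ built from $m'$, the splitting of $\log m(I_n(x))$ into a density part and a flat part, a law of large numbers for the $g_k$ (which are i.i.d.\ under $Q'$ with $\E'[\phi(g_1)]=E[\phi(W)W']$), and control of the flat part through mixed moments of $(Y_\infty,Y'_\infty)$ bounded by H\"older using the two moment hypotheses. However, the point you single out as the ``main obstacle'' is a non-issue, and your proposed remedy ($\mu_n=\frac1{f'_n}m'$) is the wrong one: the paper keeps $\mu_n=\frac1{f_n}m$ exactly as in Lemma \ref{LEMMA5}, so that $dm=f_n\,d\mu_n$ and $m(I_n(x))=f_n(x)\mu_n(I_n(x))$ hold verbatim --- these identities involve only the cascade $m$ and are unaffected by the change of reference measure; the only thing that changes relative to Theorem \ref{THEOdim} is that expectations are taken against $Q'$. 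What you do need, and correctly identify in your last paragraph, is the joint law: $(\mu_n(I_\eps),\mu'_n(I_\eps))$ is distributed as $(\ell^{-n}Y_\infty,\ell^{-n}Y'_\infty)$ and is independent of the first $n$ generations of weights, whence $\E'\left[\left(\ell^n\mu_n(I_n(x))\right)^{-\eta}\right]=E[Y_\infty^{-\eta}Y'_\infty]$.

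Two local corrections. First, your justification of $E[W'\vert\log W\vert]<+\infty$ does not work: the fundamental equation gives $Y_\infty\ge\frac1\ell W_0Y_\infty(0)$ and hence $P[Y_\infty\le\eps]\le P[WY_\infty\le\ell\eps]^\ell$, so $Y_\infty$ has a much \emph{lighter} lower tail than $W$, and $E[Y_\infty^{-\alpha}]<+\infty$ does not force $E[W^{-\alpha'}]<+\infty$ for any $\alpha'>0$. (The paper simply asserts $E[W'\vert\log W\vert]<+\infty$; it is automatic under (\ref{EQsimplification}) in the intended application, but it is not visibly a consequence of the stated hypotheses, so it should be treated as an implicit assumption rather than derived as you attempt.) Second, the exponent $-1/2$ in the lower-bound estimate must be replaced by a small $-\eta$: H\"older with the conjugate exponent $r'$ of $r$ gives $E[Y_\infty^{-\eta}Y'_\infty]\le E[Y_\infty^{-\eta r'}]^{1/r'}E[Y^{'r}_\infty]^{1/r}$, which the hypotheses control only when $\eta r'\le\alpha$, and $\alpha$ may be arbitrarily small; the paper's choices $\eta=\alpha/r'$ (lower bound) and $\eta=r/r'$ (upper bound) make every step go through. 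With these adjustments your argument coincides with the paper's.
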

\begin{proof}
 The ideas are quite similar to those developed in the proof of Theorem \ref{THEOdim}. The probability measure  on the product space $\tilde\Omega=\Omega\times[0,1]$ is now
$$Q'(A)=E\left[\int 1\!\!\!1_A\,dm'\right]$$
and the related expectation is denoted by $\E'$. The notations are the same as in Theorem \ref{THEOdim}. In particular
$$dm= f_n\,d\mu_n,\qquad f_n=g_1\times\cdots\times g_n,\qquad\frac{\log(m(I_n(x))}{\log\vert I_n(x)\vert}=\frac{\log f_n(x)+\log\mu_n(I_n(x))}{-n\log\ell}$$
and we have to prove :
\begin{enumerate}
\item $\frac1n\sum_{j=1}^n\log g_j$ converges to $E[W'\log W]$\quad$dQ'$ almost surely
\item $\frac1n\log\mu_n(I_n(x))$ converges to $-\log\ell$\quad$dQ'$ almost surely.
\end{enumerate}

Step 1 : {\it behavior of $\frac1n\sum_{j=1}^n\log g_j$.}

\noindent
In the same way as in Lemma \ref{LEMMA4}, we have :
$$
\E'[\phi(g_n)]
=\sum_{\eps\in\M n}E\left[\phi(W_{\eps})m'(I_{\eps})\right]
=E[\phi(W)W']
$$
when $\phi$ is a bounded measurable function and the $g_n$ are identically distributed. On the other hand,

\begin{eqnarray*}
\E'\left[\phi_1(g_1)\cdots\phi_n(g_n)\right]&=&
E[\phi_1(W)W']\times\cdots\times E[\phi_n(W)W']\\
&=&\E'\left[\phi_1(g_1)\right]\times\cdots\times\E'\left[\phi_n(g_n)\right]
\end{eqnarray*}
which proves the independance of the random variables $(g_n)$ with respect to $Q'$. Observing that $\E'[\vert \log g_n\vert]=E[W'\vert\log W\vert]<+\infty$, the strong law of large numbers says that
$$\frac1n\sum_{k=1}^n\log g_k\xrightarrow[n\to +\infty]{}E[W'\log W]\quad dQ'-\mbox{almost surely}.$$

Step 2 : {\it behavior of $\frac1n\log\mu_n(I_n(x))$.}

\noindent
Let $\eps=\e 1\cdots\e n\in\m n$ and recall that $m(I_\eps)=W_{\e 1}\cdots W_{\e 1\cdots\e n}\,\mu_n(I_\eps)$. It is easy to see that $\mu_n(I_\eps)$ is independent to $W_{\e 1},\cdots,W_{\e 1\cdots\e n}$ and has the same law as 
$\ell^{-n}Y_\infty$. If we write $m'(I_\eps)=W'_{\e 1}\cdots W'_{\e 1\cdots\e n}\,\mu'_n(I_\eps)$, we can more precisely say that the vector $(m(I_\eps),m'(I_\eps))$ and is identically distributed to $(\ell^{-n}Y_\infty,\ell^{-n}Y'_\infty)$ and independent to $W_{\e 1},\cdots,W_{\e 1\cdots\e n},W'_{\e 1},\cdots,W'_{\e 1\cdots\e n}$. It follows that
\begin{eqnarray*}
\E'\left[\left(\ell^n\mu_n(I_n(x))\right)^{-\eta}\right]&=&E\left[\ell^{-n\eta}\int\mu_n(I_n(x))^{-\eta}dm'(x)\right]\\
&=&\ell^{-n\eta}\sum_{\eps\in\M_n} E\left[\mu_n(I_\eps)^{-\eta}m'(I_\eps)\right]\\
&=&\ell^{-n\eta}\sum_{\eps\in\M_n} E\left[W'_{\e 1}\cdots W'_{\e 1\cdots\e n}\right]E\left[\mu_n(I_\eps)^{-\eta}\mu'_n(I_\eps)\right]\\
&=&E[Y_\infty^{-\eta}Y'_\infty]\\
&\le&E[Y_\infty^{-\eta r'}]^{1/r'}E\left[Y^{'r}_\infty\right]^{1/r}
\end{eqnarray*}
where $r'$ is such that $\frac1{r}+\frac1{r'}=1$. If we choose $\eta$ such that $\eta r'=\alpha$, we get 
$$\E'\left[\sum_{n=1}^{+\infty}\frac1{n^2}\left(\ell^n\mu_n(I_n(x))\right)^{-\eta}\right]<+\infty$$
and we can conclude as in Lemma \ref{LEMMA4} that
$$\mbox{almost surely,}\quad \liminf_{n\to +\infty}\frac{\log\left(\ell^n\mu_n(I_n(x))\right)}n\ge 0\quad dm'-\mbox{almost every where}.$$
In the same way,
\begin{eqnarray*}
\E'\left[\left(\ell^n\mu_n(I_n(x))\right)^{\eta}\right]&=&E[Y_\infty^{\eta}Y'_\infty]\\
&\le& E[Y_\infty^{\eta r'}]^{1/r'}E\left[Y^{'r}_\infty\right]^{1/r}
\end{eqnarray*}
which is finite and independent of $n$  if we choose  $\eta$ such that $\eta r'=r$. We can then conclude that
$$\mbox{almost surely,}\quad \limsup_{n\to +\infty}\frac{\log\left(\ell^n\mu_n(I_n(x))\right)}n\le 0\quad dm'-\mbox{almost every where}.$$
\end{proof}
\subsection{Application to the multifractal analysis of Mandelbrot cascades}
If we apply Theorem \ref{THEOsimultaneous} to the case where $W'=\frac{W^q}{E[W^q]}$, we obtain the following result on multifractal analysis of Mandelbrot cascades.
\begin{theorem}\label{THEOmultif}
Let $m$ be a Mandelbrot cascade associated to a weight $W$. Suppose that \em{(\ref{EQsimplification})} is satisfied and define $q_{min}$ and $q_{max}$ as above. Let $\beta=-\tau'(q)$ with $q\in(q_{min},q_{max})$. Then
$$\dim(E_\beta)\ge\tau^*(\beta)$$
where
$$E_\beta=\left\{x\ ;\ \lim_{n\to \infty}\frac{\log m(I_n(x))}{\log\vert I_n(x)\vert}=\beta\right\}.$$
\end{theorem}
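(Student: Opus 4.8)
The plan is to deduce Theorem~\ref{THEOmultif} from Theorem~\ref{THEOsimultaneous}, applied to the random vector $(W,W')$ with $W'=W^q/E[W^q]$; as computed just above the statement, the cascade $m'$ attached to $W'$ has structure function $\tau_q(t)=\tau(tq)-t\tau(q)$. First I would check the three hypotheses of Theorem~\ref{THEOsimultaneous}. Non-degeneracy of $m$ is part of~(\ref{EQsimplification}); non-degeneracy of $m'$ holds because, by Theorem~\ref{THEOequi} applied to $W'$, it is equivalent to $\tau_q'(1)<0$, i.e. to $-\tau_q'(1)=-q\tau'(q)+\tau(q)=\tau^*(-\tau'(q))>0$, which is precisely the hypothesis $q\in(q_{min},q_{max})$. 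For the positive-moment condition I would note that $\tau$ and $\tau_q$ are $C^\infty$, vanish at $1$, and have negative derivative at $1$ (for $\tau$ because $m$ is non-degenerate, for $\tau_q$ by the preceding line), so $\tau(r)<0$ and $\tau_q(r)<0$ for a common $r>1$, whence $E[Y_\infty^r]<+\infty$ and $E[(Y'_\infty)^r]<+\infty$ by Theorem~\ref{THEOmoments}. The remaining hypothesis, $E[Y_\infty^{-\alpha}]<+\infty$ for some $\alpha>0$, is Proposition~\ref{PROPnegative}.

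Granting these, Theorem~\ref{THEOsimultaneous} yields that, almost surely,
\[
\lim_{n\to+\infty}\frac{\log m(I_n(x))}{\log|I_n(x)|}=1-E[W'\log_\ell W]\qquad dm'\text{-almost everywhere.}
\]
The next step is to identify the right-hand side with $\beta$: differentiating $\tau(q)=\log_\ell E[W^q]-(q-1)$ gives $E[W^q\log_\ell W]/E[W^q]=\tau'(q)+1$, hence $1-E[W'\log_\ell W]=-\tau'(q)=\beta$. Therefore, almost surely, $m'$ is carried by the level set $E_\beta$; and since $P[W'=0]=P[W=0]=0$, the cascade $m'$ is almost surely nonzero, so $m'(E_\beta)=m'([0,1])>0$ almost surely.

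Finally I would convert this into a dimension estimate. Since $W'$ itself satisfies~(\ref{EQsimplification}), one has $E[Y'_\infty\log Y'_\infty]<+\infty$ (from the $L^r$ bound just obtained) and $E[Y'_\infty\log Y'_\infty]>0$ (excluding the trivial case where $W$, hence $W'$, is almost surely constant), so Corollary~\ref{COROdim} applies to the cascade $m'$: in particular every Borel set $F$ with $m'(F)>0$ satisfies $\dim(F)\ge 1-E[W'\log_\ell W']=-\tau_q'(1)=-q\tau'(q)+\tau(q)=\tau^*(\beta)$. Taking $F=E_\beta$ gives $\dim(E_\beta)\ge\tau^*(\beta)$, as claimed. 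The step I expect to be most delicate is the verification of the hypotheses of Theorem~\ref{THEOsimultaneous} — producing a single exponent $r>1$ valid for both cascades at once and, above all, invoking the negative-moment estimate of Proposition~\ref{PROPnegative}; once those are secured, the remainder is the bookkeeping above together with the elementary differentiation identity for $\tau'$.
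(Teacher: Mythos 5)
Your proposal is correct and follows essentially the same route as the paper: apply Theorem~\ref{THEOsimultaneous} to $(W,W')$ with $W'=W^q/E[W^q]$, check the moment hypotheses via $\tau'(1)<0$, $\tau_q'(1)=-\tau^*(\beta)<0$ and Proposition~\ref{PROPnegative}, identify $1-E[W'\log_\ell W]=-\tau'(q)=\beta$, and conclude $\dim(E_\beta)\ge\dim(m')=-\tau_q'(1)=\tau^*(\beta)$. Your extra care in justifying $0<E[Y'_\infty\log Y'_\infty]<+\infty$ before invoking Corollary~\ref{COROdim} is a detail the paper leaves implicit, but it is the same argument.
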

\begin{proof}
 As suggested at the beginning of Section \ref{SECmulti}, let $W'=\frac{W^q}{E[W^q]}$. The condition $q\in(q_{min},q_{max})$ ensures that the associated cascade $m'$ is non-degenerate. More precisely, observing that $\tau'(1)<0$ and $\tau'_q(1)=-\tau^*(-\tau'(q))=-\tau^*(\beta)<0$, we can find $r>1$ such that $E[Y_\infty^r]<+\infty$ and $E[Y_\infty^{'r}]<+\infty$. Finally, all the hypotheses of Theorem \ref{THEOsimultaneous} are satisfied. Observe that 
$$1-E[W'\log_\ell W]=1-E\left[\frac{W^q}{E[W^q]}\log_\ell W\right]=-\tau'(q)=\beta.$$
The conclusion of Theorem \ref{THEOsimultaneous} says that almost surely, the set $E_\beta$ is of full measure $m'$. It follows that
$$
\dim(E_\beta)\ge\dim(m')
=-\tau'_q(1)=-q\tau'(q)+\tau(q)=\tau^*(-\tau'(q))=\tau^*(\beta).
$$
\end{proof}
\subsection{To go further}
It is natural to ask if the inequality proved in Theorem \ref{THEOmultif} is an equality. Indeed we know that the inequality 
$$\dim(E_\beta)\le \tilde\tau^*(\beta)\quad\mbox{where}\quad
\tilde\tau(q)=\limsup_{n\to+\infty}\frac1n\log_\ell\left(\sum_{I\in\f n}m(I)^q\right)$$
is always true (see for example \cite{BMP}). 

Our goal is then to compare the convex functions $\tau$ and $\tilde\tau$. Such a comparison can be deduced from the existence of negative moments for the random variable $Y_\infty$.
\begin{proposition}[Existence of negative moments]\label{PROPnegative}
Suppose that \em{(\ref{EQsimplification})} is satisfied. Then, for any $\alpha>0$,  $E\left[Y_\infty^{-\alpha}\right]<+\infty$.
\end{proposition}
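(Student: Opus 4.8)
The plan is to exploit the fundamental equation $\ell Y_\infty=\sum_{j=0}^{\ell-1}W_jY_\infty(j)$ together with the positivity assumption $P[W=0]=0$ and the availability of all positive moments of $W$. The natural device for negative moments is the Laplace transform: set $L(t)=E[e^{-tY_\infty}]$ for $t\ge 0$, and recall that $E[Y_\infty^{-\alpha}]<+\infty$ for all $\alpha>0$ is equivalent (up to the Gamma-function identity $Y_\infty^{-\alpha}=\frac{1}{\Gamma(\alpha)}\int_0^\infty t^{\alpha-1}e^{-tY_\infty}\,dt$) to $L(t)$ decaying faster than any negative power of $t$ as $t\to+\infty$, i.e. $L(t)=O(t^{-\alpha})$ for every $\alpha$. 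So the goal becomes: prove super-polynomial decay of $L$ at infinity.

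First I would derive the functional inequality satisfied by $L$. Conditioning on $W_0,\dots,W_{\ell-1}$ and using independence of the $Y_\infty(j)$, the fundamental equation gives
\begin{equation*}
L(t)=E\!\left[\prod_{j=0}^{\ell-1}L\!\left(\tfrac{t}{\ell}W_j\right)\right].
\end{equation*}
Now the idea is to bootstrap: starting from the trivial bound $L(t)\le 1$, feed it into the right-hand side to get a first genuine decay estimate, then iterate. The key point is that $P[W=0]=0$ means $W$ is bounded below away from $0$ with high probability: for a suitable small $w_0>0$, $p_0:=P[W\ge w_0]>0$. On the event that at least one of the $W_j$ (say, by symmetry one can even ask all of them, but one suffices) exceeds $w_0$, we have $L(\tfrac t\ell W_j)\le L(\tfrac{t w_0}{\ell})$, and on the complementary event we bound the product by $1$. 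This yields a recursive inequality of the shape $L(t)\le (1-p_0)+p_0\,L(c t)$ with $c=w_0/\ell<1$ — but that only gives boundedness, not decay, so a genuine gain must come from iterating the full product of $\ell$ factors and using that $L$ is small once we know it is small somewhere. The cleaner route is: from $L\le 1$ and the product structure, one first shows $L(t)\to 0$ as $t\to\infty$ (e.g. by dominated convergence in the fixed-point equation, since $Y_\infty>0$ a.s. forces $e^{-tY_\infty}\to 0$), then one shows that plugging a bound $L(s)\le \varphi(s)$ into the equation produces $L(t)\le E[\prod_j\varphi(\tfrac t\ell W_j)]$, and with $\ell\ge 2$ factors and all moments of $W$ finite one can close an induction showing $L(t)\le C_\alpha t^{-\alpha}$ for each $\alpha$, the integer exponents being handled one at a time and then interpolated.

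The main obstacle, and the step I would spend the most care on, is making the bootstrap actually \emph{gain} a power at each stage rather than merely reproducing the same rate: one needs that the product of $\ell\ge 2$ copies of a rate-$\alpha$ bound, after taking the expectation over the $W_j$ and rescaling, yields rate $\ell\alpha$ (or at least a strictly larger rate), which requires controlling $E[W^{-\alpha\ell}\cdot(\text{indicator of the bad set})]$-type quantities — and here is exactly where $P[W=0]=0$ combined with \emph{all positive} moments of $W$ (equivalently, the finiteness built into (\ref{EQsimplification})) is used, since one ends up needing $E[W^{q}]<\infty$ for large $q$ on the favorable events and a sharp enough lower tail estimate $P[W\le \epsilon]\to 0$ to kill the unfavorable ones. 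An alternative to the Laplace-transform bootstrap, which I would keep in reserve, is a direct approach through the finite-generation approximations $Y_n$: show $E[Y_n^{-\alpha}]$ is bounded in $n$ using the recursion $\ell Y_{n+1}=\sum_j W_jY_n(j)\ge \ell\,\min_j(W_jY_n(j))$ — but this loses too much, so the convexity/Jensen route via $(\ell Y_{n+1})^{-\alpha}\le \ell^{-1}\sum_j (W_jY_n(j))^{-\alpha}$ only works when it does not blow up, i.e. it again reduces to controlling $E[W^{-\alpha}]$, which is false in general; hence the Laplace transform really is the right tool, since it automatically encodes \emph{all} negative moments at once and turns the $\ell$-fold product into a genuine contraction of the decay rate.
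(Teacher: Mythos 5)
Your overall strategy coincides with the paper's: pass to the Laplace transform $F(t)=E[e^{-tY_\infty}]$, derive from the fundamental equation the duplication formula
\begin{equation*}
F(\ell t)=\Bigl(\int_0^{+\infty}F(tw)\,dP_W(w)\Bigr)^{\ell},
\end{equation*}
reduce the finiteness of all negative moments to the decay $F(t)=O(t^{-\alpha})$ for every $\alpha$, and exploit the $\ell\ge 2$-fold product to bootstrap the decay rate. However, the step you yourself single out as ``the main obstacle'' --- making each iteration actually gain a power --- is exactly the content of the proof, and you leave it as a claim (``one can close an induction''). The paper closes it as follows: keeping only a square in the duplication formula and splitting the integral on the event $\{W\le\ell u\}$ gives, for every $t>0$ and $u\in(0,1]$, the inequality $F(t)\le 2P[W\le\ell u]^2+2F(tu)^2$; the hypothesis (\ref{EQsimplification}) gives $E[W^{-\beta}]<+\infty$ for every $\beta>0$ (it assumes $E[W^q]<+\infty$ for \emph{all real} $q$), hence $P[W\le\ell u]\le Cu^{\beta}$ by Markov; choosing $u=1/t$ then yields $\psi(t^2)\le Kt^{-2\beta}+2\psi(t)^2$, and an elementary lemma propagates a bound $\psi(t)\le\frac14 t^{-\alpha}$ ($\alpha<\beta$) from a seed interval $[t_0,t_0^2]$ to all large $t$ along the doubling sequence $t_{n+1}=t_n^2$. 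The seed exists because $F(t)\to 0$, which uses $Y_\infty>0$ a.s., i.e. $P[W=0]=0$ together with non-degeneracy. Without this quantitative mechanism your plan is not yet a proof.

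Two further points where your narrative misplaces the hypotheses. First, what drives the argument is the polynomial smallness of the lower tail $P[W\le u]=O(u^{\beta})$, i.e. the \emph{negative} moments of $W$, not the positive ones; $P[W=0]=0$ alone only gives $P[W\le u]\to 0$ with no rate, and that is not enough (a tail like $1/\log(1/u)$ would wreck the bootstrap). Your alternative ``rate $\alpha$ becomes rate $\ell\alpha$'' iteration via $E\bigl[\prod_j W_j^{-\alpha}\bigr]$ likewise requires $E[W^{-\alpha}]<+\infty$ and, more importantly, a strictly positive rate to start from --- and producing that initial rate is again the lower-tail estimate, so it cannot be dispensed with. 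Second, your side remark that the Jensen route $E[Y_{n+1}^{-\alpha}]\le E[W^{-\alpha}]E[Y_n^{-\alpha}]$ fails ``because $E[W^{-\alpha}]$ is false in general'' is off under (\ref{EQsimplification}): that moment is finite here; the route fails because $E[W^{-\alpha}]\ge 1$ by Jensen, so the recursion grows geometrically and gives no uniform bound.
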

\begin{proof}
The argument is developed for example in \cite{B99} or \cite{K91}. Let 
$$F(t)=E\left[e^{-tY_\infty}\right]$$ 
be the generating function of $Y_\infty$. The fundamental equation 
$ \ell Y_\infty=\sum_{j=0}^{\ell-1}W_jY_\infty(j)$ gives the following duplication formula :
\begin{equation}\label{EQdupli}
F(\ell t)=\left(\int_0^{+\infty}F(tw)\,dP_W(w)\right)^\ell.
\end{equation}
We claim that it is sufficient to prove that for any $\alpha>0$, $F(t)=O(t^{-\alpha})$ when $t\to+\infty$. Indeed, if it is the case, 
\begin{eqnarray*}
 P\left[Y_\infty\le t^{-1}\right]=P\left[e^{-tY_\infty}\ge e^{-1}\right]\le eF(t)=O(t^{-\alpha})
\end{eqnarray*}
and we can conclude that 
$$E\left[Y_\infty^{-\alpha'}\right]=\int_0^{+\infty}P\left[Y_\infty^{-\alpha'}\ge t\right]dt=\int_0^{+\infty}P\left[Y_\infty\le t^{-1/\alpha'}\right]dt<+\infty$$
for any $\alpha'<\alpha$.

Let us now observe that (\ref{EQdupli}) gives for any $t>0$ and any $u\in(0,1]$,
\begin{eqnarray*}
 F(t)&\le&\left(\int_0^{+\infty}F\left((t\ell^{-1})w\right)\,dP_W(w)\right)^2\\
&\le& \left(P[W\le \ell u] +F(tu)\right)^2\\
&\le& 2P[W\le \ell u]^2+2F(tu)^2.
\end{eqnarray*}
Moreover, for any $\beta>0$, assumption (\ref{EQsimplification}) ensures that
$$P\left[ W\le\ell u\right]=P\left[W^{-\beta}\ge(\ell u)^{-\beta}\right]\le(\ell u)^\beta E\left[W^{-\beta}\right]=Cu^\beta .$$
Proposition \ref{PROPnegative} is then a consequence of the following elementary lemma.
\begin{lemma}
 Let $\beta>0$ and $\psi\ :\ [0,+\infty)\to [0,+\infty)$ a continuous function such that 
$$\displaystyle\lim_{t\to +\infty}\psi(t)=0.$$
Suppose that there exists $K>0$ such that for any $t>0$ and any $u\in(0,1]$,
\begin{equation}\label{EQpsi}
\psi(t)\le Ku^{2\beta}+2\psi(tu)^2.
\end{equation}
Then, for any $\alpha<\beta$, $\psi(t)=O(t^{-\alpha})$ when $t\to+\infty$.
\end{lemma}
\begin{proof}
Let $\alpha<\beta$ and $t_0>1$ such that $4Kt_0^{2(\alpha-\beta)}+\frac12\le 1$. Let $\lambda >1$ such that 
$$\mbox{for any } t\in\left[t_0,t_0^2\right],\quad\psi(\lambda t)\le\frac1{4t^\alpha}.$$ 
Define $\psi_\lambda(t)=\psi(\lambda t)$. Equation (\ref{EQpsi}) remains true if we replace $\psi$ by $\psi_\lambda$. Moreover, if $u=\frac1t$ we get
$$\psi_\lambda(t^2)\le Kt^{-2\beta}+2\psi_\lambda(t)^2.$$ 
If $t\in\left[t_0,t_0^2\right]$, we obtain
\begin{eqnarray*}
 \psi_\lambda(t^2)&\le&K t^{-2\beta}+2\left(\frac1{4t^\alpha}\right)^2\\
&=&\frac1{4t^{2\alpha}}\left[4Kt^{2(\alpha-\beta)}+\frac12\right]\\
&\le&\frac1{4(t^2)^\alpha}.
\end{eqnarray*}
Define the sequence $(t_n)$ by $t_{n+1}=t_n^2$. Using the same argument, we obtain step by step
$$\mbox{for any }n\ge 0,\quad\mbox{for any }t\in [t_n,t_{n+1}],\qquad \psi_\lambda(t)\le\frac1{4t^\alpha}$$
and the conclusion follows.

\end{proof}
\end{proof}
\begin{corollary}\label{COROtau} Suppose that \em{(\ref{EQsimplification})} is satisfied. Then,
$$\mbox{almost surely,\quad  for any }q\in\R,\qquad\tilde\tau(q)\le\tau(q).$$
\end{corollary}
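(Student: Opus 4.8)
The plan is to reduce the whole statement to a first-moment estimate for the random sums $S_n(q):=\sum_{I\in\f n}m(I)^q$. First I would exploit the factorization already used in the proof of Lemma~\ref{LEMMA5}: for $\eps=\e 1\cdots\e n\in\m n$ one has $m(I_\eps)=W_{\e 1}\cdots W_{\e 1\cdots\e n}\,\mu_n(I_\eps)$, where $\mu_n(I_\eps)$ is independent of $W_{\e 1},\dots,W_{\e 1\cdots\e n}$ and has the same law as $\ell^{-n}Y_\infty$. Linearity of expectation then gives, for every $q$ such that $E[Y_\infty^q]<+\infty$,
$$
E[S_n(q)]=\sum_{\eps\in\m n}E\big[(W_{\e 1}\cdots W_{\e 1\cdots\e n})^q\big]\,E\big[\mu_n(I_\eps)^q\big]=\ell^n\,E[W^q]^n\,\ell^{-nq}\,E[Y_\infty^q]=\ell^{\,n\tau(q)}\,E[Y_\infty^q].
$$
Under (\ref{EQsimplification}) the factor $E[W^q]$ is finite for every real $q$, and I claim $E[Y_\infty^q]<+\infty$ for every $q\in(-\infty,q_c)$, where $q_c:=\sup\{q>1\ ;\ \tau(q)<0\}\in(1,+\infty]$: Proposition~\ref{PROPnegative} gives it for $q\le 0$, the bound $x^q\le 1+x$ gives it for $0\le q\le 1$, and Theorem~\ref{THEOmoments} gives it for $1<q<q_c$ (that set is nonempty by (\ref{EQsimplification}), and it is an interval because $\tau$ is convex with $\tau(1)=0$ and $\tau'(1^-)<0$).

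Next, for a fixed $q<q_c$ and $\delta>0$, Markov's inequality yields $P[S_n(q)\ge\ell^{\,n(\tau(q)+\delta)}]\le\ell^{-n\delta}\,E[Y_\infty^q]$, a summable series; Borel--Cantelli then gives $\tilde\tau(q)\le\tau(q)+\delta$ almost surely. Letting $\delta$ run through $\{1/k\}_{k\ge1}$ and then intersecting over all rational $q<q_c$ produces a single almost sure event on which $\tilde\tau(q)\le\tau(q)$ holds for every rational $q<q_c$. (Working with rational arguments is what makes the union over $q$ countable.)

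To promote this to all $q\in\R$ I would use two deterministic properties of $\tilde\tau$, valid on the almost sure event $\{0<Y_\infty<+\infty\}$. First, $\tilde\tau$ is convex, being a $\limsup$ of the convex functions $q\mapsto\frac1n\log_\ell\sum_{I\in\f n}m(I)^q$ (each a logarithm of a finite sum of exponentials $e^{q\log m(I)}$). Second, $\tilde\tau$ is non-increasing, because $m(I)\le Y_\infty$ forces $S_n(q')\le\max(1,Y_\infty)^{\,q'-q}S_n(q)$ whenever $q<q'$. For a non-rational $q<q_c$, squeeze $q$ between rationals $a<q<b$ in $(-\infty,q_c)$ and let $a\uparrow q$, $b\downarrow q$: convexity of $\tilde\tau$ and continuity of $\tau$ give $\tilde\tau(q)\le\tau(q)$. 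Finally, if $q_c<+\infty$ and $q\ge q_c$, fix any rational $q_0\in(1,q_c)$; then $\tilde\tau(q)\le\tilde\tau(q_0)\le\tau(q_0)<0\le\tau(q)$, the last inequality because $\{q>1\ ;\ \tau(q)<0\}=(1,q_c)$ by convexity of $\tau$. This yields $\tilde\tau(q)\le\tau(q)$ for all $q\in\R$ on one almost sure event, which is the assertion.

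The hard part is really the range $q\ge q_c$ (when $q_c<+\infty$), where $E[Y_\infty^q]=+\infty$ so that the first-moment method collapses; the escape is not an analytic refinement but the purely deterministic monotonicity of $\tilde\tau$, which transports the bound from a single point $q_0<q_c$ where the first moment is still finite. The other place where some care is needed --- and the reason Proposition~\ref{PROPnegative} is invoked --- is the regime $q<0$: there $E[Y_\infty^q]$ would be infinite without the negative-moment bound, and the first-moment estimate above would say nothing.
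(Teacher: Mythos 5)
Your proof is correct and follows essentially the same route as the paper: the identity $E\bigl[\sum_{\eps\in\m n}m(I_\eps)^q\bigr]=\ell^{n\tau(q)}E[Y_\infty^q]$, the same three-way case split for the finiteness of $E[Y_\infty^q]$ (Proposition \ref{PROPnegative} for $q<0$, Theorem \ref{THEOmoments} for $1<q<q_0$), and a first-moment/Borel--Cantelli bound for each fixed $q$ below the critical exponent. The only cosmetic differences are that you make explicit the passage from countably many $q$ to all $q$ (via convexity and monotonicity of $\tilde\tau$, which the paper compresses into one sentence about continuity of convex functions) and that for $q\ge q_0$ you use monotonicity of $\tilde\tau$ where the paper directly observes $\tilde\tau(q)\le 0\le\tau(q)$; both are valid.
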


\begin{proof} 
Using the continuity of the convex functions $\tilde\tau$ ans $\tau$, it is sufficient to prove that for any $q\in\R$, almost surely, $\tilde\tau(q)\le\tau(q)$. Let
$$q_0=\sup\{ q>1\ ;\ \tau(q)<0\}.$$
It is possible that $q_0=+\infty$. Nevertheless, if $q_0<+\infty$ and if $q\ge q_0$, we obviously have $\tilde\tau(q)\le 0\le\tau(q)$. 

We can now suppose that $q<q_0$ and we claim that 
\begin{equation}\label{EQtruc}
E\left[Y_\infty^q\right]<+\infty.
\end{equation} 
Indeed, the case $q<0$ is due to Proposition \ref{PROPnegative}, the case $0\le q\le 1$ is obvious  and the case $1<q< q_0$ is due to Theorem \ref{THEOmoments}.

Let $\eps=\e 1\cdots\e n\in\m n$. As observed in Lemma \ref{LEMMA5}, we have 
$$m(I_\eps)=W_{\e 1}\cdots W_{\e 1\cdots\e n}\,\mu_n(I_\eps).$$
where $\mu_n(I_\eps)$ is independent to $W_{\eps_1},\cdots,W_{\e 1\cdots\e n}$ and has the same distribution as $\ell^{-n}Y_\infty$. It follows that
\begin{eqnarray*}
 E\left[\sum_{\eps\in\m n}m(I_\eps)^q\right]&=&\sum_{\eps\in\m n}E\left[W_{\e 1}^q\cdots W_{\e 1\cdots\e n}^q\,\mu_n(I_\eps)^q\right]\\
&=&\ell^nE\left[W^q\right]^n\ell^{-nq}E\left[Y_\infty^q\right]\\
&=&\ell^{n\tau(q)}E\left[Y_\infty^q\right].
\end{eqnarray*}
Let $t>\tau(q)$. In view of (\ref{EQtruc}) we get 
$$
E\left[\sum_{n\ge 1}\ell^{-nt}\sum_{\eps\in\m n}m(I_\eps)^q\right]=\sum_{n\ge 1}\ell^{-nt}\ell^{n\tau(q)}E\left[Y_\infty^q\right]<+\infty.
$$
It follows that almost surely, $\sum_{\eps\in\m n}m(I_\eps)^q\le \ell^{nt}$ if $n$ is large enough and we can conclude that $\tilde\tau(q)\le t$ almost surely. This gives the conclusion.
\end{proof}
We can now prove the following result.
\begin{theorem}\label{THEOegal}Suppose that \em{(\ref{EQsimplification})} is satisfied. Then, 
for any  $\beta\in(-\tau'(q_{max}),-\tau'(q_{min}))$,
$$\dim(E_\beta)=\tau^*(\beta)\quad\mbox{almost surely}.$$
\end{theorem}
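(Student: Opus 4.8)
The plan is to combine the lower bound already obtained in Theorem~\ref{THEOmultif} with a matching upper bound coming from the comparison of the two structure functions established in Corollary~\ref{COROtau}. Concretely, for $\beta=-\tau'(q)$ with $q\in(q_{min},q_{max})$, Theorem~\ref{THEOmultif} gives $\dim(E_\beta)\ge\tau^*(\beta)$ almost surely. For the reverse inequality I would invoke the general (deterministic) upper bound recalled in Section~\ref{SECdigression}: for \emph{any} measure $m$ of finite total mass on $[0,1]$ one has $\dim(E_\beta)\le\tilde\tau^*(\beta)$, where $\tilde\tau(q)=\limsup_{n}\frac1n\log_\ell\big(\sum_{I\in\f n}m(I)^q\big)$ is the sample structure function. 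By Corollary~\ref{COROtau}, almost surely $\tilde\tau(q)\le\tau(q)$ for every $q\in\R$, hence $\tilde\tau^*(\beta)\le\tau^*(\beta)$ for every $\beta$. Putting the two bounds together yields $\dim(E_\beta)=\tau^*(\beta)$ almost surely, for each fixed $\beta$ in the prescribed range.

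\textbf{From ``for each $\beta$'' to ``for all $\beta$ simultaneously''.} The delicate point is the order of quantifiers: the argument above produces, for each fixed $\beta$, an event of probability one on which $\dim(E_\beta)=\tau^*(\beta)$, but the statement as written should be read for each fixed $\beta$ (which is exactly what is claimed here — ``for any $\beta$, \dots\ almost surely''), so in fact no uniformization over $\beta$ is needed for the literal statement. If one did want the stronger ``almost surely, for all $\beta$'' version, I would first note that Corollary~\ref{COROtau} already holds in the strong form (almost surely, for all $q$, $\tilde\tau(q)\le\tau(q)$), so the upper bound is uniform; the lower bound would then require running the auxiliary-cascade construction of Theorem~\ref{THEOsimultaneous} simultaneously for a countable dense set of parameters $q$ and using convexity/continuity of $\tau^*$ and a monotonicity argument on the level sets to fill in the remaining $\beta$. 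For the present statement, though, the fixed-$\beta$ reasoning suffices.

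\textbf{Why $\beta$ ranges over $(-\tau'(q_{max}),-\tau'(q_{min}))$.} I would point out that the interval of admissible exponents is precisely the image of $(q_{min},q_{max})$ under the decreasing map $q\mapsto-\tau'(q)$: since $\tau$ is $C^\infty$ and strictly convex (under assumption~(\ref{EQsimplification}), because $W$ is non-degenerate), $\tau'$ is strictly increasing, so $-\tau'$ carries $(q_{min},q_{max})$ bijectively onto $(-\tau'(q_{max}),-\tau'(q_{min}))$. Thus every $\beta$ in this open interval is of the form $-\tau'(q)$ with a unique $q\in(q_{min},q_{max})$, and the hypotheses of Theorem~\ref{THEOmultif} apply. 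This is the only place where the defining property of $q_{min},q_{max}$ — namely $\tau^*(-\tau'(q))>0$, equivalently non-degeneracy of the auxiliary cascade $m'$ associated to $W'=W^q/E[W^q]$ — is used.

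\textbf{Main obstacle.} The substantive content is not in the present theorem but in the two ingredients it cites: the lower bound (Theorem~\ref{THEOmultif}, resting on the simultaneous-behavior result Theorem~\ref{THEOsimultaneous} and on the existence of negative moments, Proposition~\ref{PROPnegative}) and the upper bound via Corollary~\ref{COROtau} (again resting on Proposition~\ref{PROPnegative} for $q<0$ and Theorem~\ref{THEOmoments} for $q>1$). Given those, the proof of Theorem~\ref{THEOegal} itself is a short matching-bounds argument; the only thing to be careful about is to state clearly that the two almost-sure events (one from the lower bound, one from the upper bound) are intersected, which is legitimate since a finite — indeed countable — intersection of probability-one events has probability one.
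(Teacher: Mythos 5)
Your proposal is correct and follows exactly the paper's own argument: the chain $\tau^*(\beta)\le\dim(E_\beta)\le\tilde\tau^*(\beta)\le\tau^*(\beta)$, with the lower bound from Theorem~\ref{THEOmultif} and the upper bound from the general inequality $\dim(E_\beta)\le\tilde\tau^*(\beta)$ combined with Corollary~\ref{COROtau}. Your added remarks on the order of quantifiers and on the parametrization $\beta=-\tau'(q)$ are accurate but not needed beyond what the paper does.
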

Indeed Theorem \ref{THEOmultif} and  Corollary \ref{COROtau} ensure that for any $\beta\in(-\tau'(q_{max}),-\tau'(q_{min}))$,
$$\tau^*(\beta)\le\dim(E_\beta)\le\tilde\tau^*(\beta)\le\tau^*(\beta)$$
which gives the conclusion of Theorem \ref{THEOegal}.

\begin{remark}
 The proof of Theorem \ref{THEOegal} shows that 
$$\tau^*(\beta)=\tilde\tau^*(\beta)\quad \mbox{for any }\beta\in (-\tau'(q_{max}),-\tau'(q_{min})).$$ It follows that $\tau(q)=\tilde\tau(q)$ for any $q\in(q_{min},q_{max})$. When $q_{min}$ and $q_{max}$ are finite, it is possible to prove that 
$$\tilde\tau(q)=\tau'(q_{min})q\quad\mbox{if }q\le q_{min}\quad\mbox{ and }\quad\tilde\tau(q)=\tau'(q_{max})q\quad\mbox{if }q\ge q_{max}$$
 (see for example \cite{BFP}).
\end{remark}

Let us finish this text by recalling that Barral proved in \cite{B00} the much more difficult result:
$$\mbox{almost surely,\quad}
\left\{
\begin{aligned}
&\mbox{for any }\beta\in(-\tau'(q_{max}),-\tau'(q_{min})),\quad &\dim(E_\beta)&=\tau^*(\beta)\\
\null\\
&\mbox{for any }\beta\not\in[-\tau'(q_{max}),-\tau'(q_{min})],\quad  &E_\beta&=\emptyset .
\end{aligned}
\right.$$
\begin{acknowledgement}
I would like to thank St\'ephane Jaffard and St\'ephane Seuret who offered me the opportunity to  deliver this course during the GDR meeting at Porquerolles Island (September 22-27,  2013)
\end{acknowledgement}

\end{document}